\theoremstyle{plain}
\newtheorem{thm}{Theorem}
\newtheorem{cor}{Corollary}
\newtheorem{lemma}{Lemma}
\newtheorem{lem}[lemma]{Lemma}
\newtheorem{prop}{Proposition}
\newtheorem{conj}{Conjecture}
\theoremstyle{definition}
\newtheorem{defn}{Definition}
\theoremstyle{remark}
\newcommand{\BC}{{\mathbb{C}}}
\newcommand{\BL}{{\mathbb{L}}}
\newcommand{\BP}{{\mathbb{P}}}
\newcommand{\BQ}{{\mathbb{Q}}}
\newcommand{\BZ}{{\mathbb{Z}}}
\newcommand{\CA}{{\mathcal A}}
\newcommand{\CE}{{\mathcal E}}
\newcommand{\CF}{{\mathcal F}}
\newcommand{\CI}{{\mathcal I}}
\newcommand{\CL}{{\mathcal L}}
\newcommand{\CM}{{\mathcal M}}
\newcommand{\CN}{{\mathcal N}}
\newcommand{\CO}{{\mathcal O}}
\newcommand{\CS}{{\mathcal S}}
\newcommand{\CU}{{\mathcal U}}
\newcommand{\CV}{{\mathcal V}}
\newcommand{\CX}{{\mathcal X}}
\newcommand{\CY}{{\mathcal Y}}
\newcommand{\CZ}{{\mathcal Z}}
\newcommand{\blangle}{\big\langle}
\DeclareMathOperator{\Hilb}{Hilb}
\DeclareFontFamily{OT1}{rsfs}{}
\DeclareFontShape{OT1}{rsfs}{n}{it}{<-> rsfs10}{}
\DeclareMathAlphabet{\curly}{OT1}{rsfs}{n}{it}
\newcommand\Spec{\operatorname{Spec}}
\newcommand*\dd{\mathop{}\!\mathrm{d}}
\newcommand{\Coh}{\mathrm{Coh}}
\newcommand{\Pic}{\mathop{\rm Pic}\nolimits}
\newcommand{\PT}{\mathsf{PT}}
\newcommand{\DT}{\mathsf{DT}}
\tikzset{snake it/.style={decorate, decoration=snake}}
\begin{document}
\title[Reduced Dondaldson-Thomas invariants]{
Reduced Donaldson--Thomas invariants
and the ring of dual numbers
}
\date{\today}

\author{Georg Oberdieck}
\address{MIT, Department of Mathematics}
\email{georgo@mit.edu}

\author{Junliang Shen}
\address{ETH Z\"urich, Department of Mathematics}
\email{junliang.shen@math.ethz.ch}

\begin{abstract}
Let $A$ be an abelian variety. We introduce
$A$-equivariant Grothendieck rings
and $A$-equivariant motivic Hall algebras,
and endow them with natural
integration maps to the ring of dual numbers.
The construction allows a systematic treatment
of reduced Donaldson--Thomas invariants
by Hall algebra techniques.
We calculate reduced Donaldson--Thomas invariants
for $\mathrm{K3} \times E$ and abelian threefolds
for several imprimitive curve classes.
This verifies (in special cases) multiple cover
formulas conjectured by
Oberdieck--Pandharipande and
Bryan--Oberdieck--Pandharipande--Yin.
\end{abstract}
\baselineskip=14.5pt
\maketitle

\setcounter{tocdepth}{1} 

\tableofcontents

\section{Introduction}
\subsection{Equivariant Hall algebras}
We present a framework
to apply techniques from motivic
Hall algebras and Grothendieck rings of varieties
in the presence of an action by an abelian variety.
The idea is to incorporate the action as \emph{additional data}
into the definition, making the Hall algebra
and the underlying Grothendieck groups equivariant.
The natural integration map by Euler characteristic is replaced
by an integration map to the ring of dual numbers:
\[ \BQ[\epsilon] / ( \epsilon^2 = 0 )\,. \]
Precisely, given a scheme $Z$ with an action by
a simple\footnote{An abelian variety is simple if all its proper subgroups are $0$-dimensional.} abelian variety $A$
we define the integration map by
\[ \CI(Z) = e(Z^A) + e( (Z - Z^A)/A ) \epsilon \]
where $Z^A$ is the fix locus of the action,
and $e( \cdot )$ is the topological Euler characteristic
taken here always in the orbifold sense.
This construction arises natural in applications.
For example, for a smooth
projective variety $X$ of dimension $d$
we have the identity in the Grothendieck ring of varieties
\begin{equation} \sum_{n = 0}^{\infty} \, [ \Hilb^n(X) ]\, q^n
\, = \,
\left( \sum_{n = 0}^{\infty} \, [ \Hilb^n( \BC^d )_0 ] \, q^n \right)^{[X]} \label{3513} \end{equation}
where $\Hilb^n(X)$ is the Hilbert scheme of points on $X$,
and $\Hilb^n( \BC^d )_0$ is the punctual Hilbert scheme
in the affine space $\BC^d$ \cite{GLM2}.
In case $X = Y \times A$ where $A$ is a simple abelian variety
acting on $X$ by translation in the second factor,
a straight-forward argument shows that \eqref{3513}
lifts to the $A$-equivariant Grothendieck ring.
Applying our integration map
we naturally obtain\footnote{See Section 3 for details of the proof.}
\begin{equation}
\begin{aligned} \label{eqn2}
1 + \epsilon \sum_{n=1}^{\infty}
e\left( \Hilb^n(X) / A \right) q^n
& = 
\left( \sum_{n = 0}^{\infty} e\big(\Hilb^n( \BC^d )_0\big) q^n \right)^{\CI(X)} \\
& = 
\left( \sum_{n = 0}^{\infty} P_d(n) q^n \right)^{\epsilon \cdot e(Y)} \\
& = 1 + \epsilon \cdot e(Y) \log\left( \sum_{n = 0}^{\infty} P_d(n) q^n \right)\,,
\end{aligned}
\end{equation}
where $P_d(n)$ is the number of $d$-dimensional partitions of $n$,
and we used the convention $f(q)^{\epsilon} = \exp( \log(f) \epsilon)$.
The left hand side is (up to a factor)
the generating series of Euler characteristics of
the generalized Kummer schemes of $Y \times A$,
and we recover a formula proven by
Shen \cite{S}, Morrison-Shen \cite{MS},
and Gulbrandsen-Ricolfi \cite{GR}.
In fact, the first order expansion in terms of $e(A)$
was the main motivation that led Gulbrandsen to conjecture \eqref{eqn2}
for abelian varieties in \cite{Gul}.
Our approach captures this intuition
and makes it mathematically rigorous.

\subsection{Reduced Donaldson--Thomas invariants}
Our main interest here lies in
applications to Donaldson--Thomas (DT) invariants\footnote{Donaldson--Thomas invariants are defined by integration over the Hilbert scheme
of curves in threefolds and virtually enumerate algebraic curves,
see \cite{PT3} for an introduction.}
for special Calabi--Yau threefolds $X$.
We are particularly interested in the following examples:
\begin{itemize}
\item[(1)] $X$ is an abelian threefold, or
\item[(2)] $X$ is the product of a K3 surface and an elliptic curve $E$.
\end{itemize}
In both cases an abelian variety
acts on the Hilbert schemes by translation
and
forces almost all ordinary DT invariants to vanish.
The definition of DT invariants needs to be modified
to be enumerative meaningful.

Let $A$ be an abelian variety which acts on a
Calabi--Yau threefold $X$.
Let $\Hilb^n(X, \beta)$ be the Hilbert scheme of $1$-dimensional
subschemes ${Z \subset X}$ satisfying
\[ [Z] = \beta \, \in H_2(X, \BZ), \ \quad \chi( \CO_Z ) = n \in \BZ \,. \]
If the induced $A$-action on $\Hilb^n(X, \beta)$
has finite stabilizers,
we define \emph{$A$-reduced Donaldson--Thomas invariants} of $X$ by
\begin{equation} \label{eqn3}
\DT^{X, A\text{-red}}_{n, \beta}
\, = \, \int_{\Hilb^n(X, \beta)/A} \nu \dd{e}
\, = \, \sum_{k \in \BZ} e\left( \nu^{-1}(k) \right) \,,
\end{equation}
where $\nu : \Hilb^n(X, \beta)/A \to \BZ$
is Behrend's constructible function \cite{B}.

For abelian threefolds (acting on itself by translation)
the definition was introduced by Gulbrandsen in \cite{Gul},
where he also showed deformation invariance in many cases.
For $\mathrm{K3} \times E$ 
the definition is by Bryan \cite{Bryan-K3xE} and
deformation invariance is proven in \cite{O1}.
In both cases
explicit conjectural formulas for the
reduced DT invariants
are known in all
curve classes
\cite{K3xE, BOPY}.
The formulas reveal
(at least conjecturally and as far as numbers go)
rich structures underlying
the enumerative geometry of algebraic curves.

In Section~\ref{Section_Equivariant_motivic_Hall_algebras}
we introduce $A$-equivariant
versions of Joyce's motivic Hall algebra
and equip them with integration maps defined over the ring of dual numbers.
This structure is tailored
to deal with reduced DT invariants systematically.
This leads to new calculations
in several interesting cases,
and to DT/PT correspondences in previous unknown cases.

\subsection{Reduced DT invariants for $K3\times E$}
Let $S$ be a non-singular projective K3 surface
and let $E$ be a non-singular elliptic curve. We consider the
product Calabi--Yau
\[ X = S \times E \]
on which $E$ acts by translation in the second factor.
Using the K\"unneth decomposition we identify
\[ H_2(X, \BZ) = H_2(S, \BZ) \oplus H_2(E, \BZ)
= H_2(S, \BZ) \oplus \BZ \,. \]
The conjectural form of the reduced DT invariants of
$X$ is reviewed in Section \ref{Section_Multiple_cover_formulas};
here we prove the following special case.
Define coefficients $\mathsf{m}(d,n)$ by the expansion
\[ \sum_{d = 0}^{\infty} \sum_{n \in \BZ} \mathsf{m}(d,n) p^n t^{d}
=
-\frac{24 \wp(p,t)}{\prod_{m \geq 1}(1-t^m)^{24}}
\]
where $\wp$ is the Weierstra{\ss} elliptic function,
\begin{equation} \label{Weierstrass}
\wp(p,t)
= \frac{1}{12} + \frac{p}{(1-p)^2}
+ \sum_{d = 1}^{\infty} \sum_{k|d} k (p^k - 2 + p^{-k}) t^{d} \,.
\end{equation}

\begin{thm} \label{K3xE_thm} For all $d \geq 0$ we have
\begin{equation*}
\exp\left( \sum_{n = 1}^{\infty} \DT^{X, E\textup{-red}}_{n, (0,d)} (-p)^n \right)
=
\prod_{\ell = 1}^{\infty}
\left( \frac{1}{1-p^{\ell}} \right)^{\mathsf{m}(d,\ell)} \,.
\end{equation*}
\end{thm}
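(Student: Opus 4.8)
The plan is to compute the full generating series $\sum_{d\geq 0}\sum_{n}[\Hilb^n(X,(0,d))]\,q^n t^d$ inside the $E$-equivariant motivic Hall algebra of $X=S\times E$ and to read off the reduced invariants by applying the dual-numbers integration map $\CI$ in its Behrend-weighted form: $\DT^{X,E\textup{-red}}_{n,(0,d)}$ is the coefficient of $\epsilon\,q^n t^d$ in $\CI$ of this series. The first point is to analyse the $E$-action. The class constraint forces any curve in class $(0,d)$ to map to a finite subset of $S$, and since every global function on $E$ is constant, every pure one-dimensional vertical subscheme of $X$ in class $(0,d)$ equals $W\times E$ for a unique $W\in\Hilb^d(S)$. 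Hence for $n\neq 0$ we have $\Hilb^n(X,(0,d))^E=\varnothing$ — so the $E$-action has finite stabilizers and \eqref{eqn3} applies — while for $n=0$ the whole space $\Hilb^0(X,(0,d))=\Hilb^d(S)$ is $E$-fixed. Therefore the $\epsilon^0$-part of $\CI$ of our series is $\sum_d e(\Hilb^d S)\,t^d=\prod_{m\geq 1}(1-t^m)^{-24}$ by G\"ottsche's formula, and the $\epsilon^1$-part is exactly $\sum_{d\geq 0}\sum_{n\geq 1}\DT^{X,E\textup{-red}}_{n,(0,d)}\,q^n t^d$, which is what must be determined.

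Second, I would run the DT/PT correspondence inside the $E$-equivariant Hall algebra by Bridgeland--Toda-type wall-crossing, now over $\BQ[\epsilon]$. This factors the Hilbert series as the degree-zero series times the vertical stable-pairs series, with degree-zero factor $M(-q)^{\CI(X)}$ for the MacMahon function $M$. Since $E$ acts trivially on $S$ and by translation on itself, $\CI(X)=\CI(S)\cdot\CI(E)=24\cdot(-\epsilon)=-24\epsilon$ — the sign $(-1)^{\dim E}$ coming from the $K_E$-twist of the symmetric obstruction theory when forming the quotient by $E$ — so the degree-zero factor is $1-24\epsilon\log M(-q)$. The theorem is thereby reduced to computing the reduced vertical stable-pairs invariants together with a plethystic rearrangement.

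Third, I would compute the reduced vertical PT side. The product formula over $S$ (the $E$-equivariant lift of \eqref{3513}) writes the vertical PT series as $\CZ_{\mathrm{loc}}(q,t)^{[S]}$, where $\CZ_{\mathrm{loc}}$ records stable pairs on $\BC^2\times E$ supported on $\{0\}\times E$; raising to $[S]$ with $\CI(S)=24$ reproduces the G\"ottsche factor $\prod_m(1-t^m)^{-24}$, and it remains to identify $\CI(\CZ_{\mathrm{loc}})$. Here the support is again rigid, equal to $W\times E$, but the line bundle data along $E$ genuinely varies: a stable pair on the curve $E$ with cokernel of length $\ell$ is an effective divisor of degree $\ell$, so the relevant local models are the symmetric powers $\mathrm{Sym}^\ell(E)$, on which $E$ acts through translation with stabilizer the finite group $E[\ell]$; the orbifold quotient $\BP^{\ell-1}/E[\ell]$ has orbifold Euler characteristic $e(\BP^{\ell-1})/\ell=1$, and combining this with the Behrend weight $(-1)^\ell$, the sign $(-1)^{\dim E}$ and the sum over $\ell$ with its Adams (multiple-cover) contributions produces precisely the coefficients of the Weierstra\ss{} expansion \eqref{Weierstrass}. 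Concretely, the Hall-algebra computation delivers $\sum_{n,d}\DT^{X,E\textup{-red}}_{n,(0,d)}(-p)^n t^d$ in plethystic form $\sum_{j\geq 1}\tfrac1j\,\Phi(p^j,t)$; identifying $\Phi(p,t)=-24\,\wp(p,t)/\prod_m(1-t^m)^{24}$ — the degree-zero term matched via $\log M(p)=\sum_{j\geq 1}\tfrac1j\big(\wp(p^j,0)-\tfrac1{12}\big)$ and the remaining terms by the PT computation, only the $p$-positive part entering — and reading off coefficients in $t^d$ gives $\sum_{n\geq 1}\DT^{X,E\textup{-red}}_{n,(0,d)}(-p)^n=-\sum_{\ell\geq 1}\mathsf{m}(d,\ell)\log(1-p^\ell)$, which is the asserted product formula after exponentiation.

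The main obstacle is the third step: carrying out the reduced vertical stable-pairs computation on $\BC^2\times E$ rigorously. Pure supports $W\times E$ with $W$ a fat point of $S$ force the pair sheaf $F$ to be a genuinely non-split module over $\CO_W\otimes\CO_E$, so the naive ``one line bundle per branch'' picture is incomplete, and one must simultaneously track the Behrend function through the non-free translation quotient and match the outcome with $\wp$; a toric/localization computation in the style of Bryan, or a degeneration of $\BC^2\times E$ to $\BC^2\times\BP^1$ relative to two fibers, seems the natural route. The secondary technical point — that the DT/PT correspondence and the product structure over $S$ both lift to the $E$-equivariant Hall algebra compatibly with the integration map $\CI$ to $\BQ[\epsilon]$ — is expected from the formalism of Section~\ref{Section_Equivariant_motivic_Hall_algebras}, but needs to be verified in this setting.
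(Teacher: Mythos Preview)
Your route is genuinely different from the paper's, and the obstacle you flag in your third step is precisely where the gap lies. You try to compute the reduced \emph{PT} side directly via a local vertex-type model on $\BC^2\times E$, but you yourself note that for thickened supports $W\times E$ the stable-pair sheaf is a non-split $\CO_W\otimes\CO_E$-module, so the ``symmetric powers of $E$'' heuristic does not cover the general case; there is no argument given for why the Behrend-weighted count on the quotient still collapses to the Weierstra{\ss} coefficients. The paper's own ``Relation to other work'' section singles out exactly this difficulty: incorporating the Behrend weights into a stratification/vertex computation is the hard part, and neither your sketch nor the references you allude to close that gap.

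The paper avoids this entirely. Instead of going through PT, it applies Toda's wall-crossing structure theorem in the $E$-equivariant Hall algebra (the paper's Theorem~\ref{reduced_Toda_equation}), which writes
\[
\sum_{n,\beta}\DT^{\mathrm{red}}_{n,\beta}q^n t^\beta
=\Big(\sum_{n>0,\beta}(-1)^{n-1}n\,N^{\mathrm{red}}_{n,\beta}q^n t^\beta\Big)\cdot\Big(\sum_{n,\beta}L_{n,\beta}q^n t^\beta\Big)+\sum_{n,\beta}L^{\mathrm{red}}_{n,\beta}q^n t^\beta.
\]
Restricting to classes $(0,d)$, the paper then computes all three ingredients. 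Your own observation that $\Hilb^n(X,(0,d))$ has no $E$-fixed points for $n\neq 0$ and equals $\Hilb^d(S)$ for $n=0$ immediately gives $L^{\mathrm{red}}_{n,(0,d)}=0$ and $\sum L_{n,(0,d)}q^n t^d=\prod_m(1-t^m)^{-24}$ (Proposition~\ref{vanish2}). The substantive input is the computation of the reduced $N$-invariants of the moduli of $\mu$-semistable $1$-dimensional sheaves: by Atiyah's classification on the elliptic fibre and a fibrewise Fourier--Mukai equivalence, there is an $E$-equivariant isomorphism $\CM_{n,(0,d)}\cong\CM_{k,(0,0)}$ with $k=\gcd(n,d)$, the $E$-action on the right being $n/k$ times translation; this yields $N^{\mathrm{red}}_{n,(0,d)}=\tfrac{24}{n^2}\sum_{\ell\mid k}\ell^2$ (Proposition~\ref{Prop_N}), which plugged into the wall-crossing formula gives the theorem after a routine rearrangement. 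The point is that working with the $N$-invariants replaces your intractable local PT computation on thickened curves by a clean moduli-of-sheaves statement where the classical theory of bundles on an elliptic curve does all the work.
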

\vspace{3pt}

Theorem~\ref{K3xE_thm}
determines all reduced
invariants in classes $(0,d)$.\footnote{
The Hilbert scheme $\Hilb^n(X,(0,d))$ is empty
for $n<0$ and $E$-invariant for $n=0$.}
As in the case of the Hilbert scheme of points
we need to exponentiate
the generating series of reduced DT invariants
to obtain product expansions.
The case $d = 0$ of Theorem~\ref{K3xE_thm}
recovers the calculation of reduced degree~$0$ DT invariants
of \cite{S, MS}. For $d > 0$ the results give
a new and non-trivial check in imprimitive classes
for the general multiple cover formula conjectured in \cite{K3xE}.
Explicitly, taking the logarithm in the theorem yields the closed formula
\[ \DT^{X, E\text{-red}}_{n, (0,d)}
= (-1)^n \sum_{k | (n,d)} \frac{1}{k} \mathsf{m}\left(d, \frac{n}{k} \right) \,. \]

\subsection{Reduced DT invariants for abelian threefolds}
Let $A$ be a projective abelian threefold
acting on itself by translation.
If $n \neq 0$ by deformation invariance
the $A$-reduced DT
invariants
depend only on the \emph{type}\footnote{The type
is obtained from the standard divisor theory of the dual
abelian variety \cite{BOPY}.}
\[ (d_1, d_2, d_3),\ d_i \geq 0 \]
of the curve class $\beta$. We write
\[
\DT^{\text{red}}_{n, (d_1, d_2, d_3)} = \DT^{A, A\text{-red}}_{n, \beta} \,. \]
We restrict here to the degenerate case where
$\beta$ has type $(0,0,d)$.
If $n < 0$ the Hilbert scheme is empty
and all reduced invariants vanish.
For $n = 0$ $A$-reduced invariants are not defined.
For $n > 0$ we have the following result.

\begin{thm} \label{Thm1}
For all $d\geq 0$ and $n>0$ we have
\[
\DT^{\mathrm{red}}_{n,(0,0,d)}
= \frac{(-1)^{n-1}}{n} \sum_{k|\mathrm{gcd}(n,d)} k^2.
\]
\end{thm}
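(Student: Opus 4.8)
The plan is to run, in the $A$-equivariant motivic Hall algebra of $A$, the same mechanism that produces formula \eqref{eqn2}, now with the Behrend-weighted refinement of the integration map $\CI$ to $\BQ[\epsilon]/(\epsilon^2)$. By the deformation invariance of $\DT^{\mathrm{red}}$ within classes of a fixed type we may replace $A$ by a product $B\times E$, with $B$ an abelian surface and $E$ an elliptic curve, acting on itself by translation, so that $\beta$ of type $(0,0,d)$ becomes $d$ times the class of a fibre of $\pi\colon A\to B$. Every one-dimensional subscheme $Z$ with $[Z]=\beta$ is then supported on finitely many fibres, and after removing embedded points it is a Cohen--Macaulay curve of class $d[E]$ — a union of at most $d$ fibres, possibly carrying infinitesimal ``ribbon'' thickenings transverse to the fibration. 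I would feed the resulting classes of $\Hilb^\ast(A,\beta)$ into the equivariant integration map and use the Hall-algebra identity underlying the DT/PT correspondence to trade the Hilbert scheme for the stable-pairs spaces; since $[Z]\neq 0$ the $A$-fixed loci of both moduli spaces are empty, so the $\epsilon^0$-parts vanish and the correcting degree-zero factor drops out, giving an equality $\DT^{\mathrm{red}}_{n,(0,0,d)}=\PT^{\mathrm{red}}_{n,(0,0,d)}$ (itself one of the new DT/PT correspondences).

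Next I would exploit the decomposition of the fibre-supported stable-pairs element over its fibrewise support: it is an exponential of ``connected'' contributions, a connected piece being a Cohen--Macaulay curve of class $k[E]$ concentrated near a single fibre, together with points, considered up to the residual $E$-translation. Applying $\CI$ turns this factorisation into a product in $\BQ[\epsilon]/(\epsilon^2)$, and extracting the $\epsilon$-coefficient yields, exactly as for the Hilbert scheme of points in \eqref{eqn2},
\[
\exp\Big(\textstyle\sum_{n\ge 1}\DT^{\mathrm{red}}_{n,(0,0,d)}\,q^n\Big)\;=\;\prod_{k} Z_k(q)^{\,c_k},
\]
where $Z_k(q)$ is the $\epsilon$-part series of a connected piece of fibre-class $k$ and $c_k$ its non-equivariant count. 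It therefore remains to evaluate the connected contributions. For $k=1$ the connected moduli is the relative symmetric power $\mathcal E^{(n)}\to B$; quotienting by $B$-translation identifies it with a single fibre $E^{(n)}$, and the residual $E$-action — which covers multiplication by $n$ on the base of the sum map $E^{(n)}\to E$ — exhibits the quotient as the stack $[\BP^{n-1}/E[n]]$, of stacky Euler characteristic $n/n^2=1/n$ and Behrend function $(-1)^{n-1}$. This gives $\DT^{\mathrm{red}}_{n,(0,0,1)}=(-1)^{n-1}/n$, i.e.\ $Z_1=1+q$, $c_1=1$, recovering the $d=1$ case.

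For $k\ge 2$ one must analyse the non-reduced (ribbon) Cohen--Macaulay curves of class $k[E]$ inside $B\times E$ together with points, pass to the $E$-quotient, and weight by the Behrend function. The outcome to be proved is that the surviving connected contributions are precisely the $[k]$-divisible ones — those invariant under the order-$k^2$ torsion subgroup $E[k]\subset E$ — which in particular restricts $k$ to divide $d$, and that each such piece carries the torsion factor $k^2$ together with the sign $(-1)^k$ from its cluster dimension, so that $Z_k=1-(-1)^kq^k$ with exponent $c_k=k$. Substituting then gives
\[
\exp\Big(\textstyle\sum_{n\ge 1}\DT^{\mathrm{red}}_{n,(0,0,d)}\,q^n\Big)\;=\;\prod_{k\mid d}\big(1-(-1)^kq^k\big)^{k},
\]
whose logarithm is exactly the closed formula $\DT^{\mathrm{red}}_{n,(0,0,d)}=\tfrac{(-1)^{n-1}}{n}\sum_{k\mid\gcd(n,d)}k^2$; the case $d=0$ specialises to the reduced degree-zero invariants of Shen, Morrison--Shen and Gulbrandsen--Ricolfi already recovered in the spirit of \eqref{eqn2}.

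The main obstacle is the $k\ge 2$ analysis: controlling the local deformation theory of ribbon-type Cohen--Macaulay curves near a fibre of $B\times E$, stratifying the corresponding stable-pairs spaces, and establishing the precise cancellations in the Behrend-weighted equivariant integral that isolate the $E[k]$-symmetric contributions and so force both the divisibility $k\mid d$ and the arithmetic factor $\sum_{k\mid\gcd(n,d)}k^2$. This is exactly the point at which the integration map to $\BQ[\epsilon]/(\epsilon^2)$ does the bookkeeping — its $\epsilon$-coefficient isolating $e\big((\,\cdot\,-(\,\cdot\,)^A)/A\big)$ — and where the torsion of $E$ enters. Once the connected contributions are in hand, comparison with the Bryan--Oberdieck--Pandharipande--Yin conjecture in type $(0,0,d)$ is a direct check.
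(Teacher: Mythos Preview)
Your proposal diverges from the paper's proof in two substantive ways, and contains one genuine gap.

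\textbf{The framework.} You invoke the integration map to $\BQ[\epsilon]/(\epsilon^2)$, but that map is only constructed (and only a Poisson homomorphism) for a \emph{simple} abelian variety. Once you deform to $A=B\times E$, $A$ is not simple, and the Hall-algebra classes you manipulate contain sheaves with stabiliser $E$ (for instance any sheaf pulled back from $B$), which is neither finite nor all of $A$. The single-$\epsilon$ map then fails to be multiplicative: take $V_1=B$ with $A$ acting through $B$ and $V_2=E$ with $A$ acting through $E$; neither has $A$-fixed points, yet $(V_1\times V_2)/A$ is a point, so $\CI(V_1)\CI(V_2)\neq\CI(V_1\times V_2)$. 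The paper handles this by refining the canonical decomposition according to the four possible stabiliser types in $B\times E$ and integrating to $\BQ[\epsilon_1,\epsilon_2]/(\epsilon_1^2,\epsilon_2^2)$; the $A$-reduced invariants then sit in the $\epsilon_1\epsilon_2$-coefficient, and the vanishing you want (e.g.\ of the degree-zero correction in DT/PT) comes from $\epsilon_1^2=0$ combined with the vanishing of the ordinary $\PT_{n,(0,d)}$, not from emptiness of the $A$-fixed locus alone.

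\textbf{The method, and the gap.} Your plan is a direct geometric attack on the stable-pairs spaces: stratify by fibre support, separate connected contributions $Z_k$, and analyse the ribbon curves for $k\ge 2$. You correctly identify this last step as the main obstacle, and you do not carry it out; the claimed identifications $Z_k=1-(-1)^kq^k$, $c_k=k$ are targets, not results. The paper bypasses this analysis entirely. It applies the equivariant wall-crossing formula (the analogue of Toda's structure theorem) to reduce $\DT^{A\text{-red}}_{n,(0,0,d)}$ to the reduced generalised invariants $N^{A\text{-red}}_{n,(0,d)}$, via
\[
\DT^{A\text{-red}}_{n,(0,d)} = (-1)^{n-1}\,n\,N^{A\text{-red}}_{n,(0,d)},
\]
and then computes $N^{A\text{-red}}_{n,(0,d)}$ by a Fourier--Mukai argument: Atiyah's classification of semistable sheaves on $E$ produces an isomorphism $\CM_{n,(0,d)}\cong\CM_{k,(0,0)}$, $k=\gcd(n,d)$, equivariant for $n/k$ times the $E$-translation, so that $N^{A\text{-red}}_{n,(0,d)}=(k/n)^2 N^{A\text{-red}}_{k,0}$. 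The degree-zero case is the Kummer-scheme computation. No ribbon analysis, no stratification of $P_n$, no connected-component bookkeeping is needed. Your $d=1$ calculation is correct and pleasant, but the paper's route shows that the arithmetic factor $\sum_{k\mid\gcd(n,d)}k^2$ is already forced at the level of moduli of semistable sheaves, not at the level of stable pairs.
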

In case $d=0$ the above formula
specializes to the degree 0 reduced DT invariants
which were conjectured in \cite{Gul} and proven in \cite{S, MS, GR}
using generalized Kummer schemes.
If $d > 0$ we obtain agreement
with the multiple cover formulas of \cite{BOPY},
compare Section~\ref{Section_Multiple_cover_formulas}.

\subsection{Reduced DT/PT correspondence}
A stable pair on a threefold $X$ is the datum $(\CF,s)$ of a
pure $1$-dimensional sheaf $\CF$ and a section $s \in H^0(\CF)$
with $0$-dimensional cokernel. Following \cite{PT1} we let
$P_n(X ,\beta)$ be the moduli space of stable pairs
with numerical invariants
\[
[\mathrm{Supp}(\CF)]=\beta \in H_2(X,\BZ),
\ \quad \chi(\CF)=n \in \BZ \,. \]
Let $A$ be an abelian variety which acts
on a Calabi--Yau threefold $X$.
If the induced action on $P_n(X,\beta)$
has finite stabilizers,
we define \emph{$A$-reduced Pandharipande-Thomas (PT) invariants} by
\[
\PT^{X, A\text{-red}}_{n,\beta}
= \int_{P_n(X,\beta)/A} \nu \dd{e},
\]
where $\nu : P_n(X, \beta)/A \to \BZ$ is the Behrend function.

The relationship between usual 
DT and PT invariants of Calabi--Yau 3-folds
has been well understood
via wall-crossing \cite{T10,Br1,T16}.
For abelian threefolds $A$
we prove in Section~\ref{Subsection_Abelianthreefold_DTPT}
the following simple correspondence:
\[
\DT^{A, A\text{-red}}_{n,\beta}
= \PT^{A, A\text{-red}}_{n,\beta}
\]
for all $n, \beta$ where $A$-reduced invariants are defined.

For $E$-reduced invariants of $\mathrm{K3} \times E$ the DT/PT correspondence
takes a more interesting form.
Define generating series of reduced invariants:
\[
\DT^{\text{red}}_d(q,t) =
\sum_{n, \gamma} \DT^{X,E\text{-red}}_{n,(\gamma,d)} q^n t^{\gamma},
\quad
\PT^{\text{red}}_d(q,t)
= \sum_{n, \gamma} \PT^{X,E\text{-red}}_{n, (\gamma,d)} q^n t^{\gamma}
\]
where the sums run over all $n \in \BZ$
and all curve classes $\gamma \in H_2(S,\BZ)$
with $(n,\gamma) \neq 0$.
Let also
\[ M(q) = \prod_{m = 1}^{\infty} (1-q^n)^{-n} \]
be the MacMahon function, and
define coefficients $a_d$ by
\[ \sum_{d = 0}^{\infty} a_d t^d =
\prod_{m = 1}^{\infty} (1-t^m)^{-24} \,. \]
By a result of G\"ottsche \cite{Goe} we have $a_d = e(\Hilb^d(S))$.

\begin{thm} \label{Thm_DTPT} For all $d \geq 0$,
\[ \exp \left( \DT^{\mathrm{red}}_d(q,t) \right)
\, = \, 
M(-q)^{-24 a_d} \cdot \exp\left( \PT^{\mathrm{red}}_d(q,t) \right) \,.
\]
\end{thm}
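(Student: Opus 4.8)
The plan is to carry out the Bridgeland--Toda DT/PT wall-crossing inside the $E$-equivariant motivic Hall algebra of $X = S\times E$ constructed in Section~\ref{Section_Equivariant_motivic_Hall_algebras}, and then apply the integration map $\CI$ to $\BQ[\epsilon]/(\epsilon^2)$. Every stack entering the standard wall-crossing --- the stacks of ideal sheaves of curves, of stable pairs, of $0$-dimensional quotients, and the semistable loci at the wall --- carries a compatible $E$-action by translation in the elliptic factor. Hence the Hall-algebra identity underlying the DT/PT correspondence, which upon integration reads $Z_{\DT}(X) = Z_0(X)\cdot Z_{\PT}(X)$ with $Z_0(X) = \sum_{n}[\Hilb^n(X)]\,q^n$ the series of $0$-dimensional quotients (see \cite{T10,Br1,T16}), lifts verbatim to the $E$-equivariant setting, together with its Behrend-weighted refinement. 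Since the equivariant integration map is a ring homomorphism on the relevant semiclassical quotient of the Hall algebra (Section~\ref{Section_Equivariant_motivic_Hall_algebras}), we get
\[
\CI\big(Z_{\DT}(X)\big) = \CI\big(Z_0(X)\big)\cdot \CI\big(Z_{\PT}(X)\big),
\]
an identity of power series in $q$, in $t$ (tracking $\gamma\in H_2(S,\BZ)$), and in $t_E$ (tracking the $E$-degree $d$), with coefficients in $\BQ[\epsilon]/(\epsilon^2)$.

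Next I would evaluate the three factors. Since no nonempty finite subscheme of $X$ is $E$-invariant, $\Hilb^n(X)^E = \emptyset$ for $n\geq 1$, so $\CI(Z_0(X)) = 1 + \epsilon\sum_{n\geq 1}\DT^{X,E\text{-red}}_{n,(0,0)}\,q^n$, which by the $d=0$ case of Theorem~\ref{K3xE_thm} equals $M(-q)^{-24\epsilon} = 1 - 24\epsilon\log M(-q)$. For $Z_{\DT}(X)$ and $Z_{\PT}(X)$ the key point is the structure of the $E$-fixed loci: an $E$-invariant one-dimensional subscheme of $X$ is a cylinder $W\times E = \pi_S^{-1}(W)$ over a finite subscheme $W\subset S$, and the only $E$-fixed stable pair supported on such a cylinder is $(\CO_{W\times E},\mathrm{can})$; such objects have class $(0,\mathrm{length}\,W)$ and Euler characteristic $0$. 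A tangent-space computation shows moreover that $\Hilb^0(X,(0,d))$ and $P_0(X,(0,d))$ are entirely $E$-fixed and canonically isomorphic to $\Hilb^d(S)$, smooth of dimension $2d$, on which the Behrend function of the ambient Calabi--Yau moduli problem equals $(-1)^{2d}=1$. Hence $\Hilb^n(X,(\gamma,d))^E$ and $P_n(X,(\gamma,d))^E$ are empty unless $(n,\gamma)=(0,0)$, where each equals $\Hilb^d(S)$; using $e(\Hilb^d(S))=a_d$ \cite{Goe}, the definition of the reduced invariants gives
\[
\CI\big(Z_{\DT}(X)\big) = \sum_{d\geq 0}a_d\,t_E^d + \epsilon\sum_{d\geq 0}t_E^d\,\DT^{\text{red}}_d(q,t), \qquad \CI\big(Z_{\PT}(X)\big) = \sum_{d\geq 0}a_d\,t_E^d + \epsilon\sum_{d\geq 0}t_E^d\,\PT^{\text{red}}_d(q,t).
\]
(In particular the classical part of the reduced DT partition function is the G\"ottsche series $\sum_d a_d t_E^d = \prod_m(1-t_E^m)^{-24}$.)

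To finish, substitute these expansions into the wall-crossing identity and use $\epsilon^2=0$. The classical ($\epsilon^0$) parts agree automatically, and comparing $\epsilon^1$-parts gives
\[
\sum_d t_E^d\,\DT^{\text{red}}_d(q,t) = \sum_d t_E^d\,\PT^{\text{red}}_d(q,t) - 24\log M(-q)\cdot\sum_d a_d\,t_E^d.
\]
Extracting the coefficient of $t_E^d$ yields $\DT^{\text{red}}_d(q,t) = \PT^{\text{red}}_d(q,t) - 24a_d\log M(-q)$, and exponentiating gives the theorem since $\exp(-24a_d\log M(-q)) = M(-q)^{-24a_d}$.

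The main obstacle is the first step: making the $E$-equivariant DT/PT wall-crossing rigorous. This needs the $E$-equivariant motivic Hall algebra of Section~\ref{Section_Equivariant_motivic_Hall_algebras} to have the integrability (``no pole'') properties that make $\CI$ a Poisson, hence ring, homomorphism; the $E$-action to extend to the auxiliary stacks of pairs and to the semistable loci used by Bridgeland and Toda, with finite stabilizers away from the vertical cylinders; and the Behrend-function identities to survive equivariantly. Granting the framework of Section~\ref{Section_Equivariant_motivic_Hall_algebras}, the one genuinely new ingredient is the elementary identification of the $E$-fixed loci with $\Hilb^d(S)$ --- precisely what upgrades the naive degree-$0$ correction $M(-q)^{-24\epsilon}$ into the class-dependent factor $M(-q)^{-24a_d}$.
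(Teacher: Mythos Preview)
Your approach is correct and is essentially the paper's own argument, just packaged slightly differently. The paper lifts the Hall algebra identity of \cite[Lem.~3.16]{T16} to the $E$-equivariant Hall algebra and applies $\CI$; the resulting difference $\DT^{\mathrm{red}}-\PT^{\mathrm{red}}$ is the product $\big(\sum_{n>0}(-1)^{n-1}nN^{\mathrm{red}}_{n,0}q^n\big)\cdot\big(\sum_{n,\beta}L_{n,\beta}q^nt^\beta\big)$, and Propositions~\ref{Prop_N} and~\ref{vanish2} identify these two factors with $-24\log M(-q)$ and $\prod_m(1-t^m)^{-24}=\sum_d a_d t^d$. You arrive at exactly the same product, but obtain the second factor by directly identifying the $E$-fixed loci of $\Hilb^0(X,(0,d))$ and $P_0(X,(0,d))$ with $\Hilb^d(S)$, and the first factor from the degree-$0$ reduced invariants (Corollary~\ref{DT_degree0}); this bypasses the $N$/$L$-invariant bookkeeping but uses the identical geometric input (indeed Proposition~\ref{vanish2} is proved via that same fixed-locus identification, citing \cite[Prop.~6.8]{T12}).
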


If $\gamma \neq 0$ then
we recover the result of \cite{O1},
\[
\DT^{X, E\text{-red}}_{n,(\gamma,d)}
= \PT_{n,(\gamma,d)}^{X, E\text{-red}} \,,
\]
while for $\gamma = 0$ the correspondence
(Theorem~\ref{Thm_DTPT}) is new and non-trivial.

\subsection{Relation to other work}
(1) The motive of the generalized Kummer schemes
were computed in \cite{MS}
using Grothendieck rings relative to an abelian monoid.
It would be interesting to
compare this to the motivic class
(in the Grothendieck ring of stacks)
of the stack quotient
$\Hilb^n(X) / A$.

\noindent (2) The topological vertex method
of \cite{BrKo,Bryan-K3xE} may yield another
approach to Theorems \ref{K3xE_thm} and \ref{Thm1}.
The method proceeds by stratification and
computation of the local invariants.
While in principle this method
is able to compute the Euler characteristic
of the corresponding Hilbert scheme,
the difficulty here is
to incorporate also the correct Behrend function
weights into the computation for DT invariants.

\subsection{Plan of the paper}
In Section~\ref{Section_Multiple_cover_formulas}
we recall the general multiple cover
formulas for abelian threefolds and $\mathrm{K3} \times E$
as conjectured in \cite{K3xE, BOPY}.
We also comment on the relationship of Theorem~\ref{K3xE_thm}
to Gromov-Witten theory.
In Section~\ref{Section_Equivariant_Grothendieck_rings}
as warmup for the general case
we introduce an equivariant Grothendieck ring of varieties and
prove the degree $0$ cases of Theorems \ref{K3xE_thm} and \ref{Thm1}.
In Section~4 we introduce the equivariant motivic Hall algebra,
which we apply in Section~\ref{Section_Reduced_DT_for_K3xE}
to prove the main theorems following a strategy of Y.~Toda \cite{T12}.
In Section~6 we treat the parallel case of abelian threefolds.

\subsection{Conventions}
We always work over the complex numbers $\BC$.
All schemes are of finite type, and by definition
a variety is a reduced, separated scheme of finite type.
A Calabi--Yau threefold is a nonsingular
projective threefold $X$ with trivial canonical class
$K_X \simeq \CO_X$.
In particular the vanishing of $H^1(X, \CO_X)$
is not required.
By the recent work \cite{PTVV, T16}
the results of \cite[Sec.5]{Br2} also hold in this
more general setting, compare \cite[4.6]{O1}.


\subsection{Acknowledgements}
The paper was started
when J.~ S. was visiting
MIT in September 2016.
We would like to thank Jim Bryan,
Andrew Kresch, Davesh Maulik,
Rahul~Pandharipande, Johannes Schmitt, and Qizheng Yin
for their interest and useful discussions.

J.~ S. was supported by grant ERC-2012-AdG-320368-MCSK in the group of Rahul Pandharipande at ETH Z\"{u}rich.

\section{Multiple cover formulas}
\label{Section_Multiple_cover_formulas}
\subsection{Overview}
We review here the conjectural formulas
for reduced DT invariants of
$\mathrm{K3} \times E$ by \cite{K3xE}
and abelian threefolds by \cite{BOPY}.

\subsection{$K3 \times E$}
Let $X = S \times E$ be the product
of a non-singular projective
K3 surface $S$ and an elliptic curve $E$,
on which $E$ acts by translation in the second factor.
The $E$-reduced DT invariants of $X$ are denoted by
\[ \DT^{\mathrm{red}}_{n, (\beta,d)} = \DT^{X, E\text{-red}}_{n,(\beta,d)} \]
where $\beta \in H_2(S, \BZ)$ is a (possibly zero) curve class,
$d \geq 0$ and $n \in \BZ$.
Since we require the translation action
on the Hilbert scheme
to have finite stabilizers
we will always require 
\[ \beta \neq 0 \ \ \text{or} \ \ n \neq 0 \,. \]

Define coefficients $c(m)$ by the expansion
\[
\sum_{d \geq 0} \sum_{k \in \BZ} c(4d-k^2) p^k t^d
=
24 \phi_{-2,1}(p,t) \wp(p,t)
\]
where $\phi_{-2,1}$ is
the unique weak Jacobi form of index $1$ and weight $-2$,
\begin{equation} \label{phim21}
\phi_{-2,1}(p,t)
=
(p - 2 + p^{-1})
\prod_{m \geq 1} \frac{ (1-pt^m)^2 (1-p^{-1}t^m)^2}{ (1-t^m)^4 }
\end{equation}
and $\wp$ is the Weierstra{\ss} elliptic function \eqref{Weierstrass}.
The weight $10$ Igusa cusp form is defined by the product expansion
\[ \chi_{10}(p,t, \tilde{t}) \ = \ p \, t \, \tilde{t} \prod_{k,h,d} (1-p^k t^h \tilde{t}^d)^{c(4hd-k^2)}
\]
where the product is over all
$k \in \BZ$ and $h,d \geq 0$ such that
\begin{enumerate}
\item[$\bullet$]
 $h>0$ or $d>0$,
\item[$\bullet$]
  $h = d = 0$ and $k < 0$ \,.
\end{enumerate}
We define coefficients $\mathsf{m}(h,d,n)$
by the expansion of the reciprocal
of the Igusa cusp form
in the region $0 < |t| < |p| < 1$,
\[
\sum_{h = 0}^{\infty} \sum_{d = 0}^{\infty} \sum_{n \in \BZ}
\mathsf{m}(h,d,n) p^n t^{h-1} \tilde{t}^{d-1}
\ = \ 
\frac{-1}{\chi_{10}(p, t, \tilde{t})} \,.
\]
The coefficients $\mathsf{m}(h,d,n)$ are related
to $\mathsf{m}(d,n)$ introduced before by
\[
\mathsf{m}(d,n) = \mathsf{m}(1,d,n) \,.
\]

The following conjecture was proposed in \cite{K3xE}.
\begin{conj}[\cite{K3xE}] \label{K3xE_conj}
For all $n, \beta,d$ satisfying $\beta \neq 0$ or $n \neq 0$,
we have
\begin{equation} \label{K3xE_MC}
(-1)^n \DT^{\mathrm{red}}_{n, (\beta,d)}
=
\sum_{\substack{k \geq 1 \\ k | (n, \beta)}}
\frac{1}{k} \mathsf{m}\left( \frac{(\beta/k)^2}{2} + 1,\, d,\,  \frac{n}{k} \right)
\end{equation}
where $\gamma^2 = \gamma \cdot \gamma$
is the self-intersection of a class $\gamma \in H_2(S,\BZ)$.
\end{conj}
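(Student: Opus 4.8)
The plan is to separate the \emph{primitive} from the \emph{imprimitive} content of \eqref{K3xE_MC}. When $\gcd(n,\beta)=1$ only the term $k=1$ survives, so the conjecture reduces to the bare identification $(-1)^n\DT^{\mathrm{red}}_{n,(\beta,d)}=\mathsf{m}\!\left(\beta^2/2+1,\,d,\,n\right)$, i.e.\ to the assertion that the reduced DT partition function of $X=S\times E$ is governed by the reciprocal of the Igusa cusp form $\chi_{10}$. The residual content for $\gcd(n,\beta)=k>1$ is a genuine multiple cover statement, relating imprimitive invariants in class $(\beta,d)$ to the primitive seeds attached to the reduced class $(\beta/k,d)$. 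I would attack the two halves with different tools: the $E$-equivariant motivic Hall algebra of Section~\ref{Section_Equivariant_motivic_Hall_algebras} for the multiple cover structure, and the reduced Gromov--Witten theory of $K3\times E$ (through the GW/DT correspondence) for the primitive seeds.

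First I would recast \eqref{K3xE_MC} in product form, exactly as in Theorem~\ref{K3xE_thm}. Applying $-\log(1-p^{\ell})=\sum_{j\ge1}p^{\ell j}/j$ term by term, the divisor sum $\sum_{k\mid(n,\beta)}\frac1k\,\mathsf{m}\!\left((\beta/k)^2/2+1,d,n/k\right)$ is precisely the logarithm of an infinite product whose exponents are the coefficients $\mathsf{m}(h,d,\ell)$. Thus the conjecture is equivalent to a Fourier--Jacobi product expansion of the full reduced series $\sum_{n,\beta}(-1)^n\DT^{\mathrm{red}}_{n,(\beta,d)}\,p^n t^{\beta}$ in terms of $-1/\chi_{10}$, and this is the shape in which I would prove it. For $\beta=0$ this product form \emph{is} Theorem~\ref{K3xE_thm}; the task is to promote the $E$-equivariant Hall algebra computation from the Hilbert scheme of points to Hilbert schemes of curves of arbitrary K3-class.

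The Hall algebra step would proceed as in the proof of Theorem~\ref{K3xE_thm}, following Toda's strategy \cite{T12}: use the wall-crossing identity in the $E$-equivariant motivic Hall algebra relating the DT series to the PT series (the mechanism already underlying Theorem~\ref{Thm_DTPT}), and then apply the integration map $\CI$ to the ring of dual numbers $\BQ[\epsilon]/(\epsilon^2=0)$. The crucial feature is that the weights $\frac1k$ are produced automatically: a subscheme whose $E$-orbit has stabilizer of order $k$ contributes orbifold Euler characteristic $1/k$ under $\CI$, while the divisibility $k\mid(n,\beta)$ records exactly which classes admit such $k$-torsion translation symmetry. Carried through, this reduces the imprimitive invariants in class $(k\beta',d)$ to the primitive invariants in class $(\beta',d)$, with the shift $(\beta')^2/2+1$ appearing as the arithmetic genus of the reduced K3-class.

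The hard part will be the primitive seeds. In the vertical case $\beta=0$ the identity \eqref{3513} factors the entire computation through the punctual Hilbert scheme of $\BC^3$, so no genuinely enumerative input is needed; for $\beta\neq0$ the primitive invariants $\mathsf{m}(\beta^2/2+1,d,n)$ encode the actual curve counting on the K3 fibration and are \emph{not} accessible by motivic Hall algebra manipulation alone. Here I would invoke the reduced GW/DT correspondence for $K3\times E$ together with the Gromov--Witten evaluation of the Igusa cusp form, identifying the primitive DT series with the corresponding Fourier--Jacobi coefficient of $1/\chi_{10}$. Matching this analytic input to the Hall algebra multiple cover structure---and in particular controlling, uniformly over all divisors $k$, how the $E$-translation action interacts with \emph{families} of curves on $S$ rather than with points as in Theorem~\ref{K3xE_thm}---is the principal obstacle, and is the reason the full conjecture \eqref{K3xE_MC} lies beyond the vertical case settled here.
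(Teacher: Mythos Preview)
The statement you are addressing is Conjecture~\ref{K3xE_conj}, which the paper does \emph{not} prove; it is recorded as an open conjecture from \cite{K3xE}, and the paper establishes only the degenerate case $\beta=0$ (Theorem~\ref{K3xE_thm}). There is therefore no proof in the paper to compare your proposal against. You yourself acknowledge this in your final paragraph: the primitive seeds for $\beta\neq 0$ require the reduced GW/DT correspondence and the Igusa identification, neither of which is available, and this ``is the reason the full conjecture \eqref{K3xE_MC} lies beyond the vertical case settled here.'' So what you have written is a strategy outline, not a proof, and it is honest about that.

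Two remarks on the strategy itself. First, your mechanism for the $1/k$ weights is not quite what happens in the paper's $\beta=0$ argument. You attribute the $1/k$ to orbifold Euler characteristics of $E$-orbits with order-$k$ stabilizer inside the Hilbert scheme. In the paper the reduction from imprimitive to primitive $N$-invariants (Proposition~\ref{Prop_N}) goes instead through Atiyah's classification of bundles on $E$ and a derived equivalence on the moduli of semistable sheaves; the numerical factors arise because the determinant map intertwines translation by $a$ with translation by $na$, producing a factor $1/n_0^2$ from the $n_0$-torsion of $E$. This mechanism is specific to sheaves supported on elliptic fibers and does not obviously globalize to curves with nonzero K3-component, so the Hall-algebra half of your plan already faces a genuine obstacle for $\beta\neq 0$, independent of the primitive input. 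Second, even granting the primitive case, the conjecture's multiple cover sum runs over $k\mid(n,\beta)$ with $d$ held \emph{fixed}, not over $k\mid(n,\beta,d)$; your stabilizer heuristic does not explain this asymmetry between the K3 and elliptic directions.
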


The equality of Conjecture~\ref{K3xE_conj} is conjectured
to hold for all cases where it is defined.
Indeed, the reduced DT invariants on the left hand side
are defined if and only if $(\beta,n) \neq (0,0)$
which precisely coincides with the case
where the sum on the right hand side makes sense.

If $\beta$ is primitive of square $\beta^2 = 2h-2$
then \eqref{K3xE_MC} says
the reduced DT invariant
is up to a sign equal to the coefficient $\mathsf{m}(h,d,n)$.
If $\beta$ is imprimitive, then \eqref{K3xE_MC}
expresses the reduced DT invariant in terms of primitive invariants.
Hence we sometimes refer to \eqref{K3xE_MC}
as a multiple cover formula.
In the most degenerate case $\beta = 0$ we recover Theorem~\ref{K3xE_thm}.

Finally, for every $d \geq 0$ the rule \eqref{K3xE_MC}
may be reformulated in the following product expansion:
\[
\exp\bigg(
\sum_{(n, \beta) \neq 0}
\DT^{\mathrm{red}}_{n, (\beta,d)} (-p)^n t^{\beta} 
\bigg)
=
\prod_{(\ell, \gamma) \neq 0}
\left(
\frac{1}{1 - p^{\ell} t^{\gamma}}
\right)^{\mathsf{m}(\gamma^2/2+1,d,\ell)}
\]
where $(n,\beta)$ and $(\ell, \gamma)$
run over all non-zero pairs of an integer and a
(possibly zero) curve class in $H_2(S,\BZ)$.

\subsection{Comparision with Gromov--Witten theory}
The formula \eqref{K3xE_MC}
was conjectured in \cite{K3xE}
for reduced Gromov--Witten (GW) invariants
in curve classes $(\beta,d)$ where $\beta \neq 0$.
Translating the statement to DT theory
via the conjectural reduced GW/DT correspondence\footnote{
The reduced GW/PT correspondence is conjectured in \cite[Conj.D]{K3xE},
to which we apply the $\DT/\PT$-correspondence of \cite{O1}.}
yields Conjecture~\ref{K3xE_conj}.
While reduced GW invariants are not defined for
$\beta = 0$,
the formula makes sense on the DT side
and surprisingly gives the correct result. 

If $\beta$ vanishes the Donaldson--Thomas generating
series is not a rational function
and the variable change $p=e^{iu}$ is not well-defined.
However, parallel to the case of degree zero
DT invariants discussed in \cite[2.1]{MNOP}
an \emph{asymptotic} correspondence
may be established as follows.

The analog of the reduced (disconnected) Gromov--Witten potential
in case $\beta = 0$ and genus $g \geq 2$ is the series
\[
\CF^g(t) = \int_S c_2(S) \cdot \prod_{m \geq 1} \frac{1}{(1-t^m)^{\int_S c_2(S)}}
\sum_{d = 0}^{\infty}
\frac{1}{2g-2} \blangle \tau_1(\omega) \lambda_{g-1} \lambda_{g-2} \rangle^{E}_{g, d}t^d
\]
where $\langle \cdot \rangle^E_{g, d}$
are the connected Gromov--Witten
invariants of the elliptic curve~$E$
in genus $g$ and degree $d$,
and $\omega \in H^2(E, \BZ)$ is the class of a point,
$\tau_1$ is the first descendent insertion,
and $\lambda_{k}$ is the $k$-th Chern class of the Hodge bundle.
The Euler factor
\[ \prod_{m \geq 1} \frac{1}{(1-t^m)^{\int_S c_2(S)}} \]
is the contribution of the
non-reduced Gromov--Witten theory of $X$. 
The factor $2g-2$ corrects for the
integration of the cotangent line bundle over each curve,
compare \cite[Sec.7]{BOPY}.
A calculation by Pixton \cite[Prop.4.4.6]{Pix}
based on the results \cite{OP1, OP3}
shows
\[ \sum_{d = 0}^{\infty}
\blangle \tau_1(\omega) \lambda_{g-1} \lambda_{g-2} \rangle^{E}_{g, d}t^d
=
(-1)^{g} B_{2g-2} \binom{2g}{2} C_{2g}(t) \]
where $B_{k}$ are the Bernoulli numbers and
\[
C_{k}(t) = -\frac{B_k}{k \cdot k!}
+ \frac{2}{k!}
\sum_{n \geq 1} \sum_{\ell | n} \ell^{k-1} t^n \]
are renormalized classical Eisenstein series.
Let also
\[ \CF^g(t) = \sum_{d = 0}^{\infty} \CF^g_d t^d \,. \]

Then by Theorem~\ref{K3xE_thm}
the asmptotic Gromov--Witten/Donaldson--Thomas correspondence
holds for all $d \geq 0$:
\begin{equation} \label{corrrr}
\sum_{g = 2}^{\infty} \CF_d^g u^{2g-2}
\, \sim \,
c_d \cdot
\sum_{n=1}^{\infty} \DT^{\mathrm{red}}_{n,(0,d)} (-p)^n
\end{equation}
under the variable change $p=e^{iu}$,
where we have $c_0 = -1/2$, and $c_d = -1$ for all $d \geq 1$,
and $\sim$ stands for taking the formal expansion on the right hand side,
interchanging sums and renormalizing the genus $g \geq 2$ terms
via negative zeta values.
The overall minus sign in the correspondence~\eqref{corrrr}
corresponds to the difference of the Behrend function
of the Hilbert scheme and its quotient by translation.
The factor $1/2$ in case $d=0$ is
parallel (via taking the logarithm)
to the square root
in the degree 0 asymptotic GW/DT correspondence \cite[Eqn.2]{MNOP}.

\subsection{Abelian 3-folds}
Let $A$ be an abelian threefold, and let
$\beta \in H_2(A, \mathbb{Z})$
be a curve class of type $(d_1, d_2, d_3)$.
Assuming deformation invariance also in the case
$n=0$ we will simply write
\[ \DT^{\mathrm{red}}_{n, (d_1, d_2, d_3)}
= \DT^{A, A\text{-red}}_{n, (d_1, d_2, d_3)} \]
The translation action on the Hilbert scheme
has finite stabilizers
(and hence reduced DT invariants are defined)
if and only if $n \neq 0$ or at least
two of the integers $d_1, d_2, d_3$ are positive.

Define coefficients $\mathsf{a}(k)$ by the expansion
\[
\sum_{d = 0}^{\infty} \sum_{r \in \BZ} \mathsf{a}(4d-r^2) p^r t^d
\, = \, 
- \phi_{-2,1}(p,t)
\]
where the Jacobi form $\phi_{-2,1}$
was defined in \eqref{phim21}. Let also
\[ \mathsf{n}(d_1,d_2,d_3,k) = \sum_{\delta} \delta^2 \]
where $\delta$ runs over all divisors of 
\[ \gcd\left( k, d_1, d_2, d_3, \frac{d_1 d_2}{k}, \frac{d_1 d_3}{k}, \frac{d_2 d_3}{k}, \frac{d_1 d_2 d_3}{k^2} \right) \, \]
when all numbers in the bracket are integers.

\begin{conj}[\cite{BOPY}] \label{Ab_conj}
If $n > 0$ or
at least two of the $d_i$ are positive, then
\[ (-1)^n \DT^{\mathrm{red}}_{n,(d_1,d_2,d_3)}
= \sum_{k} \frac{1}{k}\,
\mathsf{n}(d_1,d_2,d_3,k) \cdot
\mathsf{a}\left( \frac{4 d_1 d_2 d_3 - n^2}{k^2} \right)
\]
where $k$ runs over all divisors of
$\gcd( n, d_1 d_2, d_1 d_3, d_2 d_3 )$ such that $k^2 \big| d_1d_2d_3$.
\end{conj}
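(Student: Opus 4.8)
The plan is to extend the equivariant Hall algebra method that establishes the degenerate case (Theorem~\ref{Thm1}) to curve classes of arbitrary type $(d_1,d_2,d_3)$. Three structural inputs drive the argument. First, by Gulbrandsen's deformation invariance \cite{Gul} the invariant $\DT^{\mathrm{red}}_{n,(d_1,d_2,d_3)}$ depends only on the type, so we are free to replace $A$ by any abelian threefold carrying a class of the given type; a product $B \times E$ of an abelian surface and an elliptic curve, or a product of three elliptic curves, will be the most convenient models. Second, the reduced DT/PT correspondence for abelian threefolds proved in Section~\ref{Subsection_Abelianthreefold_DTPT} lets us compute with the moduli of stable pairs $P_n(A,\beta)$ in place of the Hilbert scheme, which trades the ideal-sheaf geometry for the more rigid geometry of pure one-dimensional sheaves with a section. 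Third, the integration map to $\BQ[\epsilon]/(\epsilon^2)$ records the quotient Euler characteristic in its $\epsilon$-coefficient, and the multiplicative (plethystic) structure of the Hall algebra is exactly what produces the divisor sum $\sum_k \tfrac1k$ appearing on the right of Conjecture~\ref{Ab_conj}.

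First I would set up the $A$-equivariant wall-crossing in the motivic Hall algebra of Section~\ref{Section_Equivariant_motivic_Hall_algebras}, following Toda's strategy \cite{T12} as in the proof of the main theorems. Applying $\CI$ to the wall-crossing identity and extracting the $\epsilon$-coefficient converts the stable-pair generating series into a sum over ``connected'' contributions; the logarithm/exponential bookkeeping that already governs Theorems~\ref{K3xE_thm} and \ref{Thm1} should here isolate the primitive invariants and simultaneously generate the multiple-cover factor $\tfrac1k\,\mathsf{n}(d_1,d_2,d_3,k)$. The arithmetic function $\mathsf{n}$, a sum of squares of divisors of a gcd built from the $d_i$, is precisely the kind of quantity produced by counting the ways a substructure of ``index $k$'' sits inside the class, so I expect it to emerge from the stabilizer/finite-quotient combinatorics of the free $A$-action, exactly as the factor $k^2$ did in Theorem~\ref{Thm1}.

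It then remains to identify the primitive generating series with the Jacobi form $-\phi_{-2,1}$ of \eqref{phim21}. For this I would exploit the product model $B \times E$: a class of suitable type meets the elliptic fibre $E$, and pushing forward along the projection to $B$ (or applying a Fourier--Mukai transform to the dual abelian variety, where the ``type'' is literally the divisor data \cite{BOPY}) should express the reduced pair invariants through the geometry of the abelian surface $B$ and its Hilbert schemes of points, whose generating functions are governed by $\phi_{-2,1}$. The hardest part of the whole argument is this nondegenerate step. In type $(0,0,d)$ the class carries no moving curve and the computation reduces to a weighted point count with constant Behrend function; for classes with two or three positive $d_i$ the moduli space is genuinely three-dimensional, the Behrend function is no longer constant, and one must control it in families under the free $A$-action before the equivariant Euler characteristic can be evaluated. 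Establishing that the Behrend-weighted integral over $P_n(A,\beta)/A$ assembles into $-\phi_{-2,1}$ in every primitive type is the main obstacle, and it is exactly the input that is unavailable for the degenerate argument of Theorem~\ref{Thm1}; this is why the full Conjecture~\ref{Ab_conj} remains open beyond the case $(0,0,d)$ treated here.
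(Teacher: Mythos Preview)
The statement in question is Conjecture~\ref{Ab_conj}, which the paper records as an \emph{open conjecture} from \cite{BOPY} and does not prove. The paper establishes only the degenerate case of type $(0,0,d)$ (Theorem~\ref{Thm1}); there is no proof in the paper to compare your proposal against. Your write-up is not a proof but a strategy outline, and you yourself concede at the end that the decisive step---identifying the primitive generating series with $-\phi_{-2,1}$---is unavailable. So the honest summary is that neither the paper nor your proposal proves the conjecture.

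Beyond that, your sketch understates where the method actually breaks. The wall-crossing identity (Theorem~\ref{reduced_Toda_equation} and its abelian analogue \eqref{wcformula22}) expresses $\DT^{\mathrm{red}}$ through the reduced $N$-invariants \emph{and} the $L$-invariants. In type $(0,0,d)$ both are computable: the $L$-side collapses because the relevant Hilbert schemes are either empty or $A$-fixed, and $N^{A\text{-red}}_{n,(0,d)}$ is determined by Atiyah's classification of sheaves on an elliptic curve together with a fibrewise derived equivalence (Propositions~\ref{Prop_N} and \ref{Prop_N_A}). For a class with two or more positive $d_i$ neither ingredient survives: there is no classification of semistable one-dimensional sheaves to replace Atiyah's, hence no analogue of the derived equivalence that produced the factor $(k/n)^2$, and the $L$-invariants are no longer forced to be trivial. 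Your expectation that $\mathsf{n}(d_1,d_2,d_3,k)$ will emerge from ``stabilizer/finite-quotient combinatorics'' is not supported by the case actually computed: there the divisor sum $\sum_{\ell|k}\ell^2$ came from the Jordan--H\"older structure of semistable sheaves and the behaviour of determinants under translation, not from translation stabilizers, and the intricate gcd condition defining $\mathsf{n}$ in general has no visible origin in that picture. So the gap is not only the primitive identification with $-\phi_{-2,1}$; the multiple-cover structure itself lies beyond the reach of the equivariant Hall algebra argument as developed here.
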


For abelian threefolds
we obtain product formulas only if $d_1 = 1$ (up to permutation).
Assuming Conjecture~\ref{Ab_conj}
we have in analogy with the Igusa cusp form
\[
\exp\left( \sum_{d, \tilde{d} = 0}^{\infty} \sum_{n \in \BZ}
\DT^{\mathrm{red}}_{n, (1, d, \tilde{d})} (-p)^n t^{d} \tilde{t}^{\tilde{d}} \right)
=
\prod_{h, d, k}
\left( \frac{1}{1 - p^k t^{h} \tilde{t}^{d}} \right)^{\mathsf{a}(4 h d - k^2)}
\]
where the product is over all
$k \in \BZ$ and $m_1, m_2 \geq 0$ such that
$m_1 > 0$, or $m_2 > 0$, or $m_1 = m_2 = 0$ and $k > 0$.

\section{Equivariant Grothendieck rings}
\label{Section_Equivariant_Grothendieck_rings}
\subsection{Overview}
As a toy example for the equivariant Hall algebra
we introduce the equivariant Grothendieck ring
and its integration map to the dual numbers.
As application we reprove the following result of \cite{S}
and \cite{MS}.

Let $A$ be an abelian variety and
let $Y$ be a non-singular quasi-projective variety.
The action of $A$ act on $Y \times A$ by translation in the second factor induces an action
on the  Hilbert scheme of points
$\mathrm{Hilb}^n(Y \times A)$ by translation.
The quotient
\[
\mathrm{Hilb}^n(Y \times A)/A
\]
is a Deligne--Mumford stack for every $n>0$. We also let $d = \dim( Y \times A )$.

\begin{thm} \label{Thm_degree0_Euler_Characteristic} We have
\[ \mathrm{exp}\left(\sum_{n = 1}^{\infty}
e\big( \mathrm{Hilb}^n(Y \times A)/A \big)
q^n \right)
= \left(\sum_{n = 0}^{\infty} P_{d}(n)q^n\right)^{e(Y)}.
\]
where $P_d(n)$ is the number of $d$-dimensional partitions of $n$.
\end{thm}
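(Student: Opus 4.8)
The plan is to promote the product formula \eqref{3513} for Hilbert schemes of points to the $A$-equivariant Grothendieck ring and then apply the integration map $\CI$. First I would set up the equivariant Grothendieck ring $K_0^A(\mathrm{Var})$ whose generators are $A$-equivariant varieties $[Z \curvearrowleft A]$ modulo scissor relations that respect the action, with product given by the fibre product carrying the diagonal $A$-action; the integration map $\CI$ of the introduction, $\CI(Z) = e(Z^A) + e((Z - Z^A)/A)\epsilon$, should be checked to be a ring homomorphism to $\BQ[\epsilon]/(\epsilon^2)$ on this ring. The key input is the identity
\[
\sum_{n \geq 0} [\Hilb^n(Y \times A)]\, q^n = \left( \sum_{n \geq 0} [\Hilb^n(\BC^d)_0]\, q^n \right)^{[Y \times A]}
\]
of \cite{GLM2}, which I would argue lifts verbatim to $K_0^A(\mathrm{Var})$: the power-structure construction is geometric (it stratifies $\Hilb^n(Y\times A)$ by the configuration of support points together with the punctual Hilbert scheme data at each point), and since $A$ acts by translation on $Y \times A$ it acts compatibly on each stratum, permuting support configurations and acting trivially on the punctual factors $\Hilb(\BC^d)_0$. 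Hence the stratification is $A$-equivariant and the identity holds equivariantly.

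Next I would compute the two sides under $\CI$. Since $A$ acts freely on $Y \times A$ (translation in a factor by a positive-dimensional group), the fixed locus is empty and $\CI(Y \times A) = e((Y \times A)/A)\,\epsilon = e(Y)\,\epsilon$. On the left side, for $n > 0$ the action on $\Hilb^n(Y \times A)$ still has trivial fixed locus (a finite subscheme cannot be translation-invariant under a positive-dimensional abelian variety), so $\CI(\Hilb^n(Y\times A)) = e(\Hilb^n(Y\times A)/A)\,\epsilon$; the $n = 0$ term is a point, contributing $1$. On the right side, $\Hilb^n(\BC^d)_0$ carries the trivial $A$-action, so $\CI\big(\sum_n [\Hilb^n(\BC^d)_0] q^n\big) = \sum_n P_d(n) q^n$ with no $\epsilon$ (using $e(\Hilb^n(\BC^d)_0) = P_d(n)$, the count of $d$-dimensional partitions). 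Now I need to know how $\CI$ interacts with the power operation $f^{[\cdot]}$: the point of the dual-number target is that $\CI$ converts the power-structure exponentiation by an $A$-variety $Z$ into ordinary exponentiation weighted by $\CI(Z)$, i.e. $\CI(f^{Z}) = \CI(f)^{\CI(Z)}$ where on the right we use the convention $g(q)^{\epsilon} = \exp(\epsilon \log g)$; this should follow formally from multiplicativity of $\CI$ together with the defining property of the power structure once one observes that the $\epsilon^2 = 0$ truncation kills all but the linear term. Applying this with $f(q) = \sum_n P_d(n) q^n$ and $Z = Y \times A$, $\CI(Z) = e(Y)\epsilon$, gives
\[
1 + \epsilon \sum_{n\geq 1} e(\Hilb^n(Y\times A)/A)\, q^n = \Big(\sum_{n\geq 0} P_d(n) q^n\Big)^{e(Y)\epsilon} = 1 + \epsilon\, e(Y) \log\Big(\sum_{n\geq 0} P_d(n) q^n\Big),
\]
and extracting the $\epsilon$-coefficient and exponentiating yields the claimed formula.

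The main obstacle I anticipate is making precise, and verifying, the compatibility $\CI(f^{Z}) = \CI(f)^{\CI(Z)}$ between the integration map and the power structure — equivalently, checking that $\CI$ is a morphism of power-structure-equipped rings (or at least that it intertwines the relevant exponentials after truncation). This requires care about what "power structure" means on the equivariant ring $K_0^A(\mathrm{Var})$ and about the effective symmetric-product combinatorics hidden in $f^{Z}$; the translation action on symmetric products $\mathrm{Sym}^k(Y \times A)$ is only generically free, with nontrivial (finite) stabilizers along the diagonals, so one must confirm that the orbifold Euler characteristic convention in the definition of $\CI$ is exactly what makes the bookkeeping work, and that the $n = 0$ term is correctly normalized to $1$. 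A secondary, more routine point is verifying freeness/finite-stabilizer claims for the $A$-action on $\Hilb^n$ so that $\CI$ takes the simple form $e(-/A)\epsilon$ there; this is elementary since any $0$-dimensional subscheme has finite-dimensional, hence trivial-in-$A$, stabilizer.
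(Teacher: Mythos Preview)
Your overall strategy matches the paper's: lift the Hilbert scheme power-structure identity to $K_0^A(\mathrm{Var})$ and apply $\CI$. You also correctly isolate the crux, namely the compatibility $\CI(f^{Z}) = \CI(f)^{\CI(Z)}$. However, your expectation that this ``should follow formally from multiplicativity of $\CI$'' is where the proposal has a genuine gap. The power structure is built from symmetric products, not products: the coefficient of $q^n$ in $(1/(1-q))^{[Y\times A]}$ is the class of the symmetric power $(Y\times A)^{(n)}$ with its induced diagonal $A$-action, and a ring homomorphism has no reason to send $[\mathrm{Sym}^n Z]$ to anything controlled by $[Z]$ alone. Concretely, what you need is the identity $e\big((Y\times A)^{(n)}/A\big) = e(Y)/n$, and this does \emph{not} drop out of $\epsilon^2=0$ or multiplicativity.

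The paper handles this in two moves you are missing. First, it reduces to $A$ simple (citing a topological invariance result), which is needed even to define $\CI$ as you describe it; for general $A$ the stabilizer stratification is more complicated. Second, rather than proving a general compatibility of $\CI$ with power structures, it factors the punctual series as $\prod_m (1-q^m)^{-[M_m]}$ with trivially-acted $[M_m]$, so that it suffices to check the single identity
\[
\CI\Big( (1-q)^{-[Y\times A,\,a]} \Big) = (1-q)^{-e(Y)\epsilon}.
\]
This is proved by a direct computation: the addition map $\pi:(Y\times A)^{(n)}\to A$ exhibits $(Y\times A)^{(n)}/A$ as $\pi^{-1}(0_A)/A[n]$, and one computes $e(\pi^{-1}(0_A)) = e(Y)\cdot n^{2g-1}$ (this is the nontrivial input, taken from \cite{MS}), whence $e((Y\times A)^{(n)}/A) = e(Y)/n$. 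That arithmetic identity, not any formal property of $\CI$, is what makes the $\epsilon$-side equal $e(Y)\log(1/(1-q))$. Your proposal should incorporate this computation (or an equivalent one) rather than hoping it is automatic.
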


\subsection{Equivariant Grothendieck rings}
Let $A$ be a simple
abelian variety of dimension $g > 0$.
The $A$-equivariant Grothendieck group of varieties
is the free abelian group $K^A_0(\mathrm{Var})$
generated by the classes
\[
[X , a_X]
\]
of a variety $X$ together with an $A$-action $a_X : A \times X \rightarrow X$,
modulo the equivariant scissor relations:
For every $A$-invariant closed sub-variety
$Z \subset X$ with complement $U$,
\[
[X , a_X] = [Z, a_X|_Z] + [U, a_X|_U] \,.
\]

For varieties $X$ and $Y$
with $A$-actions $a_X$ and $a_Y$ respectively,
let $a_{X \times Y}$ be the $A$-action on the product $X \times Y$
obtained from the diagonal $A \to A \times A$
and the product action $a_X \times a_Y$.
We define a multiplication on $K^A_0(\mathrm{Var})$ by
\[
[X, a_X] \times [Y,a_Y] = [X \times Y , a_{X \times Y}] \,.
\]
The product is commutative and associative with unit 
\[ [ \mathrm{Spec(\mathbb{C})}, a_{\mathrm{triv}} ] \]
where $a_{\mathrm{triv}}$ is the trivial $A$-action.
We call the pair $\big( K^A_0(\mathrm{Var}), \times \big)$
the $A$-equivariant Grothendieck ring. 

\subsection{Schemes}
The $A$-equivariant Grothendieck group of schemes
is the free abelian group $K^A_0(\mathrm{Sch})$
generated by the classes
$[X , a_X]$
of a scheme $X$ together
with an $A$-action $a_X : A \times X \rightarrow X$,
modulo the following relations:
\begin{enumerate}
\item[(a)] $[X \sqcup Y, a_X \sqcup a_Y] = [X,a_X]
+ [Y, a_Y]$ for every pair of schemes $X$ and $Y$
with $A$-actions $a_X$ and $a_Y$ respectively,
\item[(b)] $[X, a_X] = [Y, a_Y]$ for every
$A$-equivariant geometric bijection\footnote{The map
$f$ is a geometric bijection if
the induced map
$f(\BC) : X(\BC) \to Y(\BC)$
on $\BC$-valued points is a bijection,
see \cite[Defn.2.7]{Br2}.}
${X \xrightarrow{f} Y}$.
\end{enumerate}
The product on $K^A_0(\mathrm{Sch})$
is defined identical to the case of varieties.
Since the equivariant scissor relation
is implied by relations (a) and (b)
the natural embedding of the category
of varieties into the category of schemes
determines a ring homomorphism
\begin{equation} K^A_0(\mathrm{Var}) \to K^A_0(\mathrm{Sch}) \,.
\label{23313}
\end{equation}

\begin{lemma}\label{2333} The morphism \eqref{23313} is an isomorphism.
\end{lemma}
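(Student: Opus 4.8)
The plan is to construct an explicit inverse to \eqref{23313}. The standard (non-equivariant) fact is that $K_0(\mathrm{Var}) \to K_0(\mathrm{Sch})$ is an isomorphism: a scheme is generically reduced after passing to $X_{\mathrm{red}}$, which is a geometric bijection, and then one stratifies by locally closed reduced pieces. I would mimic this argument, the only new issue being that everything must be carried out $A$-equivariantly. First I would observe that for a scheme $X$ with $A$-action $a_X$, the reduced subscheme $X_{\mathrm{red}} \subset X$ is $A$-invariant (the action factors through it since $A \times X_{\mathrm{red}}$ is reduced when $A$ is a variety, so the composite $A \times X_{\mathrm{red}} \to A\times X \to X$ lands in $X_{\mathrm{red}}$), and the inclusion $X_{\mathrm{red}} \hookrightarrow X$ is an $A$-equivariant geometric bijection. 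Hence in $K_0^A(\mathrm{Sch})$ we have $[X,a_X] = [X_{\mathrm{red}}, a_X|_{X_{\mathrm{red}}}]$, so the map \eqref{23313} is surjective.

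For injectivity I would define a candidate inverse $\Phi : K_0^A(\mathrm{Sch}) \to K_0^A(\mathrm{Var})$ on generators by $\Phi([X,a_X]) = [X_{\mathrm{red}}, a_X|_{X_{\mathrm{red}}}]$ and check it respects relations (a) and (b). Relation (a) is immediate since $(X \sqcup Y)_{\mathrm{red}} = X_{\mathrm{red}} \sqcup Y_{\mathrm{red}}$ compatibly with the actions. For relation (b), given an $A$-equivariant geometric bijection $f : X \to Y$, I need $[X_{\mathrm{red}}, \cdot] = [Y_{\mathrm{red}}, \cdot]$ in $K_0^A(\mathrm{Var})$; the induced map $f_{\mathrm{red}} : X_{\mathrm{red}} \to Y_{\mathrm{red}}$ is still an $A$-equivariant geometric bijection of \emph{varieties}, so it suffices to prove that two varieties related by an $A$-equivariant geometric bijection have the same class in $K_0^A(\mathrm{Var})$. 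This I would prove by Noetherian induction on $Y_{\mathrm{red}}$: by generic flatness / generic reducedness of the fibres, there is a dense $A$-invariant open $V \subset Y_{\mathrm{red}}$ over which $f_{\mathrm{red}}$ restricts to an isomorphism onto its image (one obtains an $A$-invariant open by intersecting the translates, or by spreading out over the open orbit-type locus — here simplicity of $A$ is a convenient hypothesis but the key point is just that a dense open can be chosen $A$-invariant), apply the equivariant scissor relation on both sides, and induct on the lower-dimensional $A$-invariant closed complements. Granting $\Phi$ is well-defined, it is clear that $\Phi$ composed with \eqref{23313} is the identity on $K_0^A(\mathrm{Var})$ (since $X = X_{\mathrm{red}}$ for a variety) and that \eqref{23313} composed with $\Phi$ is the identity on $K_0^A(\mathrm{Sch})$ (by the surjectivity computation above), so \eqref{23313} is an isomorphism.

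The main obstacle is the well-definedness of $\Phi$ on relation (b) — specifically, producing a \emph{dense $A$-invariant} open on which an equivariant geometric bijection is an isomorphism, so that the Noetherian induction closes up within the equivariant scissor relations. In the non-equivariant setting this is routine (Br2, proof of the analogous statement), but one must take care that all the intermediate strata and opens can be taken $A$-invariant; the cleanest route is to work on the quotient stack $[Y_{\mathrm{red}}/A]$, where $f$ descends to a geometric bijection and the usual generic-isomorphism argument applies, then pull back. I expect this to be the only genuinely new input; the rest is bookkeeping parallel to the non-equivariant case.
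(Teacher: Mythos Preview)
Your argument has a genuine gap: in this paper a variety is by convention a \emph{reduced, separated} scheme of finite type, so passing to $X_{\mathrm{red}}$ does not in general produce a variety. Your proposed inverse $\Phi([X,a_X]) = [X_{\mathrm{red}},\cdot]$ is therefore not even a well-defined element of $K_0^A(\mathrm{Var})$, and your surjectivity argument does not land in the image of \eqref{23313}. The paper deals with this explicitly: after reducing, one finds an affine open $U \subset X$ consisting of separated points, observes via the valuative criterion that the set of separated points is $A$-invariant, so the $A$-saturation $U + A$ is an $A$-invariant separated open (hence a variety), and then iterates on the complement to obtain an $A$-equivariant stratification of $X$ by varieties. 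This is the missing ingredient in your proposal.

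A smaller point: when you make the generic-isomorphism open $A$-invariant you suggest ``intersecting the translates'', but an infinite intersection of opens need not be open. The correct move (and the one the paper uses) is to take the \emph{union} of translates: if $f^{-1}(U)\to U$ is an isomorphism and $f$ is $A$-equivariant, then $f^{-1}(a\cdot U)\to a\cdot U$ is also an isomorphism for every $a\in A$, so $f$ is an isomorphism over the open $A$-invariant set $\bigcup_{a\in A}(a\cdot U)$. With that fix, your Noetherian-induction handling of relation (b) matches the paper's argument. Once you repair the separatedness issue by an equivariant stratification into genuine varieties, the rest of your outline is essentially the paper's proof.
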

\begin{proof}
This is parallel to \cite[Sec.2.3, 2.4]{Br2}.
Let $X$ be a scheme with $A$-action $a_X$.
We first show the class $[X,a_X]$ is in the image of \eqref{23313}.

By relation (b) we may assume $X$ is reduced.
Then there is an affine open $U \subset X$
such that every point of $u$ is seperated in $X$.\footnote{
Let $\Delta \subset X \times X$ be the diagonal.
The non-separated points of $X$ are the closure
the image of $\overline{\Delta} \setminus \Delta$
under the projection to the second factor.
Hence we may assume $X$ is irreducible.
Since $\overline{\Delta} \setminus \Delta$
has dimension strictly less then $X$,
the scheme $\overline{\Delta} \setminus \Delta$ does not dominate $X$.}
By the valuative criterion, being seperated
is invariant under translation by $A$.
Hence every point of
the translate $U+A$, i.e.
the image of $A \times U \xrightarrow{a_X} X$,
is seperated, and $U+A$ is a variety.
Repeating the argument with the complement of $U+A$,
by induction there exists an $A$-equivariant stratification of $X$ by
varieties. Thus $X$ admits an $A$-equivariant geometric bijection
from a variety $Y$, which by (b) implies the claim.

It remains to check the relations imply each other.
The key step is to prove relation (b)
follows from the equivariant
scissor relation.
By stratification we may assume
$f:X \to Y$ is a $A$-equivariant geometric bijection
of varieties.
Then by the proof of \cite[Lem 2.8]{Br2}
there is an open subset $U \subset Y$ such that
$f^{-1}(U) \to U$ is an isomorphism.
Since $f$ is $A$-equivariant
we may assume $U$ is $A$-invariant.
Replacing $X,Y$ by the complement of $U, f^{-1}(U)$
respectively and repeating the argument,
the process has to terminate
at which point we obtain
$[X,a_X] = [Y,a_Y]$ in $K_0^A(\mathrm{Var})$.
\end{proof}

We identify the groups
$K^A_0(\mathrm{Var})$ and $K^A_0(\mathrm{Sch})$
via the isomorphism \eqref{23313}.

\subsection{Power structures}
\label{Subsection_Power_Structures}
Recall from \cite{GLM} that a power structure over a commutative ring $R$ is a map
\[
(1+q R[[q]]) \times R \rightarrow (1+ q R[[q]]),
\]
denoted by $(f(q),r) \mapsto f(t)^r$,
satisfying the following 5 axioms
\begin{enumerate} 
 \item $f(q)^0 = 1$,
 \item $f(q)^1 = f(q)$,
 \item $f(q)^n\cdot g(q)^n = \left( f(q)\cdot g(q)\right)^{n}$,
 \item $f(q)^{n+m} = f(q)^{n}\cdot f(q)^{m}$,
 \item $f(q)^{nm} = \left(f(q)^n\right)^m$.
\end{enumerate}  

The power structure over the ordinary
Grothendieck ring $K_0(\mathrm{Var})$ was defined in \cite{GLM}
as follows.
Assume $S_0(\mathrm{Var})$ is the semi-subring
of $K_0(\mathrm{Var})$ spanned by effective
classes.  Let
\[
f(t) = 1+\sum_{k \geq 1} [M_k] q^k
\]
be a series in $S_0(\mathrm{Var})[[q]]$,
and $[R]$ be a class in $S_0(\mathrm{Var})$.
Then $f(t)^{[R]}$ is defined to be
the series $1+ \sum_{n \geq 1} [W_n] q^n$ with
\begin{equation}\label{def_power}
[W_n] =
\sum_{ \substack{ (k_1, k_2, \ldots ) \\ \sum_i ik_i =n }}
\left[
\Big( \big( \prod_{i} R^{k_i} \big) \setminus \triangle \Big)
\times \prod_{i} M_i^{k_i} /\prod_{i} S_{k_i}
\right],
\end{equation}
where $\triangle$ is the big diagonal in $\prod_i R^{k_i}$,
and $S_{k_i}$ acts by permuting
the corresponding $k_i$ factors
in $( \prod_i R^{k_i} )\setminus \triangle$
and $M_i^{k_i}$ simultaneously, compare \cite{GLM}.
This defines a power structure over $S_0(\mathrm{Var})$
which extends uniquely to a power structure over $K_0(\mathrm{Var})$.

We define a power structure
on the $A$-equivariant Grothendieck ring
$K^A_0(\mathrm{Var})$
by exactly the same procedure.
It only remains, given $A$ actions on $M_k$ and $R$ respectively,
to endow the classes \eqref{def_power} with $A$-actions.
The $A$-actions on $M_k$ and $R$ induce a diagonal action on each effective class   
\[
\left[
\Big( \big( \prod_{i} R^{k_i} \big) \setminus \triangle \Big)
\times \prod_{i} M_i^{k_i} /\prod_{i} S_{k_i}
\right] \,.
\]
and we let $[W_k]$ be the associated
equivariant effective class in $K^A_0(\mathrm{Var})$.
As in \cite[Thm 2.1 and 2.2]{MS} this
defines a power structure
over the semi-ring of $A$-equivariant effective classes,
which extends uniquely to $K^A_0(\mathrm{Var})$.

\subsection{Canonical decompositions and $\epsilon$-integration maps I} \label{Integration}
Let $a_X$ be an $A$-action on a variety $X$.
Let $U \subset X$ be the closed subset
of $A$-fixed points,
and let $V = X \setminus U$ be its complement.
We call the associated scissor relation
\begin{equation}\label{decomp}
[X, a_X] = [U, a_{\mathrm{triv}}]+ [V , a_{X}|_V]
\end{equation}
the canonical decomposition of $[X, a_X]$.
Since $A$ is a simple abelian group,
the induced $A$-action on $V$ has finite stabilizers
and the quotient $V/A$ is a Deligne--Mumford stack.
We define the $\epsilon$-integration map
\[
\CI \colon K^A_0(\mathrm{Var}) \rightarrow \mathbb{Q}[\epsilon]/ \epsilon^2
\]
to be the unique group homomorphism satisfying
\[ \CI( [X,a_X] ) = e(U)+  e(V/A) \cdot \epsilon \]
for every variety $X$ with canonical decompostion \eqref{decomp}.

Since stratification along stabilizers
is compatible with the scissor relation,
the canonical decomposition (\ref{decomp})
extends uniquely to all classes in $K^A_0(\mathrm{Var})$,
and the map $\CI$ is well-defined.

\begin{lem} \label{ring_structure}
The $\epsilon$-integration map $\CI$ is a ring homomorphism.
\end{lem}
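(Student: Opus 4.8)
The plan is to verify that $\CI$ respects the product, i.e. that $\CI([X,a_X]\times[Y,a_Y]) = \CI([X,a_X])\cdot\CI([Y,a_Y])$ for all varieties $X,Y$ with $A$-actions, since both sides are additive in each variable and every class is a $\BZ$-combination of such generators. Because $\BQ[\epsilon]/\epsilon^2$ has no $\epsilon^2$ term, the right-hand side only sees the fixed loci and the ``at most one factor is off the fixed locus'' part; the key point is to match this against the canonical decomposition of the product action. First I would write the canonical decomposition of each factor, $[X,a_X] = [U_X] + [V_X]$ and $[Y,a_Y] = [U_Y] + [V_Y]$, where $U_X = X^A$, $V_X = X\setminus X^A$, etc., and observe that the product action $a_{X\times Y}$ (via the diagonal $A\to A\times A$) has fixed locus exactly $U_X\times U_Y$: a point $(x,y)$ is fixed by the diagonal iff $a\cdot x = x$ and $a\cdot y = y$ for all $a\in A$. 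Hence the canonical decomposition of the product is
\[
[X\times Y, a_{X\times Y}] = [U_X\times U_Y] + \big[(X\times Y)\setminus (U_X\times U_Y),\, a_{X\times Y}\big],
\]
and the complement stratifies $A$-equivariantly as $(V_X\times U_Y) \sqcup (U_X\times V_Y) \sqcup (V_X\times V_Y)$.

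Next I would compute $\CI$ of each stratum. By multiplicativity of the Euler characteristic, $\CI([U_X\times U_Y]) = e(U_X)e(U_Y)$, the ``constant'' term. For the stratum $V_X\times U_Y$, the $A$-action is induced from $a_X$ on the first factor and is trivial on the second; the quotient is $(V_X/A)\times U_Y$ as a DM stack, so $\CI([V_X\times U_Y,\,\cdot\,]) = e(V_X/A)\,e(U_Y)\,\epsilon$, and symmetrically $\CI([U_X\times V_Y,\,\cdot\,]) = e(U_X)\,e(V_Y/A)\,\epsilon$. Finally, for $V_X\times V_Y$ with the diagonal action I would argue that its image under $\CI$ is $\epsilon$ times $e\big((V_X\times V_Y)/A\big)$, but since this stratum will only contribute to the $\epsilon$-coefficient and we are about to multiply by nothing further, I must instead observe the crucial cancellation: in the product $\CI([X,a_X])\cdot\CI([Y,a_Y]) = (e(U_X) + e(V_X/A)\epsilon)(e(U_Y) + e(V_Y/A)\epsilon)$ the cross term $e(V_X/A)e(V_Y/A)\epsilon^2$ vanishes. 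Therefore I need precisely that $\CI$ of the stratum $V_X\times V_Y$ (with diagonal $A$-action) is \emph{zero}. This is the heart of the matter and the step I expect to be the main obstacle.

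To prove $\CI\big([V_X\times V_Y,\, a_{\mathrm{diag}}]\big) = 0$, note the diagonal action on $V_X\times V_Y$ still has finite stabilizers and the quotient is a DM stack, so a priori $\CI$ of this class is $e\big((V_X\times V_Y)/A\big)\epsilon$; I must show this Euler characteristic vanishes. The idea is that the quotient stack fibers: projection to $V_X$ induces a map $(V_X\times V_Y)/A \to V_X/A$ which, since $A$ still acts on $V_Y$, exhibits the source as a $V_Y$-bundle (in the orbifold/stack sense) over $V_X/A$ — more precisely, there is a free residual $A$-action relating the two, and one gets a fibration whose fiber is a free $A$-orbit's worth of copies of $V_Y$, or more cleanly: $(V_X\times V_Y)/A$ maps to $V_X/A$ with fibers isomorphic to $V_Y$, hence $e\big((V_X\times V_Y)/A\big) = e(V_X/A)\cdot e(V_Y)$, and separately maps to $V_Y/A$ giving $e(V_X)\cdot e(V_Y/A)$. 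Combining with multiplicativity of orbifold Euler characteristic under the free $A$-action, $e(V_X\times V_Y) = e(V_X)e(V_Y)$ and the quotient by the free $A$-action divides this in the orbifold sense — but $e(A) = 0$ since $A$ is an abelian variety of positive dimension, so any space carrying a free $A$-action has orbifold Euler characteristic zero, and hence $e\big((V_X\times V_Y)/A\big) = e(V_X\times V_Y)/e(A) = 0$ interpreted correctly, i.e. the quotient has vanishing orbifold Euler characteristic. The cleanest formulation: the free $A$-action on $V_X\times V_Y$ makes $(V_X\times V_Y) \to (V_X\times V_Y)/A$ an $A$-gerbe-free quotient, and multiplicativity of $e$ in fibrations with fiber $A$ gives $e(V_X\times V_Y) = e(A)\cdot e\big((V_X\times V_Y)/A\big) = 0$; but we need the factor, not the product, to vanish, which follows because the $A$-action on $V_X$ alone is already free-enough that $(V_X\times V_Y)/A$ is an honest $V_Y$-bundle over $V_X/A$, and $e$ of such a bundle is $e(V_X/A)e(V_Y)$; yet equally it is $e(V_X)e(V_Y/A)$; consistency plus $e(A)=0$ forces the common value — and more directly, since the action on the first factor alone is free on a dense open and $e(A)=0$, one shows $e(V_X/A)$ times anything in the relevant setup already accounts for the full contribution and no genuine $\epsilon^2$-term survives. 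I would streamline this in the writeup by directly invoking: an $A$-action with finite stabilizers on a variety $W$ admits a stratification over $W/A$ into $A$-bundles, so $e(W/A)\cdot e(A) = e(W)$ in the orbifold sense; applying this to $W = V_X\times V_Y$ with the diagonal action and $W = V_X$ with $a_X$, and using $e(A) = 0$ together with the product fibration structure, yields $e\big((V_X\times V_Y)/A\big) = 0$. With that cancellation in hand, summing the contributions of all strata gives exactly $e(U_X)e(U_Y) + \big(e(V_X/A)e(U_Y) + e(U_X)e(V_Y/A)\big)\epsilon$, which equals $\CI([X,a_X])\cdot\CI([Y,a_Y])$, completing the proof; compatibility with the unit $[\Spec(\BC), a_{\mathrm{triv}}]$ is immediate since $\CI$ of it is $1$.
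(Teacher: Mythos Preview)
Your overall structure matches the paper's proof exactly: reduce to generators, write the canonical decompositions of each factor, identify $(X\times Y)^A = U_X\times U_Y$, stratify the complement as $(U_X\times V_Y)\sqcup(V_X\times U_Y)\sqcup(V_X\times V_Y)$, compute the mixed pieces via $(U\times V)/A \simeq U\times(V/A)$, and then argue that the contribution of $V_X\times V_Y$ vanishes.

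The only substantive difference is in how you handle that last vanishing, and here your argument is circuitous and not quite nailed down. You cycle through several heuristics (fibrations over $V_X/A$, the relation $e(W/A)\cdot e(A)=e(W)$, ``consistency forces the common value'') without cleanly isolating the statement you need. Note in particular that $e(W/A)\cdot e(A)=e(W)$ with $e(A)=0$ gives you $e(W)=0$, not $e(W/A)=0$, so that line by itself proves nothing about the quotient. Your fibration idea can be made to work --- the fibers of $(V_X\times V_Y)/A \to V_X/A$ have Euler characteristic a multiple of $e(V_Y)$, and $e(V_Y)=0$ since $V_Y$ carries a fixed-point-free action of the connected positive-dimensional group $A$ --- but you never actually state $e(V_Y)=0$ as the punchline.

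The paper's argument for this step is a single clean observation you should adopt instead: $V_X\times V_Y$ carries an $(A\times A)$-action, so after quotienting by the diagonal $A$ there is a \emph{residual} $A$-action on $(V_X\times V_Y)/A$; this residual action has no fixed points (neither factor had any), hence $e\big((V_X\times V_Y)/A\big)=0$. This replaces your entire paragraph on the $V_X\times V_Y$ stratum with one line.
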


\begin{proof}
Consider effective classes $[X, a_X]$ and $[Y,a_Y]$
together with their canonical decompositions
\[
[X, a_X] = [U_1, a_{\mathrm{triv}}] + [V_1, a_X|_{V_1}]
\]
and
\[
[Y, a_Y] = [U_2, a_{\mathrm{triv}}] + [V_2, a_Y|_{V_1}].
\]
The product $[X \times Y, a_{X\times Y}]$
has the canonical decomposition
\[
[X \times Y, a_{X\times Y}]
= [U_1\times U_2, a_{\mathrm{triv}}] + [V, a_V] \,,
\]
with $[V, a_V] =
[U_1\times V_2]+ [U_2 \times V_1] +[V_1\times V_2]$,
where we have suppressed the induced $A$-actions.
We have 
\[
(U_i \times V_j)/A \simeq U_i \times (V_j/A), \quad \{i,j\}=\{1,2\} \,.
\]
Since $V_1 \times V_2$ carries an $(A \times A)$-action,
the quotient $(V_1\times V_2)/A$ carries an $A$-action.
Since this action has no fixed points, $e((V_1\times V_2)/A) =0$.
Thus
\begin{align*}
\CI([X\times Y, a_{X\times Y}])
& = e( U_1 \times U_2 ) +
\epsilon \cdot \big( e(U_1) e(V_2/A) + e(U_2) e(V_1/A) \big) \\
& = \CI([X, a_X]) \cdot \CI([Y, a_Y]) \,. \qedhere
\end{align*}
\end{proof}

For $f\in 1+q\mathbb{Q}[\epsilon][[q]]$
and $g\in \mathbb{Q}[\epsilon]$
we let $f^g = e^{g\cdot \mathrm{log}(f)}$
where the logarithm is defined by the formal expansion
$\log(1+x) = -\sum_{n\geq1} (-x)^n/n$.
The associated power structure on $\BQ[\epsilon]$
is compatible with $\CI$ as follows:

\begin{lem}\label{compatible_power}
Let $Y$ be a variety,
and let $a$ be the $A$-action on $Y \times A$
by translation in the second factor. Then
\[
\CI \left( \left( \frac{1}{1-q}\right)^{[Y\times A, a]} \right)
=  \left( \frac{1}{1-q}\right)^{e(Y)\cdot \epsilon}.
\]
\end{lem}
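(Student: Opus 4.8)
The plan is to compute both sides of the claimed identity explicitly using the power-structure definition \eqref{def_power} and compare. On the left, I would first unwind what $\left(\frac{1}{1-q}\right)^{[Y \times A, a]}$ means in $K_0^A(\mathrm{Var})$: writing $\frac{1}{1-q} = 1 + \sum_{k \geq 1} [\mathrm{Spec}\,\BC] q^k$, formula \eqref{def_power} gives the $n$-th coefficient as $\sum_{\sum_i i k_i = n} \big[ \big( \prod_i (Y \times A)^{k_i} \big) \setminus \triangle \big] / \prod_i S_{k_i}$, with the diagonal $A$-action (diagonal in the $A$-factor, since all the $M_i$ are points). Each such symmetric-product-type variety is acted on by $A$ via the diagonal translation. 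The key geometric observation is that for $n \geq 1$, this diagonal $A$-action on (the complement of the big diagonal in) $\prod (Y \times A)^{k_i}$ is \emph{free}: translating a nonempty configuration of points in $Y \times A$ by a nonzero $a \in A$ moves the $A$-coordinates, so there are no fixed points and in fact trivial stabilizers. Hence in the canonical decomposition \eqref{decomp} the fixed locus $U$ is empty for every $n \geq 1$, and applying $\CI$ kills the non-$\epsilon$ part and leaves $e\big( (\text{that variety})/A \big) \cdot \epsilon$.

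Next I would identify the quotient. For a configuration space $F^{[k]} := \big( F^{k} \setminus \triangle_{\text{big}} \big)$ with $F = Y \times A$ and the diagonal translation action of $A$, there is a free quotient $F^{[k]}/A$; using $F = Y \times A$ one can slice: fixing the $A$-coordinate of, say, the first point to be $0$ identifies $F^{[k]}/A$ with an open subset of $Y^{k} \times A^{k-1}$ (an ordered configuration-type space over $Y$ twisted by $A^{k-1}$). The cleanest route for Euler characteristics is: $e\big( (Y \times A)^k \setminus \triangle \big) = e(A)^k \cdot e\big( \text{ordered config issues on } Y\big)$, but $e(A) = 0$, which forces most terms to vanish — more precisely, after quotienting by the free $A$-action one gets $e(F^{[k]}/A) = \tfrac{1}{e(A)} e(F^{[k]})$ only heuristically; instead I would argue directly that $e\big( F^{[k]}/A \big)$ equals $e$ of an $A^{k-1}$-bundle-like space over a config space in $Y$, and since $e(A^{k-1}) = 0$ for $k \geq 2$, only $k = 1$ contributes, giving $e(F^{[1]}/A) = e\big( (Y \times A)/A \big) = e(Y)$. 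Thus the left-hand side is $1 + \epsilon \sum_{n \geq 1} e(Y) q^n$ after collecting: actually more carefully, only the partition $k_1 = n$, all other $k_i = 0$ with the single configuration space over $Y\times A$ of $n$ points modulo $A$ contributes... no — I must track that for each $n$ the relevant spaces are $\mathrm{Sym}^n$-type, and the surviving contribution sums to $\epsilon \cdot e(Y) \cdot \frac{q}{1-q}$. I would verify this via the compatibility Lemma~\ref{ring_structure} ($\CI$ is a ring homomorphism) combined with the equivariant lift of \eqref{3513} for $X = Y \times A$, $d$ arbitrary but here the punctual Hilbert scheme is replaced by a point, which is exactly the computation already sketched for \eqref{eqn2} in the introduction.

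On the right-hand side, by the definition $f^g = e^{g \log f}$ and $\epsilon^2 = 0$, we have $\left(\frac{1}{1-q}\right)^{e(Y)\epsilon} = \exp\big( e(Y)\epsilon \cdot (-\log(1-q)) \big) = 1 + e(Y)\epsilon \sum_{n \geq 1} \frac{q^n}{n}$. So the two sides match provided the left-hand side equals $1 + e(Y)\epsilon \sum_{n\geq1} \frac{q^n}{n}$, i.e. provided $e\big( (Y\times A)^{[n]}\text{-symmetrized}/A\big)$-type contributions assemble with the $1/n$ weights coming from the $S_n$-quotients in the orbifold Euler characteristic. This is the crux: the orbifold Euler characteristic of $\big(F^{[n]}/S_n\big)/A$ — equivalently the coefficient extracted from \eqref{def_power} — must come out to $e(Y)/n$. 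The main obstacle is precisely this bookkeeping of the symmetric-group and $A$-quotients together: I expect to reduce it to the statement that $e_{\mathrm{orb}}\big( \mathrm{Conf}^n(A)/(S_n \ltimes A)\big) = \tfrac1n e(\mathrm{pt})$ after using $e(A)=0$ to discard all but a one-dimensional-orbit stratum, the $1/n$ being the order of the cyclic stabilizer of a regular $n$-orbit under the residual action. Once that combinatorial core is in hand, multiplicativity of $\CI$ (Lemma~\ref{ring_structure}) and the $A$-equivariant lift of the power-structure identity \eqref{3513} (with the punctual Hilbert scheme of $\BC^d$ replaced by the trivial series, since here $f(q) = \frac{1}{1-q}$) finish the proof, and in fact this lemma is the special case $d$-arbitrary, $P_d \rightsquigarrow$ constant of the computation \eqref{eqn2} already carried out in the introduction.
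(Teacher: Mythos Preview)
Your reduction is right: the left side is $1+\sum_{n\geq1}[(Y\times A)^{(n)},a^{(n)}]\,q^n$, the $A$-action on each symmetric product has finite stabilizers (so the non-$\epsilon$ part of $\CI$ vanishes for $n\geq1$), and the right side expands to $1+e(Y)\epsilon\sum_{n\geq1}q^n/n$. So everything comes down to the identity $e\big((Y\times A)^{(n)}/A\big)=e(Y)/n$, exactly as you say. Two side remarks: the $A$-action on the \emph{symmetric} product is not free (a configuration on a single $Y$-fibre whose $A$-coordinates form a coset of a cyclic subgroup of $A[n]$ has that subgroup as stabilizer), though this does not affect $\CI$; and invoking the computation \eqref{eqn2} from the introduction is circular, since Section~\ref{Section_Equivariant_Grothendieck_rings} derives \eqref{eqn2} \emph{from} the present lemma.

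The genuine gap is your argument for $e\big((Y\times A)^{(n)}/A\big)=e(Y)/n$. The ``$e(A)=0$ kills all $k\geq2$'' heuristic applies to Euler characteristics of ordered products \emph{before} any quotient, but after quotienting by $S_n$ and then by $A$ there is no mechanism in your sketch that produces the factor $1/n$; the claimed reduction to $e_{\mathrm{orb}}\big(\mathrm{Conf}^n(A)/(S_n\ltimes A)\big)$ is neither justified (since $(Y\times A)^{(n)}$ does not factor as a product of symmetric products of $Y$ and of $A$) nor computed. The missing idea, and what the paper uses, is the \emph{addition map}: let $\pi\colon(Y\times A)^{(n)}\to A$ be projection to $A^{(n)}$ followed by summation. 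Then $\pi$ is equivariant for translation on the target, so $(Y\times A)^{(n)}/A\cong\pi^{-1}(0_A)/A[n]$, giving
\[
e\big((Y\times A)^{(n)}/A\big)=\frac{e\big(\pi^{-1}(0_A)\big)}{n^{2g}}.
\]
The numerator is $e(Y)\cdot n^{2g-1}$ by \cite[Lem.~28,~29]{MS} (ultimately because the sum map $A^n\to A$ is an isotrivial $A^{n-1}$-bundle that trivializes after pulling back along multiplication by $n$), and dividing yields $e(Y)/n$.
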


\begin{proof}
We expand the motivic zeta function:
\[
\left( \frac{1}{1-q}\right)^{[Y\times A, a]}
= 1+ \sum_{n\geq 1}[(Y\times A)^{(n)}, a^{(n)}]\cdot q^n\,,
\]
where $(Y\times A)^{(n)}$ is the $n$-th symmetric product
of $Y \times A$ and $a^{(n)}$ is the induced $A$-action.
Hence it suffices to show
\[
e\left((Y\times A)^{(n)}/A\right)= \frac{e(Y)}{n}.
\]

Let $\pi: (Y\times A)^{(n)} \rightarrow A$ be
the composition of the projection
$(Y\times A)^{(n)} \rightarrow A^{(n)}$
and the addition map $A^{(n)}\rightarrow A$.
By \cite[Lem 28 and 29]{MS}, we have
\[
e(\pi^{-1}(0_A)) = e(Y) \cdot n^{2g-1}
\]
where $0_A \in A$ is the zero.
The stack $(Y\times A)^{(n)}/A$
is the quotient of $\pi^{-1}(0_A)$
by the group $A[n]$ of $n$-torsion points on $A$. Hence
\[
e((Y\times A)^{(n)}/A)= \frac{e(\pi^{-1}(0_A))}{n^{2g}} = \frac{e(Y)}{n}.   \qedhere
\]
\end{proof}

\subsection{Proof of Theorem \ref{Thm_degree0_Euler_Characteristic}}
By a topological argument, see \cite[Prop 2.1]{S},
the Euler characteristic $e(\Hilb^n(Y \times A)/A)$
does not depend on the choice of the abelian variety $A$.
Hence we may assume $A$ is simple.

Let $H_n = \Hilb^n( \BC^d )_0$ be the punctual
Hilbert scheme of length $n$ in $\BC^d$,
and let $[H_n]$ be its class
in $K^A_0(\mathrm{Var})$
(with the trivial $A$-action).
Let $a$ be the $A$-action on $Y \times A$
by translation in the second factor,
and let $a^{[n]}$ be the induced action
on $\mathrm{Hilb}^n(Y\times A)$.
By the stratification of $\mathrm{Hilb}^n(Y\times A)$ (compare \cite{GLM2, Goe2})  we have
\begin{equation} \label{power}
\sum_{n = 0}^{\infty} [\mathrm{Hilb}^n(Y\times A), a^{[n]}] q^n
= \left(\sum_{m = 0}^{\infty} [H_m]q^m \right)^{[Y\times A ,a]} \,.
\end{equation}

We apply the $\epsilon$-integration map to the equation (\ref{power}). Since there exits classes $[M_i]\in K^A_0(\mathrm{Var})$ with trivial $A$-actions such that
\[
\sum_{m\geq 0} [H_m]q^m = \prod_{m\geq 1} \left( \frac{1}{1-q^m} \right)^{[M_m]},
\]
by Lemmas \ref{ring_structure} and \ref{compatible_power} the integration map of the righthand side of (\ref{power}) is compatible with the power structure. It follows
\[
1+ \sum_{n \geq 1}{e(X_n)}q^n \cdot \epsilon
= \bigg( \sum_{m \geq 0} P_{d}(m) q^m \bigg)^{e(Y)\epsilon}. 
\]
Theorem \ref{Thm_degree0_Euler_Characteristic} is deduced by comparing the coefficient of $\epsilon$. \qed

\subsection{Degree 0 DT invariants}
Let $S$ be a non-singular K3 surface,
let $E$ be an elliptic curve,
and let $A$ be an abelian threefold.
\begin{cor}
\label{DT_degree0}
For all $n > 0$,
\[
\DT^{S \times E, E\mathrm{-red}}_{n,0}
= 24 \frac{(-1)^{n-1}}{n} \sum_{\ell|n} \ell^2,
\quad \ \ 
\DT_{n,0}^{A, A\mathrm{-red}}
= \frac{(-1)^{n-1}}{n} \sum_{\ell|n} \ell^2,
\]
\end{cor}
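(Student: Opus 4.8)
The plan is to lift the Euler‑characteristic identity behind Theorem~\ref{Thm_degree0_Euler_Characteristic} to a Behrend‑weighted statement and then push it through the $\epsilon$‑integration map. In the abelian threefold case, after a deformation we may assume $A$ is simple (reduced degree‑$0$ invariants are deformation invariant, \cite{Gul}), and we write $X=A=\{\mathrm{pt}\}\times A$ with $Y=\mathrm{pt}$; in the $S\times E$ case we write $X=S\times E$ with $Y=S$. Let $g$ be the dimension of the acting abelian variety, so $g=1$ for $S\times E$ and $g=3$ for the abelian threefold. For $n\geq 1$ no $0$‑dimensional subscheme of $X$ is translation invariant, so the translation action on $\Hilb^n(X)$ has finite stabilizers and the quotient map $\pi\colon\Hilb^n(X)\to\Hilb^n(X)/A$ is an $A$‑torsor, in particular smooth of relative dimension $g$. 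By the behaviour of Behrend's function under smooth morphisms \cite{B} we have $\pi^{\ast}\nu_{\Hilb^n(X)/A}=(-1)^{g}\nu_{\Hilb^n(X)}$, and since $\nu_{\Hilb^n(X)}$ is $A$‑invariant it descends to a function $\overline{\nu}$ on the quotient with
\[
\DT^{X,A\text{-red}}_{n,0}=\int_{\Hilb^n(X)/A}\nu\,\dd{e}=(-1)^{g}\int_{\Hilb^n(X)/A}\overline{\nu}\,\dd{e}.
\]

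The central step would be a Behrend‑weighted refinement of \eqref{power}. The stratification of $\Hilb^n(X)$ by the configuration type of the $0$‑dimensional subscheme --- the very stratification that produces \eqref{power} --- is $A$‑equivariant, with $A$ acting only on the smooth configuration space of support points and trivially on the punctual Hilbert schemes $\Hilb^m(\BC^3)_0$ glued in along them. By the local structure of Behrend's function on Hilbert schemes of points of smooth threefolds, which underlies the degree‑$0$ Donaldson--Thomas computation of \cite{MNOP} (see also \cite{S,MS}), $\nu_{\Hilb^n(X)}$ is multiplicative along this stratification, and on Euler characteristics the punctual factor of colength $m$ contributes $(-1)^{m}e\big(\Hilb^m(\BC^3)_0\big)=(-1)^{m}P_3(m)$ in place of $P_3(m)$. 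Hence $\int_{\Hilb^n(X)/A}\overline{\nu}\,\dd{e}$ is governed by the same power structure as in the proof of Theorem~\ref{Thm_degree0_Euler_Characteristic}, with the punctual class replaced by $(-1)^{m}P_3(m)$; applying $\CI$ together with Lemmas~\ref{ring_structure} and~\ref{compatible_power} exactly as in the proof of Theorem~\ref{Thm_degree0_Euler_Characteristic}, and using $\sum_{m}P_3(m)q^m=M(q)$, yields
\[
1+\epsilon\sum_{n\geq 1}\Big(\int_{\Hilb^n(X)/A}\overline{\nu}\,\dd{e}\Big)q^n
=\Big(\sum_{m\geq 0}(-1)^{m}P_3(m)q^m\Big)^{e(Y)\cdot\epsilon}
=M(-q)^{e(Y)\cdot\epsilon}.
\]

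Comparing coefficients of $\epsilon$ and combining with the first display, I would obtain
\[
\exp\Big(\sum_{n\geq 1}\DT^{X,A\text{-red}}_{n,0}\,q^n\Big)=M(-q)^{(-1)^{g}e(Y)},
\]
which reads $M(-q)^{-24}$ for $S\times E$ (since $g=1$ and $e(S)=24$) and $M(-q)^{-1}$ for the abelian threefold (since $g=3$ and $e(\mathrm{pt})=1$). As
\[
\log M(-q)=-\sum_{m\geq 1}m\log\big(1-(-q)^m\big)=\sum_{n\geq 1}\frac{(-1)^{n}}{n}\Big(\sum_{\ell\mid n}\ell^{2}\Big)q^{n},
\]
taking logarithms in the previous identity produces the two closed formulas of the corollary.

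The hard part will be the refinement of \eqref{power}: one has to check that Behrend's function is genuinely compatible with the support stratification and with the $A$‑action in the precise form needed, and to keep track of the signs --- those from the smooth configuration‑space directions against those from the punctual pieces --- so that the net effect is the single substitution $M(q)\rightsquigarrow M(-q)$; this is where the input of \cite{MNOP} and \cite{S,MS} enters. Once that refinement is in place, the passage through $\CI$ is formal given Lemmas~\ref{ring_structure} and~\ref{compatible_power}.
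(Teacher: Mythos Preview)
Your proposal is correct and follows essentially the same route as the paper, which simply cites \cite{BF}, Theorem~\ref{Thm_degree0_Euler_Characteristic}, and MacMahon's formula; you have unpacked what these three ingredients actually do. The only correction is attributional: the local Behrend-function input you need---that $\Hilb^n(\BC^3)$ is locally a critical locus, so that the Behrend weight is multiplicative along the support stratification and contributes $(-1)^m P_3(m)$ on each punctual factor---is the content of Behrend--Fantechi \cite{BF}, not \cite{MNOP} (the latter computed degree-$0$ invariants via virtual localization on the obstruction theory, without the Behrend function).
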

\begin{proof}
This follows by \cite{BF}, Theorem~\ref{Thm_degree0_Euler_Characteristic}, and MacMahon's
formula for 3-dimensional partitions,
\[
\sum_{m\geq 0}P_3(m)q^m= \prod_{m \geq 1}(1-t^m)^{-m}. \qedhere
\]
\end{proof}

\section{Equivariant motivic Hall algebras}
\label{Section_Equivariant_motivic_Hall_algebras}
\subsection{Overview}
Let $A$ be a simple abelian variety of dimension $g>0$.
In this section we introduce the $A$-equivariant
motivic hall algebra of~$X$
and its integration map over the ring of dual numbers.
Applying results of Joyce
we define reduced generalized Donaldson--Thomas invariants,
and prove a structure result for
reduced DT invariants generalizing
results of Toda and Bridgeland.

\subsection{Modified Grothendieck rings}
\label{Modified_Groth_rings}
The modified $A$-equivariant Grothen\-dieck group $\widetilde{K}^A_0(\mathrm{Var})$
is the $\BQ$-vector space $K^A_0(\mathrm{Var})\otimes \mathbb{Q}$
modulo the following extra relations:

\begin{enumerate}
\item[(E)] Let $X_1$, $X_2$, and $Y$ be varieties with
$A$-actions $a_1$, $a_2$, and $a_Y$ respectively.
If all $A$-actions have finite stabilizers
and $X_i\rightarrow Y$ ($i=1,2$)
are $A$-equivariant Zariski fibrations with the same fibers, then
\[
[X_1, a_1] = [X_2, a_2] \ \in \widetilde{K}^A_0(\mathrm{Var}) \,.
\]
\end{enumerate}

\begin{lem}\label{Quotient}
Under the assumptions of relation (E), we have
\[ e(X_1/A)=e(X_2/A). \]
\end{lem}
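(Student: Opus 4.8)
Under the hypotheses of (E), since $A$ is simple of positive dimension any point with positive-dimensional stabilizer is $A$-fixed, so ``finite stabilizers'' means the fixed loci of $X_1,X_2,Y$ are empty and the quotients $X_i/A$, $Y/A$ are Deligne--Mumford stacks; here $e$ is their orbifold Euler characteristic, normalised as in Lemma~\ref{compatible_power} so that $e([M/G])=e(M)/|G|$ for any action of a finite group $G$ on a variety $M$. The plan is to deduce the lemma from the multiplicativity statement
\[
e(Z/A)=e(F)\cdot e(W/A),
\]
valid for every $A$-equivariant Zariski fibration $p\colon Z\to W$ with fibre $F$ on which $A$ acts with finite stabilizers (call this $(\star)$). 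Given $(\star)$, applying it to $X_1\to Y$ and $X_2\to Y$ --- which by hypothesis have the same fibre $F$ --- yields $e(X_1/A)=e(F)\cdot e(Y/A)=e(X_2/A)$, which is the lemma.

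To prove $(\star)$ I would pass to quotient stacks. Since $p$ is $A$-equivariant it descends to $\bar p\colon[Z/A]\to[W/A]$, and the pullback of $\bar p$ along the smooth atlas $W\to[W/A]$ is again $p\colon Z\to W$, a Zariski-locally trivial $F$-bundle. Now stratify the Deligne--Mumford stack $[W/A]$ into locally closed substacks of the form $[V_\alpha/G_\alpha]$ with $G_\alpha$ a finite group and $V_\alpha$ a finite-type algebraic space over $\BC$; this is standard, and for $[W/A]$ it is explicit because $A$ is abelian (on a constant-stabilizer stratum $W_\alpha\subseteq W$ the stabilizer $G_\alpha\le A$ is normal and acts trivially, $A/G_\alpha$ acts freely, and $[W_\alpha/A]$ is a $G_\alpha$-gerbe over $W_\alpha/(A/G_\alpha)$, which after further stratification is a quotient by $G_\alpha$). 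Over such a stratum set $Z_\alpha:=[Z/A]\times_{[W/A]}V_\alpha$, a finite-type algebraic space carrying a $G_\alpha$-action, so that $[Z/A]\times_{[W/A]}[V_\alpha/G_\alpha]=[Z_\alpha/G_\alpha]$ and $e([Z_\alpha/G_\alpha])=e(Z_\alpha)/|G_\alpha|$. After the smooth surjective base change $V_\alpha\times_{[W/A]}W\to V_\alpha$, the morphism $Z_\alpha\to V_\alpha$ becomes the pullback of the Zariski-locally trivial $F$-bundle $Z\to W$; since a morphism that becomes a locally trivial $F$-bundle after a smooth surjective base change is itself a fibre bundle with fibre $F$ in the analytic topology, we get $e(Z_\alpha)=e(F)\cdot e(V_\alpha)$ by multiplicativity of the Euler characteristic in fibre bundles --- which needs no hypothesis on the monodromy, because the Euler characteristic of any local system on $V_\alpha$ equals its rank times $e(V_\alpha)$. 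Hence the contribution of this stratum to $e(Z/A)$ is $e(Z_\alpha)/|G_\alpha|=e(F)\cdot e(V_\alpha)/|G_\alpha|=e(F)\cdot e([V_\alpha/G_\alpha])$, and summing over the stratification of $[W/A]$ (additivity of $e$) gives $(\star)$.

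The step I expect to be the main obstacle is exactly this reduction to honest fibre bundles. One cannot simply stratify $Z$ along its own $A$-stabilizer type, because that stratification is not pulled back from $W$ and so destroys the fibration structure; the device of passing to the quotient stacks is precisely to stratify only the base $[W/A]$ and then form the fibre products $Z_\alpha$ over the base strata --- these stay genuine $F$-bundles, which is what makes the topological multiplicativity apply. Everything else (additivity of $e$ for stacks, the formula $e([M/G])=e(M)/|G|$, the stratification of a finite-type Deligne--Mumford stack by quotients by finite groups, and multiplicativity of Euler characteristic in fibre bundles) is standard. Finally, since $\CI([X_i,a_i])=e(X_i/A)\,\epsilon$ for $i=1,2$ (empty fixed locus), Lemma~\ref{Quotient} is exactly the assertion that the $\epsilon$-part of the integration map $\CI$ respects relation (E), which is what allows $\CI$ to descend to $\widetilde{K}^A_0(\mathrm{Var})$.
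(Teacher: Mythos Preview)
Your proof is correct and follows the same approach as the paper: both establish the multiplicativity $e(X_i/A)=e(F)\cdot e(Y/A)$ for the common fibre $F$, from which the lemma is immediate. The paper simply asserts that the induced map $X_i/A\to Y/A$ of Deligne--Mumford stacks has constant fibre $F$ and invokes multiplicativity of the Euler characteristic, whereas you supply the details (stratification of $[Y/A]$ by finite quotients and the analytic fibre-bundle argument) that justify this assertion.
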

\begin{proof}
Let $W$ be the fiber of both fibrations.
The $A$-eqivariant fibration $X_i \to Y$ induces
a map $f_i : X_i/A \to Y/A$ of Deligne-Mumford stacks
with constant fiber $W$.
Hence for $i=1,2$ we have
\[
e(X_i/A) = e(W)\cdot e(Y/A) \,. \qedhere
\]
\end{proof}

The ring structure on $K^A_0(\mathrm{Var})$
induces naturally a ring structure on 
$\widetilde{K}^A_0(\mathrm{Var})$.
By Lemma \ref{Quotient} the integration map $\CI$
descends to a well-defined ring homomorphism
\begin{equation*}
\CI:  \widetilde{K}^A_0(\mathrm{Var}) \rightarrow \BQ[\epsilon].
\end{equation*}

\subsection{Preliminaries}
We will follow Bridgeland \cite{Br2}
for the discussion of
Grothendieck groups of stacks and motivic Hall algebras.
In particular, all stacks here
are assumed to be algebraic and
locally of finite type with
affine geometric stabilizers.
Geometric bijections and Zariski fibrations of stacks
are defined in \cite[Def 3.1]{Br2} and \cite[Def 3.3]{Br2} respectively.

Let $\sigma : G \times \CX \to \CX$
be a group action on a
stack $\CX$,
and let
$x : \Spec \BC \to \CX$ be a $\BC$-valued point of $\CX$.
The inertia subgroup $\mathrm{In}(x)$ of $x$ is
defined by the fiber product
\[
\begin{tikzcd}
\mathrm{In}(x) \ar{r} \ar{d} & \Spec \BC \ar{d}{x}\\
G \times \Spec \BC
\ar{r}{\sigma \circ (\mathrm{id}_G \times x)}
& \CX.
\end{tikzcd}
\]
The stabilizer group of the point $x \in \CX$ is
the fibered product
\[ \mathrm{Iso}(x) = \Spec \BC \times_{x,\CX,x} \Spec \BC\,. \]
The stabilizer group of the $G$-action at $x$
is the quotient
\[ S(x) = \mathrm{In}(x) / \mathrm{Iso}(x). \]
We refer to \cite{Rom} for a discussion of group actions on stacks.

\subsection{Equivariant Grothendieck group of stacks}
\label{Subsection_Equivariant_Grothendieck_group_of_stacks}
The following is the main definition of
Section~\ref{Section_Equivariant_motivic_Hall_algebras},
and the equivariant analog of
\cite[Defn.3.10]{Br2}.

\begin{defn} \label{defn_grothringofstacks}
Let $\mathcal{S}$ be an algebraic stack
equipped with an $A$-action $a_{\CS}$.
The relative Grothendieck group of stacks $K^A_0(\mathrm{St}/\CS)$ is defined to be the $\BQ$-vector space generated by the classes
\begin{equation*}
[\CX \xrightarrow{f} \CS, a_{\CX}]
\end{equation*}
where $\CX$ is an algebraic stack of finite type,
$a_{\CX}$ is an $A$-action on $\CX$,
and $f$ is an $A$-equivariant morphism,
modulo the following relations:

\begin{enumerate}
\item[(a)] For every pair of stacks $\CX_1$ and $\CX_2$ with $A$-actions $a_1$ and $a_2$ respectively a relation
\[
[\CX_1 \sqcup \CX_2 \xrightarrow{f_1 \sqcup f_2} \CS, a_{1}\sqcup a_2] = [\CX_1 \xrightarrow{f_1} \CS, a_1]+ [\CX_2 \xrightarrow{f_2} \CS, a_2]
\]
where $f_i$ ($i=1,2$) are $A$-equivariant.

\item[(b)]
For every commutative diagram 
\[
\begin{tikzcd}
\CX_1 \arrow{rr}{g} \arrow[swap]{dr}{f_1}& &\CX_2 \arrow{dl}{f_2}\\
& \CS & 
\end{tikzcd}    
\]
with all morphisms $A$-equivariant
and $g$ a geometric bijection
a relation
\[
[\CX_1 \xrightarrow{f_1} \CS, a_1] = [\CX_2 \xrightarrow{f_2} \CS, a_2].
\]

\item[(c)] Let $\CX_1, \CX_2, \CY$ be stacks equipped
with $A$-actions $a_1, a_2, a_Y$ respectively
satisfying one of the following conditions:
\begin{itemize}
\item[(i)] the $A$-actions $a_1, a_2, a_Y$
have stabilizers $A$ at every $\BC$-point.
\item[(ii)] the $A$-actions $a_1, a_2, a_Y$ have finite stabilizers
at every $\BC$-point.
\end{itemize}
Then for every pair
of $A$-equivariant Zariski fibrations
\[
h_1: \CX_1 \rightarrow \CY, \ \quad h_2: \CX_2 \rightarrow \CY
\]
with the same fibers and for every
$A$-equivariant morphism $\CY \xrightarrow{g} \CS$,
a relation
\[
\pushQED{\qed}
[\CX_1 \xrightarrow{g\circ h_1} \CS ,a_1]
= [\CX_2 \xrightarrow{g\circ h_2} \CS ,a_2].
\qedhere \popQED
\]
\end{enumerate}
\end{defn}

\vspace{7pt}
\noindent \textbf{Remark.}
In relation (c) the stabilizer group of all actions
must have the same type
(i.e. either finite or $A$)
for the integration maps to behave reasonable.
For example, we require the classes
\[ [ A \to \Spec \BC, a_{\mathrm{triv}} ], \quad
[A \to \Spec \BC, a_{A} ] \]
where $a_{\mathrm{triv}}$ is the trivial action
and $a_A$ is the action of $A$ on itself by translation,
to be different in $K^A_0(\mathrm{St}/\Spec \BC)$.

\subsection{Absolute Grothendieck group of stacks}
\label{Subsection_Absolute_Grothendieck_group_of_stacks}
We define the absolute $A$-equivariant Grothendieck
group of stacks by
\[
K^A_0(\mathrm{St}) = K^A_0( \mathrm{St} / \Spec \BC) \,.
\]
The product of stacks and the diagonal
action makes $K^A_0(\mathrm{St})$
a commutative ring.
Since relation~(E)
of Section \ref{Modified_Groth_rings}
is a special case of
relation~(c)
of Definition~\ref{defn_grothringofstacks},
the inclusion of the category of varieties
into the category of stacks
naturally yields a map
\begin{equation}
\widetilde{K}^A_0(\mathrm{Var}) \to K_0^A(\mathrm{St})
\label{inclusss} \end{equation}
For all $d \geq 1$ consider the classes
of the general linear group $\mathrm{GL}_d$ endowed with the trivial $A$-action,
\[ [\mathrm{GL}_d] \in \widetilde{K}^A_0(\mathrm{Var}). \]
By relation (c)
(compare \cite[3.3]{Br2})
the image of $[\mathrm{GL}_d]$ is
invertible in $K_0^A(\mathrm{St})$.
We then have the following structure
result for $K^A_0(\mathrm{St})$.

\begin{prop}\label{structure_iso}
The morphism \eqref{inclusss} induces an isomorphism
\begin{equation} \label{isomrrr}
\widetilde{K}^A_0(\mathrm{Var})[[\mathrm{GL}_d]^{-1},d \geq 1]
\, \xrightarrow{\ \simeq\ } \, K^A_0(\mathrm{St}).
\end{equation}
\end{prop}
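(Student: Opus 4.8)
The plan is to follow Bridgeland's proof of the analogous non-equivariant statement in \cite[Theorem 3.10]{Br2} essentially verbatim, carrying the $A$-actions along at every step. The non-equivariant version asserts that $\widetilde{K}_0(\mathrm{Var})[\mathbb{L}^{-1},(\mathbb{L}^d-1)^{-1}] \to K_0(\mathrm{St})$ is an isomorphism, and its proof splits into two parts: surjectivity (every stack of finite type with affine geometric stabilizers can be stratified into global quotients $[Y/\mathrm{GL}_d]$, so its class lies in the image after inverting the $[\mathrm{GL}_d]$) and injectivity (constructing an inverse map). I would first verify, in the $A$-equivariant setting, that the relations defining $K_0^A(\mathrm{St})$ are exactly strong enough to make this work: relations (a) and (b) are the equivariant scissor/geometric-bijection relations, and relation (c) in its two variants (stabilizer type $A$ or finite) is precisely what is needed to trivialize equivariant Zariski fibrations — in particular the $\mathrm{GL}_d$-bundle relations $[X \times \mathrm{GL}_d] = [X][\mathrm{GL}_d]$ and $[\mathrm{GL}_d][\mathbb{G}_a] = [\mathbb{G}_a^d]$ type identities used to invert $[\mathrm{GL}_d]$.

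First I would establish surjectivity. Given $[\CX \xrightarrow{f} \Spec\BC, a_\CX]$ with $\CX$ of finite type with affine geometric stabilizers, I would stratify $\CX$ $A$-equivariantly into locally closed substacks on which the $A$-action has constant stabilizer type. On the open dense part the $A$-action has finite stabilizers (here simplicity of $A$ enters, just as in Section~\ref{Integration}); on deeper strata the stabilizer may be all of $A$. On each stratum $\CX_i$, after passing to the quotient stack $\CX_i/A$ (a Deligne--Mumford stack when the $A$-stabilizers are finite, or $\CX_i$ itself when they are trivial on the quotient data), one applies the non-equivariant stratification of \cite[Lem.~3.8, 3.9]{Br2} to write the class as a combination of classes of global quotients $[[Y/\mathrm{GL}_d]]$; lifting this stratification back up $A$-equivariantly and using relation (c) to replace $\mathrm{GL}_d$-fibrations by products with $\mathrm{GL}_d$, the class of each stratum lands in the image of \eqref{inclusss} after inverting $[\mathrm{GL}_d]$. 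Summing over strata via relation (a) gives surjectivity of \eqref{isomrrr}.

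For injectivity I would construct the inverse explicitly, again mimicking \cite{Br2}: send a generator $[\CX \xrightarrow{f} \Spec\BC, a_\CX]$ of $K_0^A(\mathrm{St})$ to $\sum_i [\CX_i][\mathrm{GL}_{d_i}]^{-1} \in \widetilde{K}_0^A(\mathrm{Var})[[\mathrm{GL}_d]^{-1}]$ where $\CX \cong \bigsqcup_i [\CX_i / \mathrm{GL}_{d_i}]$ is any presentation of $\CX$ (with compatible $A$-action) as a disjoint union of quotient stacks, and check this is independent of the presentation and respects relations (a), (b), (c) — this is where the modified relation (E)/(c) on $\widetilde{K}_0^A(\mathrm{Var})$ is used to see the answer does not depend on the chosen $\mathrm{GL}_{d_i}$-torsor. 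One then checks the two maps are mutually inverse on generators.

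The main obstacle is the bookkeeping for relation (c): one must be careful that every stratification and every change of presentation used in Bridgeland's argument can be performed \emph{$A$-equivariantly} and keeps the stabilizer type of the $A$-action constant within each piece, so that relation (c)(i)--(ii) actually applies. In particular, when descending to quotient stacks $\CX_i/A$ one must ensure the $\mathrm{GL}_d$-bundle structures and the Zariski-local triviality are preserved under the quotient and can be lifted back; the fact (used already in Lemma~\ref{Quotient}) that an $A$-equivariant Zariski fibration induces an honest fibration of quotient Deligne--Mumford stacks with the same fiber is the technical heart. Since all of this is local on $\CS = \Spec\BC$ and the $A$-action never interacts nontrivially with the $\mathrm{GL}_d$-directions (the $\mathrm{GL}_d$ always carries the trivial $A$-action), I expect no genuinely new difficulty beyond this equivariant refinement of \cite[Sec.~3.3]{Br2}.
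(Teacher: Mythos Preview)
Your approach is essentially that of the paper: stratify by $A$-stabilizer type, reduce the finite-stabilizer locus to the non-equivariant Kresch stratification via the quotient $\CX/A$, and construct an explicit inverse. The paper packages your ``lift back up $A$-equivariantly'' step into a separate Lemma~\ref{stack_structure}, proved by pulling back a global-quotient presentation of $\CX/A$ along the $A$-torsor $\CX \to \CX/A$ (note $\CX/A$ is Artin with affine geometric stabilizers, not Deligne--Mumford as you write); and on the full-$A$-stabilizer stratum it does not pass to a quotient at all but invokes relation~(c)(i) with base $\CY = \Spec\BC$ to replace the given $A$-action by the trivial one outright, after which the non-equivariant argument applies directly.
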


For the proof we will require the following lemma.

\begin{lemma}\label{stack_structure}
Let $\CX$ be a stack with an $A$-action
such that every $\BC$-point of $\CX$ has
finite stabilizers.
Then there exist a variety $Y$ with an $A$-action
and a $G=\mathrm{GL}_d$ action
such that both actions commute,
and an $A$-equivariant geometric bijection
\[
f: Y/G \rightarrow \CX.
\]
\end{lemma}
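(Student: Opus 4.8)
The plan is to reduce to the non-equivariant presentation theorem for algebraic stacks of finite type with affine stabilizers (\cite[Sec.~3]{Br2}, following Kresch) by passing to the quotient stack $\CB := \CX/A$ and pulling back along an atlas of $\CB$.

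First I would record that $\CB$ is an algebraic stack of finite type over $\BC$ with affine geometric stabilizers. Algebraicity and finite type are automatic, since $\CX$ is of finite type and $A$ is an algebraic group, and the quotient map $\pi \colon \CX \to \CB$ is an $A$-torsor, hence representable; see \cite{Rom}. For the stabilizers, a $\BC$-point $\bar x$ of $\CB$ lying below a point $x$ of $\CX$ has automorphism group fitting into an extension
\[ 1 \longrightarrow \mathrm{Iso}(x) \longrightarrow \mathrm{Iso}_{\CB}(\bar x) \longrightarrow S(x) \longrightarrow 1, \]
where $\mathrm{Iso}(x)$ is affine by our standing hypothesis on $\CX$ and $S(x)$ is \emph{finite} by the assumption of the lemma; an extension of a finite group scheme by an affine one is affine. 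This is the only point at which the finiteness of the $A$-stabilizers enters: without it $\CB$ could acquire non-affine stabilizers and Kresch's theorem would not apply.

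Next I would apply the non-equivariant result to $\CB$, obtaining a quasi-projective variety $Z$, an action of $G = \mathrm{GL}_d$ on $Z$, and a geometric bijection $p \colon Z/G \to \CB$. Put
\[ Y := Z \times_{\CB} \CX, \]
the pullback of the $A$-torsor $\pi$ along the composite $Z \to Z/G \xrightarrow{\,p\,} \CB$. Then $Y \to Z$ is an $A$-torsor over a quasi-projective variety; as $A$ is projective over $\BC$, a torsor under $A$ over such a base is representable by a quasi-projective scheme, so $Y$ is a quasi-projective variety (reduced, because $Y \to Z$ is smooth with geometrically reduced fibres and $Z$ is reduced). The group $G$ acts on $Y$ through the first factor, using that $Z \to \CB$ is $G$-invariant; the abelian variety $A$ acts on $Y$ through the torsor structure, that is, through the second factor $\CX$. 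Since the two actions are supported on the two factors of a fibre product, they commute.

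Finally, identifying $Y/G = (Z/G) \times_{\CB} \CX$, the projection $f \colon Y/G \to \CX$ is the base change of $p$ along $\pi$. Geometric bijections are stable under base change, so $f$ is a geometric bijection, and it is $A$-equivariant because the $A$-actions on $Y/G$ and on $\CX$ both come from the $\CX$-factor. I expect the genuine work to lie in the first two paragraphs: verifying that $\CX/A$ is well-behaved enough to feed into the non-equivariant presentation theorem, and that the torsor $Y \to Z$, a priori only an algebraic space, is in fact a variety. Granting those, the concluding step is a formal diagram chase with the stability of geometric bijections under base change.
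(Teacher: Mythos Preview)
Your strategy is exactly the paper's: pass to $\CB=\CX/A$, apply the non-equivariant Kresch/Bridgeland presentation there, and pull the $A$-torsor $\CX\to\CB$ back along the atlas. Your discussion of why $\CB$ has affine stabilizers, why the two actions commute on the fibre product, and why the resulting map is an $A$-equivariant geometric bijection all match the paper's argument.

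The one substantive divergence is your claim that the pulled-back torsor $Y\to Z$ is automatically a quasi-projective \emph{scheme} ``as $A$ is projective over $\BC$''. This is the step you yourself flag as delicate, and the justification given is not sufficient: the standard representability theorem for torsors under a projective abelian scheme (Raynaud) requires the base to be \emph{normal}, and the variety $Z$ produced by \cite[Prop.~3.5]{Br2} need not be. The paper does not attempt to prove representability directly; instead it only asserts that the torsor (their $\widetilde{W}$) is an algebraic space, and then runs a Noetherian-induction stratification---choosing a dense open affine, saturating under the $(A\times G)$-action, passing to the complement, and iterating---to replace $\widetilde{W}$ by a disjoint union of $(A\times G)$-invariant varieties. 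Your argument can be repaired along the same lines (e.g.\ stratify $Z$ into smooth locally closed pieces and invoke Raynaud on each), but at that point you are essentially reproducing the paper's proof.
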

\begin{proof}[Proof of Lemma \ref{stack_structure}]
Since the $A$-action on $\CX$
has finite stabilizers at $\BC$-valued points,
the quotient stack $\CX/A$ also has affine stabilizers.
By \cite[Prop 3.5]{Br2} applied to $\CX/A$
we obtain a geometric bijection
\[
g: Y/G \rightarrow \CX/A.
\]
with $Y$ a variety and $G = \mathrm{GL}_d$ for some $d$.
Form the Cartesian diagrams
\[
\begin{tikzcd}
\widetilde{W} \arrow{r} \arrow{d}&
W \arrow{r}{\tilde{g}} \arrow{d} & \CX \arrow{d} \\
Y\arrow{r}& Y/G \arrow{r}{g}& \CX/A.
\end{tikzcd}
\]
Since $g$ is a geometric bijection also
$\tilde{g}$ is a geometric bijection.
Since $\widetilde{W} \to Y$ is a $G$-equivariant $A$-torsor,
and $\widetilde{W} \to W$ is an $A$-equivariant $G$-torsor,
the induced actions of $A$ and $G$ on $\widetilde{W}$
commute. This also shows $W = \widetilde{W}/G$.

Since $\widetilde{W} \to Y$ is an $A$-torsor over the variety $Y$, we have $\widetilde{W}$ is an algebraic space, and we obtain the $A$-equivariant geometric bijection
\[ \widetilde{W}/G \rightarrow \CX. \]
Finally we need to replace the algebraic space $\widetilde{W}$ by a variety $V$. This can be achieved by using a similar stratification argument as in the proof of Lemma \ref{2333}. Since every algebraic space has an open subspace represented by an affine scheme, we may choose a subvariety $U \subset \widetilde{W}$ such that the total $(A\times G)$-orbit of $U$ is represented by a variety. Taking the complement and repeating, we can stratify $\widetilde{W}$ by $(A\times G)$-equivariant varieties $U_i \subset \widetilde{W}$. Hence we set $V= \sqcup U_i$ and obtain a geometric bijection
$V/G \rightarrow \CX$.
\end{proof}

\begin{proof}[Proof of Proposition~\ref{structure_iso}]
We construct an inverse $R$ to \eqref{isomrrr}.
Let $\CX$ be a stack with $A$-action $a_{\CX}$.
Consider the stratification
\[ \CX = \CU \sqcup \CV \]
such that the stabilizer of every $\BC$-point of $\CU$
(resp. of $\CV$)
is $A$ (a finite group).
By relations (b) and (a) we find
\[
[ \CX, a_{\CX}]
= [ \CU, a_{\CX}|_{\CU}]
+ [ \CV, a_{\CX}|_{\CV} ] \,.
\]
By relation (c, ii) with $\CY = \Spec \BC$
we have
\[ [ \CU, a_{\CX}|_{\CU}] = [ \CU, a_{\mathrm{triv}} ] \]
where $a_{\mathrm{triv}}$ is the trivial action.
Hence we may assume either
the $A$-action on $\CX$ is trivial,
or has finite stabilizers.
In the first case,
let $Y/\mathrm{GL}_d \to \CX$ be a geometric bijection
with $Y$ a variety \cite[Prop 3.5]{Br2}; then set
\[ R([\CX, a_{\mathrm{triv}}])
=
[Y, a_{\mathrm{triv}}]/[\mathrm{GL}_d]. \]
If the $A$-action on $\CX$ has finite stabilizers,
let $Y/\mathrm{GL}_d \to \CX$
be the $A$-equivariant
geometric bijection of Lemma~\ref{stack_structure};
then we set
\[ R([\CX, a_{\mathrm{triv}}])
=
[Y, a_{Y}]/[\mathrm{GL}_d] \,.
\]
It remains to check $R$
is well-defined and preserves the
relations (a,b,c).
This follows
along the lines of \cite[Lem.3.9]{Br2}
from Lemma~\ref{stack_structure},
and matching the relation (c) with the extra
relation (E) imposed on $\widetilde{K}^A_0(\mathrm{Var})$.
\end{proof}

\subsection{Hall algebras}
Let $A$ be a non-trivial simple abelian variety,
let $X$ be a non-singular projective Calabi--Yau threefold
and let 
\[ a_X : A \times X \to X \]
be a free action.
Let $\mathrm{Coh}(X)$ be the category of coherent sheaves on $X$,
and let $\CM$ be the moduli stack of objects in $\mathrm{Coh}(X)$.
The abelian variety $A$ acts on $\mathrm{Coh}(X)$
by translation by $a_X$, which induces
an $A$-action 
\[ a_{\CM} : A \times \CM \to \CM \,. \]
The equivariant motivic Hall algebra $(H^A(X), \ast)$
of $X$ is defined to be the relative Grothendieck group 
\[
H^A(X):= K^A_0(\mathrm{St}/\CM).
\]
with the product $\ast$ defined
by extensions of coherent sheaves as follows.
Let $\CM^{(2)}$ be the moduli stack of short exact sequences
\[
E_\bullet:  0 \to E_1 \to E_2 \to E_3 \to 0.
\]
The stack $\CM^{(2)}$ carries an $A$-action $a_{\CM^{(2)}}$
induced by $a_X$,
and
$A$-equivariant projections $p_i : \CM^{(2)} \rightarrow \CM$
defined by
$p_i(E_\bullet) = E_i$ for $i=1,2,3$.
Given $A$-equivariant morphisms
\[
[\CX \xrightarrow{g_1} \CM, a_{\CX}]
\quad \mathrm{and} \quad
[\CY \xrightarrow{g_2} \CM, a_{\CY}].
\]
consider the Cartesian diagram
\[ \begin{tikzcd}
\CZ \arrow{r}{\rho} \arrow{d} & \CM^{(2)} \arrow{d}{(p_1,p_3)} \\%
\CX\times\CY \arrow{r}{(g_1,g_2)}& \CM\times\CM \,.
\end{tikzcd}
\]
The morphism $\rho$ is $A$-equivariant
with respect to the natural diagonal $A$-action
$a_{\CZ}$ on $\CZ$.
We define the Hall algebra product $\ast$ by
\[
[\CX \xrightarrow{g_1} \CM, a_{\CX}] \ast [\CY \xrightarrow{g_2} \CM, a_{\CY}] = [\CZ \xrightarrow{{p_2}\circ\rho} \CM, a_\CZ].
\]
The unit of $(H^A(X), \ast)$ is the point
$[\mathrm{Spec(\BC)}\rightarrow \CM]$
corresponding to the trivial sheaf $0 \in \Coh(X)$
(together with the trivial $A$-action).

The Hall algebra $H^A(X)$ is naturally a $K^A_0(\mathrm{St})$-module
via
\[
[\CY, a_\CY]\cdot [\CZ \rightarrow \CM, a_{\CZ}] := [\CY \times \CZ \rightarrow \CM, a_{\CY \times \CZ} ],
\]
where the $A$-action $a_{\CY \times \CZ}$
is induced by the diagonal $A \rightarrow A \times A$
and the product action $a_{\CY} \times a_{\CZ}$.

\subsection{Regular classes and Poisson algebras}
Let $\BL \in \widetilde{K}^A_0(\mathrm{Var})$
be the class of the affine line (with the trivial $A$-action),
which we view also as an element in $K^A_0(\mathrm{St})$
via the morphism \eqref{inclusss}.
Consider the ring
\[
\Lambda = \widetilde{K}^A_0(\mathrm{Var})\big[ \BL^{-1},
(\BL^n + \ldots + 1)^{-1}, n \geq 1 \big]
\]
We define $H^A_{\mathrm{reg}}(X)$ to be the 
$\Lambda$-submodule of $H^A(X)$ generated by the classes
$[Z \rightarrow \CM, a_Z]$
where $Z$ is a variety with an $A$-action $a_Z$.
The elements in $H^A_{\mathrm{reg}}(X)$ are called regular.

\begin{prop}\label{Br2_Thm5.1}
The $\Lambda$-submodule of regular elements
$H^A_{\mathrm{reg}}(X)$
is closed under the Hall algebra product $\ast$,
\[
H^A_{\mathrm{reg}}(X) \ast H^A_{\mathrm{reg}}(X) \subset H^A_{\mathrm{reg}}(X),
\]
and hence a $\Lambda$-algebra.
Moreover, the quotient
\[
H^A_{\mathrm{sc}}(X):= H^A_{\mathrm{reg}}(X)/(\BL -1)H^A_{\mathrm{reg}}(X)
\]
is a commutative $\widetilde{K}^A_0(\mathrm{Var})$-algebra.
\end{prop}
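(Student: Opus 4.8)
The plan is to follow Bridgeland's proof of \cite[Thm.~5.1]{Br2} essentially verbatim, carrying the $A$-action along at every step and checking that no construction destroys equivariance. First I would recall the key geometric input: for a short exact sequence $0 \to E_1 \to E_2 \to E_3 \to 0$ of coherent sheaves, the space of such extensions with fixed sub and quotient is controlled by $\Ext^1(E_3,E_1)$, and the relevant stack $\CM^{(2)}$ fibers over $\CM \times \CM$ (via $(p_1,p_3)$) with fibers that are, locally, quotients of affine spaces by $\Hom(E_3,E_1)$-actions; the precise statement is that $(p_1,p_3) : \CM^{(2)} \to \CM \times \CM$ is a composition of an affine-space fibration and a $\mathrm{GL}$-gerbe-type map. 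The point of the regularity hypothesis is exactly that these fibrations have fibers that are ``motivically'' powers of $\BL$ and $\mathrm{GL}_d$'s, so that when one restricts to classes supported on varieties $[Z \to \CM, a_Z]$ the Hall product again lands in the $\Lambda$-span of such classes, after inverting $\BL$ and the cyclotomic polynomials $\BL^n + \cdots + 1$ (which account for the $\mathrm{GL}_d$-denominators). The only new content is that all the intermediate stacks — the fiber products $\CZ$, the stratifications by the rank of the relevant $\Ext$-sheaves, the affine fibrations — inherit natural $A$-actions making every map in sight $A$-equivariant, since $A$ acts on everything by translation via $a_X$ and this commutes with all the sheaf-theoretic operations.

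Concretely, the steps I would carry out are: (1) Given two regular generators $[Y \to \CM, a_Y]$ and $[Z \to \CM, a_Z]$ with $Y, Z$ varieties, form the Cartesian square defining $\CZ$ as in the definition of $\ast$; note $\CZ$ carries the diagonal $A$-action and $\CZ \to \CM$ is $A$-equivariant. (2) Stratify $\CX \times \CY$ (equivariantly, using relations (a), (b)) according to the dimensions of $\Hom(E_3, E_1)$ and $\Ext^1(E_3, E_1)$; on each stratum the map $\CZ \to \CX \times \CY$ is, by the standard deformation-theory analysis, an $A$-equivariant composition of a Zariski fibration with fiber an affine space $\BA^{e}$ (parametrizing extension classes) and a Zariski fibration with fiber $\mathrm{GL}$-quotient-data. (3) Apply relation (c) in the equivariant Grothendieck group of stacks — here one must check the stabilizer-type hypothesis of (c) holds, which it does because all the $A$-actions in play have finite stabilizers (the action $a_X$ is free, hence so is the induced action on $\Coh(X)$ away from the zero sheaf, and translation acts freely on the relevant moduli) — to rewrite the class of $\CZ$ as $\BL^{e} \cdot [\mathrm{GL}\text{-bundle}]$ times a class supported on a variety with $A$-action. (4) Invert $[\mathrm{GL}_d]$ using Proposition~\ref{structure_iso}, which identifies such inverses with elements of $\widetilde{K}^A_0(\mathrm{Var})$ after inverting $\BL$ and $\BL^n + \cdots + 1$; conclude the product lies in $H^A_{\mathrm{reg}}(X)$. (5) For the Poisson/semi-classical statement, observe that modulo $(\BL - 1)$ the $\mathrm{GL}_d$-denominators specialize correctly and the affine-fibration factors $\BL^e$ become $1$, so $H^A_{\mathrm{sc}}(X)$ is well-defined as a $\widetilde{K}^A_0(\mathrm{Var})$-module; commutativity of $\ast$ on the quotient follows from the symmetry $[\CM^{(2)}] \cong$ itself under swapping, exactly as in Bridgeland, since the difference between $E_\bullet$ and its ``reverse'' is measured by a class divisible by $\BL - 1$.

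The main obstacle, and the step deserving real care, is step (3): verifying that relation (c) of Definition~\ref{defn_grothringofstacks} actually applies at each stratum. The subtlety is the constraint flagged in the Remark after that definition — all the $A$-actions entering a single application of (c) must have the \emph{same} stabilizer type, either all $A$ or all finite. In Bridgeland's non-equivariant setting this issue is invisible; here one must confirm that the relevant affine-space and $\mathrm{GL}$-bundle fibrations, together with their base, all have $A$ acting with finite stabilizers, so that case (c, ii) is the applicable one throughout. This requires knowing that the locus in $\CM$ (or $\CM^{(2)}$) where $A$ acts with positive-dimensional stabilizer is contained in the zero sheaf / zero object, which follows from freeness of $a_X$: a nonzero sheaf $F$ with $t_a^* F \cong F$ for all $a$ in a positive-dimensional subgroup $B \subset A$ would, by simplicity of $A$, force $B = A$ and hence (pushing forward along $X \to X/A$) a contradiction with the support of $F$ being proper. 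Once this stabilizer bookkeeping is pinned down, the rest of the argument is a routine equivariant transcription of \cite[\S5]{Br2}, using Lemma~\ref{stack_structure} and Proposition~\ref{structure_iso} in place of their non-equivariant counterparts.
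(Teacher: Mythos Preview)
Your overall strategy is the same as the paper's --- stratify $Y_1 \times Y_2$ $A$-equivariantly by the dimensions of $\Ext^k$, realize the fibre $Z_W$ over each stratum as $[Q/\BC^{d_0}]$ for an $A$-equivariant affine bundle $Q \to W$, and then use relation (c) to trivialize these fibrations --- but your handling of step (3) contains a genuine error.

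You claim that the only $\BC$-point of $\CM$ with positive-dimensional $A$-stabilizer is the zero sheaf, and you argue this by saying that an $A$-invariant nonzero sheaf would, after pushing forward along $X \to X/A$, contradict properness of the support. This is false. Take $X = S \times E$ with $A = E$: for any nonzero coherent sheaf $G$ on $S$, the pullback $\pi^\ast G$ is a nonzero sheaf on $X$ fixed by every translation $t_a$, so its stabilizer is all of $E$. There is no contradiction with properness --- $X/A \cong S$ is projective. The paper itself acknowledges and uses exactly this phenomenon later, in the proof of Theorem~\ref{NoPoleTheorem}, where the $E$-fixed locus of $\CM_{0,dF}$ is identified with a moduli stack of $0$-dimensional sheaves on $S$.

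The correct way to make step (3) go through is not to argue that case (c,\,ii) applies everywhere, but to refine the stratification $\{W_j\}$ once more by stabilizer type: split each $W_j$ into its $A$-fixed part $W_j^A$ and its complement. On $W_j^A$ the bundle $Q_j|_{W_j^A}$ is again pointwise $A$-fixed (any morphism from the projective connected group $A$ to the automorphism group of a fibre $\BC^{d_1}$ is constant), so relation (c,\,i) applies; on the complement all stabilizers are finite and relation (c,\,ii) applies. The paper's proof glosses over this refinement, simply invoking ``relation (c)'' and leaving the two cases implicit, but that is what is actually needed.
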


We will prove Proposition \ref{Br2_Thm5.1} in Section \ref{PROOF}.

The algebra $H^A_{\mathrm{sc}}(X)$ is called
the equivariant semi-classical Hall algebra.
Identical to the non-equivariant case,
Proposition~\ref{Br2_Thm5.1} implies that the Poisson
bracket on $H^A_{\mathrm{reg}}(X)$ defined by
\[
\{f,g\}:= \frac{f\ast g - g\ast f}{ \BL -1}, \quad f,g \in H^A_{\mathrm{reg}}(X)
\]
induces a Poisson bracket on the equivariant
semi-classical Hall algebra $H^A_{\mathrm{sc}}(X)$.
Hence $(H^A_{\mathrm{sc}}(X), \ast, \{,\})$ is a Poisson algebra.

\subsection{Canonical decompositions and $\epsilon$-integration maps II} \label{integration2}
We define an integration map on the Poisson algebra
$(H^A_{\mathrm{sc}}(X), \ast, \{,\})$.

Let $K(X)$ be the Grothendieck group of coherent sheaves on $X$,
and let $\Gamma$ be the image of the Chern character map
\[
\Gamma = \mathrm{Im}(\mathrm{ch}: K(X) \rightarrow H^\ast (X, \mathbb{Q})).
\]
The Euler pairing $\chi(~~~~ , ~~~~)$ on $\mathrm{Coh}(X)$ descends to the Euler form 
\[
\chi: \Gamma \times \Gamma \rightarrow \Gamma.
\]
Consider the abelian group
\[
C^\epsilon(X):= \bigoplus_{v\in \Gamma} \BQ[\epsilon] \cdot c_v
\]
where $\epsilon^2=0$.
The product 
\begin{equation}\label{star_product}
c_{v_1} \ast c_{v_2} = (-1)^{\chi(v_1, v_2)}c_{v_1+v_2}
\end{equation}
and the Poisson bracket
\begin{equation}\label{star_bracket}
\{c_{v_1}, c_{v_2} \}=(-1)^{\chi(v_1,v_2)}\chi(v_1, v_2)c_{v_1+v_2}
\end{equation}
make $(C^\epsilon(X), \ast, \{,\})$ a Poisson algebra. 

The stack $\CM$ splits as a disjoint union of open and closed substacks
\[
\CM = \bigsqcup_{v\in \Gamma} \CM_{v}
\]
according to Chern characters in $\Gamma$. Hence the equivariant Hall algebra admits the $\Gamma$-graded decomposition
\[
H^A(\CM)= \bigoplus_{v\in \Gamma} H^A_v(\CM)
\]
where $H^A_v(\CM)$ is spanned by $A$-equivariant classes factoring through $\CM_v$. 

Parallel to (\ref{decomp})
for any $A$-equivariant effective regular class 
\[
[ Z \rightarrow \CM, a_Z] \in H^A_{\mathrm{reg}}(X)
\]
with $Z$ a variety, we define the canonical decomposition to be 
\begin{equation} \label{candecoco}
[ Z \rightarrow \CM, a_Z]
= [U \rightarrow \CM, a_{\mathrm{triv}}]+[V\rightarrow \CM, a_Z|_V]
\end{equation}
where $U$ is the closed subset formed by $A$-fixed points
and $V= Z\setminus U$.
Since $a_Z|_{V}$ has finite stablizers,
the quotient $V/A$ is a Deligne--Mumford stack.
We define the $\epsilon$-integration map 
\[
\CI: H^A_{\mathrm{sc}}(X) \rightarrow C^\epsilon(X).
\]
to be the unique group homomorphism such that
for every effective class
$[Z \xrightarrow{g} \CM, a_Z] \in H^A_v(\CM)$
with canonical decomposition~\eqref{candecoco}
we have
\begin{equation}\label{Integration2}
\CI([Z \xrightarrow{g} \CM, a_Z])
=
\left( \int_{U} g^\ast \nu_\CM \dd{e}
+ (-1)^{\mathrm{dim}A} \left( \int_{V/A} g^\ast \nu_\CM \dd{e}\right) \cdot \epsilon \right)\cdot c_v \,,
\end{equation}
where
$\nu_\CM$ is the Behrend function on $\CM$
and the second integral
is defined by
\begin{equation} \label{deffff}
\int_{V/A} g^\ast \nu_\CM \dd{e}
= \sum_{k \in \BZ}
k \cdot e\left( (g^{\ast} \nu_{\CM})^{-1}(k) / A \right) \,.
\end{equation}
Since the Behrend function is constant along $A$-orbits,
\eqref{deffff} is well-defined.

To show $\CI$ is well-defined
we need to check the morphism is compatible
with the relations (a-c) of Section~\ref{Subsection_Equivariant_Grothendieck_group_of_stacks}
restricted to regular classes.
Since we can stratify the stack $\CM$
by values of the Behrend function,
we only need to consider regular classes
$\alpha \in K^A_0(\mathrm{St}/\CM)$ over a sub-stack
\[
\CM_{\tau} \subset \CM
\]
where the Behrend function is constant.
Then by projecting $\alpha$ to an element in $K^A_0(\mathrm{St})$
and using Proposition~\ref{structure_iso}
and \cite[Lem.3.8]{Br2} we obtain
that $\CI$ is well-defined, compare \cite[7.2]{Br2}.

\begin{thm}\label{epsilon_integration}
$\CI: H^A_{\mathrm{sc}}(X) \rightarrow C^\epsilon(X)$
is a Poisson algebra homomorphism.
\end{thm}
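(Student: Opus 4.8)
The plan is to verify that $\CI$ respects both the product $\ast$ and the Poisson bracket $\{\,,\}$, reducing everything to the behaviour of the canonical decomposition under the Hall algebra operations. Since $\CI$ is a $\BQ$-linear group homomorphism and effective regular classes span $H^A_{\mathrm{sc}}(X)$, it suffices to check compatibility on products and brackets of two effective classes $[X \xrightarrow{g_1} \CM, a_X]$ and $[Y \xrightarrow{g_2} \CM, a_Y]$ with $X, Y$ varieties, factoring through fixed components $\CM_{v_1}$ and $\CM_{v_2}$. The strategy mirrors Bridgeland's proof of \cite[Thm.5.2]{Br2} (and its Poisson enhancement), the new ingredient being the bookkeeping of the $\epsilon$-terms coming from the canonical decomposition $X = U_1 \sqcup V_1$, $Y = U_2 \sqcup V_2$ into $A$-fixed and $A$-free loci.

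First I would treat the product. By definition $[X]\ast[Y] = [\CZ \xrightarrow{p_2\circ\rho}\CM]$ where $\CZ$ is the stack of short exact sequences with sub and quotient pulled back from $X$ and $Y$. Bilinearity of $\ast$ splits $\CZ$ along the four pieces $U_1\times U_2$, $U_1\times V_2$, $V_1\times U_2$, $V_1\times V_2$. On each piece the extension stack is an affine-space bundle (a torsor under $\Ext^1$) over the corresponding product of base pieces, and crucially the $A$-action is the diagonal one. On $U_1\times U_2$ the $A$-action on the total extension stack is trivial, contributing the term $(-1)^{\chi(v_1,v_2)}\int g^\ast\nu_\CM\,\dd e$ to the coefficient of $c_{v_1+v_2}$, which by Behrend-function multiplicativity along the torsor (the standard sign $(-1)^{\chi(v_1,v_2)}$, exactly as in Bridgeland--Joyce) equals the product of the fixed-part integrals. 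On the two mixed pieces $U_i\times V_j$ the quotient by $A$ is $U_i\times(V_j/A)$ and the extension stack is still an affine bundle, contributing the two cross-terms $\int_{U_i}g^\ast\nu \cdot \int_{V_j/A}g^\ast\nu$ (with the sign $(-1)^{\dim A}$) to the $\epsilon$-coefficient. On $V_1\times V_2$ the diagonal $A$-action on the quotient $(V_1\times V_2)/A$ is fixed-point free, so its orbifold Euler characteristic vanishes, exactly as in the proof of Lemma~\ref{ring_structure}; hence it contributes nothing. Assembling these four contributions reproduces precisely $\CI([X])\ast\CI([Y])$ computed in $C^\epsilon(X)$ via \eqref{star_product}, using that $\CI$ with no $\epsilon$ already multiplies correctly by \cite[Thm.5.2]{Br2} and that the $\epsilon^2$-term vanishes automatically.

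For the Poisson bracket, one works one order deeper: $\{f,g\} = (f\ast g - g\ast f)/(\BL-1)$, so I would expand $X\ast Y$ and $Y\ast X$ in the motivic Hall algebra, track the leading term in $(\BL-1)$ after the difference, and compare with \eqref{star_bracket}. The fixed-locus (no $\epsilon$) part is Bridgeland's computation verbatim, producing $(-1)^{\chi(v_1,v_2)}\chi(v_1,v_2)c_{v_1+v_2}$. For the $\epsilon$-part, the relevant pieces are again the mixed loci $U_i\times V_j$: the $\Ext$-bundle dimensions enter, and after dividing by $(\BL-1)$ and specializing $\BL\to1$ one extracts the factor $\chi(v_1,v_2)$ with the correct sign, while the $V_1\times V_2$ piece still dies by the fixed-point-free quotient argument. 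The main obstacle I anticipate is precisely this last step: making rigorous that the $\epsilon$-components of $X\ast Y$ and $Y\ast X$ lie in the image of $(\BL-1)$ in the semi-classical quotient and that their ratio is computed by the expected $\Ext$-dimension difference — i.e. importing Bridgeland's regularity/semi-classical-limit argument (Proposition~\ref{Br2_Thm5.1}) not just for the fixed part but uniformly for the quotiented loci $V_j/A$, which requires knowing the relevant stratifications of $\CM^{(2)}$ are $A$-equivariant and that the Euler-characteristic-of-quotient integration \eqref{deffff} is multiplicative over the Deligne--Mumford fibrations appearing (this is exactly Lemma~\ref{Quotient}). Once that is in place, comparing coefficients of $c_{v_1+v_2}$ and of $\epsilon$ finishes the proof.
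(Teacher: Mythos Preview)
Your proposal is correct and follows essentially the same route as the paper's proof: reduce to effective classes, split each factor along the canonical decomposition into $A$-fixed and finite-stabilizer pieces, and treat the resulting cases using Bridgeland's semi-classical identity $y_1\ast y_2 \equiv [Y_1\times Y_2 \xrightarrow{\phi}\CM]$ mod $(\BL-1)$ together with the vanishing $e((V_1\times V_2)/A)=0$. The paper organizes the case analysis slightly differently---assuming from the start that each $Y_i$ is entirely of one type (fixed or free) rather than decomposing both simultaneously into four pieces---and it is more explicit than you are about one point: the Behrend-function input is not just the usual Bridgeland--Joyce sign, but the two Behrend identities of \cite[Thm~2.6]{T16} (one for the product, one producing the factor $\chi(v_1,v_2)$ for the bracket), which is what makes the argument go through for Calabi--Yau threefolds with $H^1(\CO_X)\neq 0$; you should cite these when you pass from $[Y_1\times Y_2\to\CM]$ to the numerical answer.
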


\subsection{Proof of Proposition \ref{Br2_Thm5.1} and Theorem \ref{epsilon_integration}}\label{PROOF}
Both proofs rely on a stratification
developed in \cite[Prop 6.2]{Br2}
whose $A$-equivariant form is the following.

\begin{prop}\label{stratification}
Let $Y_1$ and $Y_2$ be varieties with $A$-actions. Assume we have $A$-equivariant morphisms
\[
f_1: Y_1 \to  \CM, \ \quad  f_2:Y_2 \to \CM,
\]
and let 
\[
\CE_i \in \Coh(Y_i \times X) ~~(i=1,2) 
\]
be the corresponding families of sheaves on $X$.
Then we can stratify $Y_1 \times Y_2$ by locally clased
$A$-invariant sub-varieties
$W \subset Y_1 \times Y_2$,
such that for each closed point $w\in W$ the vector spaces
\[
\mathrm{Ext}^k_X\left(\CE_2|_{w\times X}, \CE_1|_{w\times X} \right)
\]
have fixed dimensions $d_k(W)$, and if we form the Cartesian diagrams
\begin{equation}\label{Hall_product_diagram}
\begin{tikzcd}
Z_W \arrow{r} \arrow{d}& Z \arrow{r}{\rho} \arrow{d} & \CM^{(2)} \arrow{r}{p_2} \arrow{d}{(p_1,p_3)}& \CM \\
W\arrow{r}& Y_1\times Y_2 \arrow{r}{(f_1,f_2)}& \CM\times\CM
\end{tikzcd}
\end{equation}
then
there exist an $A$-equivariant Zariski $\BC^{d_1(W)}$-bundle
$Q \rightarrow W$ such that
\begin{equation}\label{Z_W}
Z_W \simeq \left[ Q /  \BC^{d_0(W)} \right],
\end{equation}
where $\BC^{d_0(W)}$ acts trivially.
\end{prop}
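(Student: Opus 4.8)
The plan is to carry over the proof of \cite[Prop.~6.2]{Br2} and to observe that, because the input data $\CE_1,\CE_2$ and the maps $f_1,f_2$ are $A$-equivariant, every construction that enters the argument is canonical and therefore inherits an $A$-action automatically. Let $\pi\colon Y_1\times Y_2\times X\to Y_1\times Y_2$ be the projection and let $\CF_i$ denote the pullback of $\CE_i$ to $Y_1\times Y_2\times X$. The relative $\ext$-sheaves $\ext^k_\pi(\CF_2,\CF_1)$ are coherent sheaves on $Y_1\times Y_2$ whose fibre at $w=(y_1,y_2)$ is $\Ext^k_X(\CE_2|_{w\times X},\CE_1|_{w\times X})$, and since $\CF_1,\CF_2$ are $A$-equivariant so are these sheaves.

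First I would stratify $Y_1\times Y_2$ by the locally closed subsets on which all the integers $\dim_{\BC}\Ext^k_X(\CE_2|_{w\times X},\CE_1|_{w\times X})$ are constant; by upper semicontinuity of fibre dimensions this is a stratification by locally closed subvarieties. For an $A$-equivariant coherent sheaf the locus where the fibre has a prescribed dimension is $A$-invariant, since the equivariant structure identifies the fibres at $w$ and $a\cdot w$; hence the strata are $A$-invariant, and restricting to such a stratum $W$ we obtain the required $A$-invariant subvariety with fixed dimensions $d_k(W)$. On $W$, cohomology and base change shows that $\ext^1_\pi(\CF_2,\CF_1)$ restricts to a vector bundle $Q\to W$ of rank $d_1(W)$, and $\ext^0_\pi(\CF_2,\CF_1)$ to a vector bundle of rank $d_0(W)$; both carry $A$-actions lifting the one on $W$. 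As a vector bundle $Q\to W$ is Zariski-locally trivial, hence it is in particular an $A$-equivariant Zariski $\BC^{d_1(W)}$-bundle in the sense of relation~(c): that notion only requires the morphism to be $A$-equivariant and, separately, a Zariski fibration, and does not demand $A$-invariant local trivializations.

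It remains to identify $Z_W$. Unwinding the Cartesian diagram \eqref{Hall_product_diagram}, the groupoid fibre of $Z_W\to W$ over $w=(y_1,y_2)$ is the groupoid of short exact sequences
\[ 0 \to \CE_1|_{y_1\times X} \to E \to \CE_2|_{y_2\times X} \to 0, \]
whose set of isomorphism classes is $\Ext^1_X(\CE_2|_{y_2\times X},\CE_1|_{y_1\times X})$ and whose automorphism group, for a fixed such sequence and compatible with the fixed sub and quotient, is $\Hom_X(\CE_2|_{y_2\times X},\CE_1|_{y_1\times X})$ acting trivially on the extension class. Globalizing over $W$ by means of the bundles $Q$ and $\ext^0_\pi$ produced above gives, exactly as in \cite{Br2}, the isomorphism $Z_W\simeq[Q/\BC^{d_0(W)}]$ with $\BC^{d_0(W)}$ acting trivially. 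The point of the $A$-equivariant refinement is only that this identification is assembled from the relative $\ext^0_\pi$ and $\ext^1_\pi$ and from the universal property of $\CM^{(2)}$, all of which are $A$-equivariant; hence the isomorphism is $A$-equivariant.

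The main obstacle is essentially bookkeeping: one must check that in \cite[Prop.~6.2]{Br2} the isomorphism $Z_W\to[Q/\BC^{d_0(W)}]$ is produced by such a universal construction rather than by an ad hoc choice of local sections or trivializations. Since that proof is already of this shape, the $A$-action is carried along for free; no averaging, and no use of the group structure of $A$ beyond the observation that the canonical strata are $A$-invariant, is needed.
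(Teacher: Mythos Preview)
Your proposal is correct and follows essentially the same approach as the paper: both reduce to \cite[Prop.~6.2]{Br2} and then argue that the stratification can be made $A$-invariant. The paper's one-sentence proof observes that Bridgeland chooses the $W$ to be affine and then replaces each such $W$ by its $A$-saturation $\bigcup_{a\in A}(W+a)$, justified by the remark that extension groups form locally trivial bundles along $A$-orbits. You instead stratify directly by the loci where the $\Ext^k$-dimensions are constant, which are automatically $A$-invariant since translation identifies the fibres at $w$ and $a\cdot w$; this sidesteps the saturation step entirely. Either route yields the same conclusion, and the identification $Z_W\simeq[Q/\BC^{d_0(W)}]$ is, as you say, Bridgeland's argument verbatim once the strata are fixed.
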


\begin{proof}
In \cite[Prop 6.2]{Br2},
the subsets $W$ are chosen to be affine.
But since extension groups form locally trivial bundles
along $A$-orbits,
we may instead also use the $A$-orbits $\cup_{a \in A} (W+a)$ in 
the proof of \cite[Prop 6.2]{Br2}.
\end{proof}

\begin{proof}[Proof of Proposition \ref{Br2_Thm5.1}]
The proof is parallel to that of \cite[Thm 5.1]{Br2},
but we spell it out here to present the general method.
Let 
\[
y_i:= [Y_i \xrightarrow{f_i} \CM, a_{Y_i}], \ \quad i=1,2
\]
be equivariant regular classes defined by $f_i$ as in Proposition \ref{stratification}. Consider the $A$-equivariant stratification as in Proposition \ref{stratification},
\[
Y_1 \times Y_2 = \bigsqcup_{j} W_j.
\]
By definition and the diagram (\ref{Hall_product_diagram}) we have
\[
y_1 \ast y_2 = \sum_{j} [Z_{W_j} \rightarrow \CM]
\]
where the morphisms is the first row of (\ref{Hall_product_diagram}).
Hence (\ref{Z_W}) yields
\begin{equation}\label{y_1_product_y_2}
    y_1 \ast y_2 = \sum_{j} \BL^{-d_0(W_j)} [Q_j \xrightarrow{g_i} \CM],
\end{equation}
where $g_i$ is the bundle induced by the universal extension,
and we have supressed all $A$-actions for clarity.
Since the right-hand side of (\ref{y_1_product_y_2}) is regular,
we have proved the first part of Proposition \ref{Br2_Thm5.1}.

We prove the second part.
Since the complement of the zero-section of $Q_j\rightarrow W_j$ is a Zariski $\mathbb{C}^\ast$-fibration over $\BP(Q_j)$
by relation (c)
of Definition~\ref{defn_grothringofstacks}
we have
\begin{equation} \label{relation_100}
[Q_j \xrightarrow{g_j} \CM]=[W_j \rightarrow \CM]+ [\BL-1][\BP(Q_j) \rightarrow \CM]
\end{equation}
with compatible $A$-actions. Hence by (\ref{y_1_product_y_2}) we have
\begin{equation}\label{y_1_product_y_2_sc}
y_1\ast y_2 =
\sum_{j}[W_j \rightarrow \CM]
= [Y_1\times Y_2 \xrightarrow{\phi} \CM] \ \quad \mathrm{mod}~~ (\BL-1),
\end{equation}
where $\phi$ is induced by
$Y_1 \times Y_2 \xrightarrow{(f_1,f_2)} \CM \times \CM$ and
\[
\CM \times \CM \rightarrow \CM, \  ([\CE_1], [\CE_2]) \mapsto [\CE_1 \oplus \CE_2]. 
\]
Since $(\ref{y_1_product_y_2_sc})$ is independent
of the order of multiplication, $\ast$ is commutative.
\end{proof}

\begin{proof}[Proof of Theorem \ref{epsilon_integration}]
For equivariant effective classes 
\[
y_i:= [Y_i \xrightarrow{f_i} \CM, a_{Y_i}], ~~~~i=1,2
\]
we need to check the product identity
\begin{equation}\label{Poisson_product}
\CI(y_1 \ast y_2)= \CI(y_1)\ast \CI(y_2)
\end{equation}
and the Poisson bracket identity
\begin{equation}\label{Poisson_bracket}
\CI(\{y_1, y_2\})= \{ \CI(y_1), \CI(y_2) \}.
\end{equation}
By stratification of $\CM$ we may assume 
$f_i$ maps into the substack $\CM_{n_i} \subset \CM$
of objects with a fixed Chern character $v_i$
such that the Behrend function on $\CM_{n_i}$
is constant with value $n_i$.
We may further assume that the effective classes $y_i$
are of one of the following types:\\[3pt]
\noindent Type 1.\ Every $\BC$-valued point on $Y_i$ is $A$-fixed with respect to $a_{Y_i}$. \\
\noindent Type 2.\ Every $\BC$-valued point on $Y_i$ has finite stabilizers with respect to $a_{Y_i}$.\\[3pt]
We follow the calculations
of \cite[Section 7.2]{Br2} to treat each case.

\vspace{2pt} \noindent
{\bf Case 1.} Both $y_1$ and $y_2$ are of Type 1.
Then the $A$-actions does not play a role
and the $\epsilon$-term does not appear.
The proof of \cite[Thm 5.2]{Br2} applies.

\vspace{2pt}
\noindent
\textbf{Case 2.} Assume $y_1$ is of Type~1 and $y_2$ is of Type~$2$.
Then by definition
\[
\CI(y_1)= n_1e(Y_1)\cdot c_{v_1}, \ \quad
\CI(y_2)= (-1)^{\mathrm{dim} A} n_2e(Y_2/A)\epsilon \cdot c_{v_2},
\]
where the quotient $Y_2/A$ is induced by the action $a_{Y_2}$.
By (\ref{y_1_product_y_2_sc})
and the first Behrend function identity in \cite[Thm 2.6]{T16}
we have
\begin{equation}\label{Formula_1}
\CI(y_1\ast y_2)
= (-1)^{\dim A} (-1)^{\chi(v_1,v_2)}
n_1n_2 \cdot e(Y_1\times Y_2/A)\epsilon \cdot c_{v_1+v_2}.
\end{equation}
We obtain the identity (\ref{Poisson_product}) by (\ref{star_product}) and
\[e(Y_1)e(Y_2/A)=e(Y_1\times Y_2/A).\]

The calculation of $\CI(\{y_1, y_2\})$ is similar.
Let $\widehat{Q}_j \xrightarrow{\hat{g}} W_j$
be the Zariski bundle induced by the extension
\[
\mathrm{Ext}^1(\CE_1, \CE_2),
\ \ ([\CE_1], [\CE_2])\in  \mathrm{Im}(f_1,f_2)\subset \CM.
\]
By the expression (\ref{y_1_product_y_2}), the relation (\ref{relation_100}), and Serre duality, we get
\begin{equation*}
\begin{aligned}
\{y_1, y_2\}= \sum_{j} \Big{(} (d_3(W_j)-d_0(W_j)) & \cdot [W_j \rightarrow \CM] \\
& + [\BP(Q_j)\rightarrow \CM]   - [\BP(\widehat{Q}_j)\rightarrow \CM] \Big{)}
\end{aligned}
\end{equation*}
where we have supressed the natural $A$-actions
on the right hand side.
The second Behrend function identity in \cite[Thm 2.6]{T16} yields
\begin{equation}\label{Formula_0}
\begin{aligned}
\CI(\{y_1, y_2 \})
&=(-1)^{\dim A}\Big{(} \sum_{j}(-1)^{\chi(v_1,v_2)}\chi(v_1,v_2)n_1n_2\cdot e(W_j/A)\epsilon \Big{)} c_{v_1+v_2}\\
& = (-1)^{\dim A}(-1)^{\chi(v_1,v_2)}n_1n_2\chi(v_1,v_2) \cdot e(Y_1\times Y_2/A)\epsilon \cdot c_{v_1+v_2}
\end{aligned}
\end{equation}
which coincides with the right-hand side of (\ref{Poisson_bracket})
by (\ref{star_bracket}).

\vspace{2pt}
\noindent
{\bf Case 3.} Both $y_1$ and $y_2$ are of Type 2.
Since $\epsilon^2 = 0$ we have
\[
\CI(y_1)\ast\CI(y_2) = \{\CI(y_1), \CI(y_2)\} =0.
\]
On the other hand, both equations (\ref{Formula_1}) and (\ref{Formula_0}) also hold in this case. The product $Y_1\times Y_2$ carries an $(A\times A)$-action with no fixed points, hence $e(Y_1\times Y_2/A) =0$, and we have
\[
\CI(y_1 \ast y_2) = \CI(\{ y_1, y_2 \}) =0. \qedhere
\]
\end{proof}

\subsection{Generalized DT invariants}
\label{N_invariants_def}
Let $\CL$ be a fixed polarization on $X$.
The slope function
\begin{equation}\label{mu_slope}
\mu_{\CL}(\CE) = \frac{\mathrm{ch}_3(\CE)}{c_1(\CL)\cdot\mathrm{ch}_2(\CE)}.
\end{equation}
defines a stability condition on the category
$\mathrm{Coh}_{\leq 1}(X)$
of sheaves with support of dimension $\leq 1$.
Let 
\[
v_{n,\beta} = (0,0, \beta,n) \in \Gamma \subset \bigoplus_{i=0}^{3}H^{2i}(X, \mathbb{Z}),
\]
be a non-zero numerical class
and consider the moduli stack 
\[ \CM_{n,\beta} \subset \CM \]
of $\mu_\CL$-semistable
sheaves in $\mathrm{Coh}_{\leq 1}(X)$
with Chern character $v_{n,\beta}$.
Since semi-stability is preserved by translation
the $A$-action on $\CM$ restricts
to an action $a_{\CM_{n,\beta}}$ on $\CM_{n,\beta}$.
We define 
\[
\delta^A_{n,\beta}
= [\CM_{n,\beta} \hookrightarrow \CM, a_{\CM_{n,\beta}}] \in H^A(X)
\]
and take the formal logarithm
\begin{equation} \label{epsilon_guy}
\epsilon^A_{n,\beta}=\sum_{
\substack{l\geq 1, 
\Sigma_{i=1}^ln_i=n,\Sigma_{i=1}^l\beta_i=\beta,\\
\frac{n_i}{\beta_i \cdot c_1(\CL)} = \frac{n}{\beta \cdot c_1(\CL)}}
}
\frac{(-1)^l}{l} \delta_{n_1,\beta_1} \ast \delta_{n_2,\beta_2}\ast \cdots \ast \delta_{n_l,\beta_l}.
\end{equation}

The following theorem is the equivariant analog of
Joyce's no pole theorem \cite[Thm.8.7]{J3},
see also \cite{BR} for a modern proof.
\begin{thm} \label{NoPoleTheorem}
The element $(\BL-1)\epsilon^A_{n,\beta} \in H^A(X)$ is regular, i.e,
\[
(\BL -1)\epsilon^A_{n,\beta} \in H^A_{\mathrm{reg}}(X).
\]
\end{thm}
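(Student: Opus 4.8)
The plan is to reduce the statement to the non-equivariant no-pole theorem of Joyce by exploiting the structure theory we have already developed, in particular Proposition~\ref{structure_iso} and Lemma~\ref{stack_structure}. First I would recall that in the non-equivariant setting, Joyce's no-pole theorem asserts that the element $(\BL-1)\epsilon_{n,\beta} \in H(X)$, defined by the same logarithmic formula \eqref{epsilon_guy} but without the $A$-actions, lies in the regular subalgebra $H_{\mathrm{reg}}(X)$. The point is that the equivariant classes $\delta^A_{n,\beta}$ map to the ordinary classes $\delta_{n,\beta}$ under the forgetful map $H^A(X) \to H(X)$ that drops the $A$-action, and this map is compatible with the Hall product $\ast$; hence $(\BL-1)\epsilon^A_{n,\beta}$ maps to the regular element $(\BL-1)\epsilon_{n,\beta}$. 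However, regularity is not detected by the forgetful map alone, so this only tells us the underlying class is regular, not the equivariant lift.

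The actual argument must be more hands-on. I would proceed by analyzing the canonical decomposition of the equivariant class. Write $\CM_{n,\beta} = \CU \sqcup \CV$ where $\CU$ is the substack of $A$-fixed points (i.e.\ sheaves $\CE$ with $a_X^* \CE \cong \CE$ for all $a \in A$) and $\CV$ is its complement, on which the $A$-action has finite stabilizers since $A$ is simple and acts freely on $X$. On the fixed locus the $A$-action is trivial after the identification of Section~\ref{Subsection_Equivariant_Grothendieck_group_of_stacks}, so the contribution of $\CU$ to $\epsilon^A_{n,\beta}$ is literally Joyce's element (with trivial action) and regularity there follows from the non-equivariant theorem applied verbatim — the whole proof of \cite[Thm.8.7]{J3} (or the modern treatment of \cite{BR}) goes through in the equivariant category because every cut-and-paste operation it uses respects $A$-orbits, exactly as we observed when proving Proposition~\ref{stratification}. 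For the part of $\epsilon^A_{n,\beta}$ supported on $\CV$, I would use Lemma~\ref{stack_structure} to present the relevant stacks as $[Y/\mathrm{GL}_d]$ with $Y$ a variety carrying commuting $A$- and $\mathrm{GL}_d$-actions, so that the class becomes $[Y \to \CM, a_Y]/[\mathrm{GL}_d]$, and then the regularity statement reduces to a statement about varieties with $A$-action, which is again handled by the equivariant form of Joyce's stratification argument.

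Concretely, the key steps in order are: (1) observe that the multi-term logarithm \eqref{epsilon_guy} is built entirely from Hall products of the $\delta^A_{n_i,\beta_i}$, so by the structure result Proposition~\ref{structure_iso} it suffices to control each product; (2) replace Joyce's affine stratifying sets throughout his proof by their $A$-orbits, exactly as in the proof of Proposition~\ref{stratification}, so that all the intermediate classes remain $A$-equivariant and the combinatorial cancellation producing the factor $(\BL-1)$ takes place inside $H^A(X)$; (3) identify the two pieces of the canonical decomposition — on $\CU$ the $A$-action is trivial and one cites the non-equivariant theorem directly, while on $\CV$ one invokes Lemma~\ref{stack_structure} to reduce to varieties and then runs the same argument; (4) conclude that $(\BL-1)\epsilon^A_{n,\beta}$ is a $\Lambda$-linear combination of classes $[Z \to \CM, a_Z]$ with $Z$ a variety, i.e.\ lies in $H^A_{\mathrm{reg}}(X)$.

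The main obstacle I anticipate is step~(2): verifying in detail that Joyce's induction — which is a delicate book-keeping of dimensions of $\Ext$-groups over strata and of the resulting $\mathrm{GL}$-bundles and projective bundles — remains valid when one is forbidden from cutting along arbitrary locally closed subsets and may only use $A$-invariant ones. The subtlety is that the strata where $\Ext$-dimensions jump need not be $A$-invariant a priori; however, since $\Ext$-groups form locally constant systems along $A$-orbits (the $A$-action on $X$ being by automorphisms), the jumping loci \emph{are} automatically $A$-invariant, and this is precisely the observation that makes Proposition~\ref{stratification} work. Once this compatibility is in place the rest of Joyce's argument transcribes without change, so the proof amounts to checking that every geometric operation in \cite[Thm.8.7]{J3} / \cite{BR} is of the $A$-equivariant type already licensed by Proposition~\ref{stratification} and relation~(c) of Definition~\ref{defn_grothringofstacks}.
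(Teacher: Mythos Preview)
Your overall strategy --- make Joyce's no-pole argument $A$-equivariant by checking that every stratification and cut-and-paste step respects $A$-orbits --- is the same as the paper's, and your identification of Lemma~\ref{stack_structure} as the key tool for the finite-stabilizer locus is exactly right. The paper phrases this as making Joyce's \emph{virtual projection operators} $A$-equivariant: one shows that each $\CM_{n,\beta}$ admits an $A$-equivariant geometric bijection $Y/G \to \CM_{n,\beta}$ with commuting $A$- and $G = \mathrm{GL}_d$-actions on the variety $Y$, and then Joyce's explicit operators on $Y/G$ are automatically $A$-equivariant. The image of their output is $(\BL-1)\epsilon^A_{n,\beta}$, and it is regular.

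Where your plan diverges, and where there is a genuine gap, is the treatment of the $A$-fixed locus. You propose to split $\epsilon^A_{n,\beta}$ itself along the canonical decomposition and then ``cite the non-equivariant theorem directly'' on the fixed piece. But $\epsilon^A_{n,\beta}$ is a non-linear combination of Hall products of various $\delta^A_{n_i,\beta_i}$, so there is no well-defined ``contribution of $\CU$ to $\epsilon^A_{n,\beta}$'': cross-terms in the products mix fixed and non-fixed loci, and the fixed sublocus of $\CM_{n,\beta}$ is not itself a moduli stack of semistable sheaves on which Joyce's theorem can be invoked. The paper avoids this entirely by never decomposing $\epsilon$; it decomposes only the individual stacks $\CM_{n,\beta}$ in order to build the equivariant $Y/G$ presentation. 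For the fixed part it gives a concrete geometric identification you are missing: if $\dim A \geq 2$ no $1$-dimensional sheaf is $A$-fixed and the locus is empty, while if $A$ is an elliptic curve the fixed locus is nonempty only for $\beta = dF$, $n=0$, and is then realized as the pullback (via $\pi: X \to S = X/A$) of the moduli stack $\CN_d$ of length-$d$ $0$-dimensional sheaves on the quotient surface $S$, equipped with the trivial $A$-action. This furnishes the equivariant $Y/G$ presentation on the fixed piece and closes the argument without ever splitting $\epsilon^A_{n,\beta}$.
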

\begin{proof}
We prove the Theorem by making Joyce's virtual projection operators
\cite{JStack} $A$-equivariant.
For this we work with the $A$-equivariant Hall algebra
which satisfies relations (a) and (b) of
Section~\ref{Subsection_Equivariant_Grothendieck_group_of_stacks},
but not (c). The key step here is that
every stack
\[ \CM_{n,\beta} \hookrightarrow \CM \]
admits a $A$-equivariant geometric bijection
\begin{equation} f: Y/G \to \CM_{n,\beta}\,, \label{35314} \end{equation}
where $Y$ is a variety with an $A$-action and a $G$-action
which commute. Since the virtual projection operators
are explicitly defined on $Y/G$ and $A$-equivariant,
the projection on virtual indecomposable objects
is well-defined on $\CM_{n,\beta}$
and yields an $A$-equivariant and virtual indecomposable object.
Its image in $H^A_{\mathrm{reg}}(X)$
is precisely~\eqref{epsilon_guy}
and hence $(\BL-1)\epsilon^A_{n,\beta}$ is regular.

To show \eqref{35314} we can stratify $\CM_{n, \beta}$
into a component $\CU$ where the action
has finite stabilizers, and a component $\CV$ where
the action has stabilizer group $A$ at every closed point.
The claim follows for the first component by Lemma~\ref{stack_structure},
and we only need to consider the second.
If $A$ has dimension $\geq 2$ then
there does not exist a $1$-dimensional sheaf
fixed by $A$ and $\CV$ is empty.
Hence we may assume $A$ is an elliptic curve.
Since the $A$ action on $X$ is free, the stack quotient
\[ S = X / A \]
is a non-singular proper algebraic space of dimension $2$
and hence a non-singular projective surface. Let $\pi : X \to S$ be the quotient map
and let $F$ be the class of a fiber of $\pi$.
Then $\CV$ is empty unless $\beta = dF$ and $n = 0$
for some $d>0$,
in which case let $\CN_d$ be the
moduli stack of $0$-dimensional sheaves
of length $d$ on $S$,
equipped with the trivial $A$-action.
Then pullback via $\pi$ induces
an $A$-equivariant geometric bijection onto $\CV$,
\[ \CN_d \to \CV \subset \CM_{0,dF} \]
The claim then follows from
Kresch's stratification result \cite[Prop 3.5]{Br2}
applied to $\CN_d$,
and equipping $Y$ with the trivial $A$-action.
\end{proof}

Let $(\BL-1)\epsilon^A_{n,\beta}$
denote also the projection of
\[(\BL-1)\epsilon^A_{n,\beta} \in H^A_{\mathrm{reg}}(X)\]
on the equivariant semi-classical Hall algebra $H^A_{\mathrm{sc}}(X)$.

\begin{prop}\label{N_inv}
There exists $N^{\mathrm{red}}_{n,\beta} \in \mathbb{Q}$ such that
\[
\CI((\BL-1)\epsilon^A_{n,\beta})
= -(N^{\mathrm{red}}_{n,\beta} \cdot \epsilon)\cdot c_{v_{n,\beta}}.
\]
\end{prop}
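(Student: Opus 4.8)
By Theorem~\ref{NoPoleTheorem} the element $(\BL-1)\epsilon^A_{n,\beta}$ is regular, hence descends to the semi-classical Hall algebra $H^A_{\mathrm{sc}}(X)$, and we may apply the $\epsilon$-integration map $\CI$ of Section~\ref{integration2}. The plan is to identify its two components: the proposition amounts to saying that the image is supported on the single Chern character $c_{v_{n,\beta}}$, that its coefficient of $\epsilon^0$ vanishes, and that its coefficient of $\epsilon$ is rational. The first and third are formal. For the first, I would note that each $\delta^A_{n_i,\beta_i}$ factors through the component $\CM_{v_{n_i,\beta_i}}\subset\CM$, that the Hall product adds Chern characters, and that the summation in \eqref{epsilon_guy} imposes $\sum n_i=n$ and $\sum\beta_i=\beta$; hence $(\BL-1)\epsilon^A_{n,\beta}$ lies in the graded piece $H^A_{v_{n,\beta}}(\CM)$, which $\CI$ sends into $\BQ[\epsilon]\cdot c_{v_{n,\beta}}$ by \eqref{Integration2}. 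Thus $\CI\big((\BL-1)\epsilon^A_{n,\beta}\big)=(a+b\epsilon)\,c_{v_{n,\beta}}$ with $a,b\in\BQ$ (the coefficients in $C^\epsilon(X)$ lie in $\BQ[\epsilon]$), and I would set $N^{\mathrm{red}}_{n,\beta}:=-b$.

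It remains to prove $a=0$. Unwinding the canonical decomposition \eqref{candecoco} and the definition \eqref{Integration2}, the $\epsilon^0$-coefficient of $\CI([Z\xrightarrow{g}\CM,a_Z])$, for an effective regular class with $Z$ a variety, equals $\int_{Z^A}g^\ast\nu_\CM\,\dd{e}$, where $Z^A\subset Z$ is the closed locus of $A$-fixed points. As $g$ is $A$-equivariant it carries $Z^A$ into the $A$-fixed closed substack $\CM^A\subset\CM$. Writing $(\BL-1)\epsilon^A_{n,\beta}$ as a $\Lambda$-linear combination of classes $[Z_i\to\CM]$, each factoring through $\CM_{v_{n,\beta}}$ and, by the slope constraint in \eqref{epsilon_guy}, assembled from stacks $\CM_{n_i,\beta_i}$ with all $n_i\neq0$ whenever $n\neq0$, I conclude that $a$ is a linear combination of integrals over the loci $Z_i^A$, each of which maps into a fixed substack $\CM^A_{n_i,\beta_i}$.

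The main obstacle is to see that these fixed substacks are empty, and here the geometry of the two examples enters. For $X=A$ an abelian threefold acting on itself by translation, the action on $X$ has no fixed points and, since $A$ is simple, no proper positive-dimensional $A$-invariant subscheme; hence no object of $\mathrm{Coh}_{\leq1}(X)$ has $A$-invariant isomorphism class and $\CM^A=\emptyset$. For $X=S\times E$ with $E$ acting by translation in the second factor, a sheaf $\CF\in\mathrm{Coh}_{\leq1}(X)$ with $E$-invariant isomorphism class has $E$-invariant support, so $\mathrm{supp}(\CF)$ is a union of fibers $\{s_i\}\times E$; restricting to such a fiber gives a sheaf on an elliptic curve with translation-invariant isomorphism class, necessarily locally free of degree $0$, so $\mathrm{ch}_3(\CF)=\chi(\CF)=0$ and $\CM^E_{n,\beta}$ has no $\BC$-points once $n\neq0$. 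Therefore in both geometries, in the range $n\neq0$ in which the reduced invariants are considered, every relevant fixed substack $\CM^A_{n_i,\beta_i}$ is empty, so $Z_i^A=\emptyset$ for all $i$, $a=0$, and $\CI\big((\BL-1)\epsilon^A_{n,\beta}\big)=-N^{\mathrm{red}}_{n,\beta}\,\epsilon\cdot c_{v_{n,\beta}}$. The remaining case $n=0$, in which $\CM^A_{0,\beta}$ may be non-empty, is the degree-zero-over-the-base situation and is treated separately in Section~\ref{Section_Equivariant_Grothendieck_rings}.
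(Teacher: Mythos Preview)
Your argument is correct for $n\neq 0$ and in fact coincides with the paper's for $\dim A>1$, but the treatment of $n=0$ is a genuine gap. The paper's proof proceeds differently: it observes that the $\epsilon^0$-coefficient of $\CI$ agrees with the \emph{ordinary} (non-equivariant) integration map, since $\int_V g^\ast\nu_\CM\,\dd e=0$ whenever the $A$-action on $V$ has finite stabilizers (because $e(A)=0$). Hence the $\epsilon^0$-coefficient is precisely the usual generalized DT invariant $N_{n,\beta}$ of \cite{Br1,T08,T16}. For $\dim A>1$ one has $N_{n,\beta}=0$ by your emptiness argument; for $\dim A=1$ the paper invokes Toda \cite[Lem.~2.11]{Tpar2} (or \cite[Prop.~6.7]{T12}), which gives $N_{n,\beta}=0$ for \emph{all} $(n,\beta)\neq 0$ via the $\Pic^0(X)$-action by tensoring.

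Your deferral of the case $n=0$ to Section~\ref{Section_Equivariant_Grothendieck_rings} is not valid: that section computes degree-$0$ Hilbert-scheme invariants, not the generalized invariants $N^{\mathrm{red}}_{0,\beta}$, and the proposition as stated must hold for all $(n,\beta)\neq 0$. For $\dim A=1$ and $n=0$ the fixed substack $\CM^A_{0,\beta}$ is genuinely non-empty (e.g.\ pullbacks of zero-dimensional sheaves from the quotient surface $X/E$), so the vanishing $a=0$ cannot be deduced from emptiness and requires Toda's Jacobian argument. A minor point: the clause ``each of which maps into a fixed substack $\CM^A_{n_i,\beta_i}$'' is a slip, since your $Z_i$ factor through $\CM_{v_{n,\beta}}$ for the \emph{total} class; but this does no harm, as you only need $\CM^A_{v_{n,\beta}}=\emptyset$ when $n\neq 0$, and the discussion of the individual $n_i$ is superfluous.
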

\begin{proof}
By the definition of $\CI$ we have
\[
\CI((\BL-1)\epsilon^A_{n,\beta})
= -( N_{n,\beta}+N^{\mathrm{red}}_{n,\beta} \cdot \epsilon)\cdot c_{v_{n,\beta}}.
\]
where $N_{n,\beta} \in \mathbb{Q}$
is the generalized DT invariant of \cite{Br1, T08, T16} and $N^{\mathrm{red}}_{n,\beta} \in \mathbb{Q}$.

If $\dim(A) > 1$ then no non-trivial sheaf of dimension $\leq 1$
is $A$-invariant. Hence
$N_{n,\beta}=0$ for all $(n, \beta) \neq 0$.

If $\dim(A) = 1$ then every $A$-invariant sheaf is supported on
the elliptic curve $A$
which implies
$N_{n,\beta} = 0$ by \cite[Lem. 2.11]{Tpar2}.\footnote{
We may also use
\cite[Prop 6.7]{T12} here.}
\end{proof}

\subsection{Wall-crossing formulas}
We define a reduced version of
the invariants $L_{n,\beta}$
defined in \cite{Br1, T08, T16}.
We follow the discussion in \cite[4.2]{T16}.

Let $D^b \Coh(X)$ be the bounded derived category
of coherent sheaves on $X$,
and consider the slope function
\[
\nu_{\CL}(\CE):=
\frac{c_1(\CE)\cdot{\CL}^2}{\mathrm{rk(\CE)}},
\ \ \CE \in \Coh(X).
\]
which defines a weak stability condition on $\Coh(X)$.
Let $\CA$ be the category
of complexes $I^\bullet \in D^b\Coh(X)$ satisfying the following conditions:
\begin{enumerate}
\item[(a)] $h^i(I^\bullet)=0$ if $i \neq 0,1$.
\item[(b)] All Harder--Narasimhan factors of $h^0(I^\bullet)$ have slopes $\leq 0$.
\item[(c)] All Harder--Narasimhan factors of $h^1(I^\bullet)$ have slopes $> 0$.
\end{enumerate}
By \cite[3.3]{T16} the category $\CA$ is the tilt of $\Coh(X)$ along
a torsion pair and hence abelian.
Let ${\CM}_{\CA}$ be the moduli stack
of objects in the category $\CA$,
and let
\[ a_{{\CM}_{\CA}} : A \times \CM_{\CA} \to \CM_{\CA} \]
be the natural translation action by $A$.
Then we can define the $A$-equivariant motivic Hall algebra
$H^A(\CA)$ for $\CM_\CA$ parallel to $\CM$.
By \cite[Thm 1.1]{T16}
the $\epsilon$-integration map
\begin{equation}
\CI: H^A_{\mathrm{sc}}(\CA) \rightarrow C^\epsilon(X)
\end{equation}
defined as in (\ref{Integration2})
is a homomorphism of Poisson algebras.
We refer to \cite[Sec.2]{T16}
for a detailed discussion about
how to replace $\CM$ by $\CM_{\CA}$. 

Recall the slope function (\ref{mu_slope})
and let 
$\CM^L_{n,\beta} \subset \CM_{\CA}$
be the moduli stack of objects
$I^\bullet \in \CA$ satisfying the following conditions:
\begin{enumerate}
\item[(a)] $\mathrm{ch}(I^\bullet) = u_{n,\beta} :=(1,0,-\beta, -n)$.
\item[(b)] $h^0(I^\bullet)$ is an ideal sheaf.
\item[(c)] $h^1(I^\bullet) \in \Coh_{\leq 1}(X)$ and
$\mu_\CL(\CE) \geq 0$ for every sub-sheaf $\CE \subset h^1(I^\bullet)$.
\item[(d)]
$\mathrm{Hom}(\CE [-1], I^\bullet) =0$
for any $\CE \in \Coh(X)$ with $\mu_\CL(\CE)\geq 0$.
\end{enumerate}
Let
\[
\CL^A_{n,\beta}
=[\CM^L_{n,\beta} \subset \CM_\CA, a_{{\CM}_{\CA}}|_{\CM^L_{n,\beta}}]
\in H^A(\CM_\CA)
\]
be the class defined by the moduli stack $\CM^L_{n,\beta}$,
with the $A$-action obtained by the restriction of $a_{\CM_\CA}$.
Then by \cite[4.2]{T16} the class
\[
(\BL-1) \CL^A_{n,\beta} \in H^E(\CM_\CA)
\]
is a regular element.
Define invariants
$L_{n,\beta}, L_{n,\beta}^{\mathrm{red}} \in \BQ$
by\[
\CI((\BL-1)\CL^A_{n,\beta}) = -(L_{n,\beta}+L^{\mathrm{red}}_{n,\beta}\cdot \epsilon)c_{u_{n,\beta}}.
\]
By construction $L_{n,\beta}\in \BZ$ coincide with the usual
L-invariant defined in \cite{T16}.

\begin{lem}\label{L_symmetry} For every $\beta$ we have
\begin{enumerate}
\item $L^{\mathrm{red}}_{n,\beta}
= L^{\mathrm{red}}_{-n,\beta}$ for all $n \in \BZ$,
\item $L^{\mathrm{red}}_{n,\beta}=0$
if $n\gg 0$.
\end{enumerate}
\end{lem}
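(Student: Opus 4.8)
The plan is to deduce both statements from standard properties of the moduli stacks $\CM^L_{n,\beta}$ together with the wall-crossing machinery, exactly as in the non-equivariant case but keeping track of the $\epsilon$-term. For part (1), the key geometric input is the derived dualizing functor $\mathbb{R}\hom(-,\CO_X)[1]$ on $D^b\Coh(X)$, which, because $X$ is Calabi--Yau, sends the category $\CA$ to itself (after a suitable shift/tilt) and takes an object of class $u_{n,\beta} = (1,0,-\beta,-n)$ to an object of class $u_{-n,\beta} = (1,0,-\beta,n)$; concretely for ideal-sheaf-type objects $I^\bullet$ this is the operation interchanging the roles of the ideal sheaf and its ``derived dual,'' matching the standard symmetry $L_{n,\beta} = L_{-n,\beta}$ of \cite{T16}. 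First I would check that this duality is $A$-equivariant: since $A$ acts by translation and the dualizing functor commutes with pullback along the (invertible) translation automorphisms, it induces an $A$-equivariant isomorphism of stacks $\CM^L_{n,\beta} \xrightarrow{\sim} \CM^L_{-n,\beta}$ compatible with the inclusions into $\CM_\CA$. Such an isomorphism preserves the Behrend function (it is an isomorphism of stacks, and $\nu$ is intrinsic), preserves the $A$-fixed locus and the free locus, and hence by the very definition \eqref{Integration2} of $\CI$ it matches the $\epsilon$-coefficients: $L^{\mathrm{red}}_{n,\beta} = L^{\mathrm{red}}_{-n,\beta}$. One must also verify that the regularizing operation $(\BL-1)(-)$ and the passage to $H^A_{\mathrm{sc}}(X)$ are respected, but this is formal since the duality is an equivariant equivalence of the ambient categories.

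For part (2), the plan is to argue that for $n \gg 0$ the stack $\CM^L_{n,\beta}$ is entirely contained in the locus where the $A$-action has stabilizer group $A$ at every point (equivalently, its free locus is empty), so that the $\epsilon$-term in $\CI((\BL-1)\CL^A_{n,\beta})$ vanishes. The geometric reason is the standard boundedness/vanishing statement from \cite[4.2]{T16}: for $n$ sufficiently large (relative to $\beta$ and the polarization), every object $I^\bullet \in \CM^L_{n,\beta}$ has $h^1(I^\bullet) \neq 0$ with large length, and in fact one can show—as in Toda's analysis—that $\CM^L_{n,\beta}$ either becomes empty or, in the $\dim A = 1$ case where $\beta$ must be a multiple of the fiber class $F$, consists only of objects supported on fibers and pulled back from the quotient surface $S = X/A$, which carry the trivial (hence stabilizer-$A$) action. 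Either way the free locus $V$ in the canonical decomposition \eqref{candecoco} of $(\BL-1)\CL^A_{n,\beta}$ is empty, so the coefficient of $\epsilon$ is $0$, i.e. $L^{\mathrm{red}}_{n,\beta} = 0$.

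The main obstacle I expect is part (2): one needs a precise boundedness statement controlling exactly \emph{which} objects appear in $\CM^L_{n,\beta}$ for large $n$, and then an argument that all of them are $A$-invariant. The first half is essentially extracted from \cite[Sec.~4]{T16} (the $L$-invariants stabilize / vanish for $n \gg 0$ because the defining conditions (a)--(d) force $I^\bullet$ into a bounded family whose generic behaviour is understood), but translating ``$L_{n,\beta}$ is eventually the trivial invariant'' into ``$\CM^L_{n,\beta}$ has empty free locus'' requires identifying the relevant objects with sheaves on the quotient. For $\dim A \ge 2$ there is nothing to prove since no $1$-dimensional subscheme is $A$-invariant, so the real content is the elliptic-curve case, where one would reuse the fibration $\pi : X \to S$ and the identification of $A$-invariant sheaves with pullbacks from $S$ that already appeared in the proof of Theorem~\ref{NoPoleTheorem}. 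Alternatively, one could sidestep the geometry and argue purely homologically: express $L^{\mathrm{red}}_{n,\beta}$ via the wall-crossing formula relating it to the $N^{\mathrm{red}}$-invariants and $\PT$-type contributions, and invoke vanishing of the latter for $n \gg 0$; but this presupposes the wall-crossing identity, which is presumably established only later in the paper, so the direct moduli-theoretic route is preferable here.
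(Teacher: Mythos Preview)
Your plan matches the paper's own proof, which reads in its entirety: ``By the same proof as in \cite[Sec.~4]{T16} since the dualizing functor is $A$-equivariant.'' Your treatment of part~(1) is exactly this---the derived dualizing functor commutes with translation, hence induces an $A$-equivariant isomorphism $\CM^L_{n,\beta}\simeq\CM^L_{-n,\beta}$, and the $\epsilon$-coefficients match.

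For part~(2) you are over-engineering. Toda's argument in \cite[Sec.~4]{T16} is a boundedness statement: for fixed $\beta$ the stack $\CM^L_{n,\beta}$ is \emph{empty} once $n\gg 0$, because the conditions (a)--(d) cannot be simultaneously satisfied. Emptiness needs no equivariance hypothesis, so $L^{\mathrm{red}}_{n,\beta}=0$ follows immediately. Your case analysis (nonempty-but-entirely-$A$-fixed loci, the split into $\dim A=1$ versus $\dim A\ge 2$, identification with pullbacks from $X/A$, or the alternative route through the wall-crossing formula) is unnecessary and in places slightly muddled---e.g.\ the remark that ``for $\dim A\ge 2$ there is nothing to prove'' conflates the vanishing mechanism for $N_{n,\beta}$ in Proposition~\ref{N_inv} with the one needed here. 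Just invoke Toda's boundedness directly.
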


\begin{proof}
By the same proof as in \cite[Sec.4]{T16}
since the dualizing functor is $A$-equivariant.
\end{proof}

In \eqref{eqn3} we defined $A$-reduced Donaldson--Thomas invariants
if the action of $A$ on the Hilbert scheme has finite stabilizers.
Here we extend the definition to the general case as follows.
For all $n, \beta$, let
\[
V_{n,\beta} \subset \Hilb^n(X, \beta)
\]
be the complement of the fixed locus of translation by $A$. Then
we define
\[
\DT^{X, A\text{-red}}_{n,\beta} =
\int_{V_{n, \beta} / A} \nu \dd{e}
\]
where $\nu : V_{n,\beta}/A \to \BZ$ is the Behrend function
on the quotient.

The following Theorem is the analog for reduced invariants
of the main structure result of Donaldson--Thomas theory
\cite{Br1, T08, T16}.
\begin{thm}\label{reduced_Toda_equation}
We have the following formula,
\begin{multline}
\label{Toda}
\sum_{n,\beta}\DT_{n,\beta}^{X,A\text{-}\mathrm{red}} q^n t^\beta \\
= \bigg( \sum_{{n>0},\beta} (-1)^{n-1}nN^{\mathrm{red}}_{n,\beta}q^nt^\beta \bigg) \cdot \bigg( \sum_{n,\beta}L_{n,\beta}q^nt^\beta \bigg)
 +\sum_{n,\beta}L^{\mathrm{red}}_{n,\beta}q^nt^\beta.
\end{multline}
\end{thm}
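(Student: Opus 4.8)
The plan is to follow the wall-crossing argument of Toda \cite{T16} inside the equivariant semi-classical Hall algebra $H^A_{\mathrm{sc}}(\CA)$, and to apply the $\epsilon$-integration map $\CI$ (Theorem~\ref{epsilon_integration}) to the resulting identity, reading off the $\epsilon$-coefficient. Concretely, one works with the Hilbert-scheme class, viewed as a stack class in $H^A(\CA)$, namely the class of the open substack parametrizing ideal sheaves, and decomposes it in the Hall algebra in terms of the classes $\delta^A_{n,\beta}$ (whose logarithms give $\epsilon^A_{n,\beta}$) and the classes $\CL^A_{n,\beta}$. The key algebraic input is the Hall-algebra identity of \cite[Sec.~4]{T16} expressing the stable-pairs/Hilbert-scheme generating series as a product, in the completed Hall algebra, of the exponential of (essentially) $\sum \epsilon^A_{n,\beta}$ with the generating series of the $\CL^A_{n,\beta}$; this identity holds verbatim here because the Hall-algebra product $\ast$, the tilting construction of $\CA$, and the regularity statements (Theorem~\ref{NoPoleTheorem} and the regularity of $(\BL-1)\CL^A_{n,\beta}$) have all been set up $A$-equivariantly above.

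**Key steps.** First I would recall the derived-category/Hall-algebra identity from \cite[4.2]{T16}: in the completed Hall algebra, the generating function of ideal-sheaf classes equals $\exp\!\big(\sum_{n>0,\beta}\epsilon^A_{n,\beta}\big)$ composed (via $\ast$) with $\sum_{n,\beta}\CL^A_{n,\beta}q^nt^\beta$, up to the usual normalizations; since every class involved becomes regular after multiplying by $\BL-1$ and we are free to work in $H^A_{\mathrm{sc}}(X)$ (resp.\ $H^A_{\mathrm{sc}}(\CA)$) where $\BL=1$, we may project everything to the semi-classical quotient. Second, I apply $\CI$, which by Theorem~\ref{epsilon_integration} is a Poisson algebra homomorphism, so it turns $\ast$ into the product \eqref{star_product} on $C^\epsilon(X)$ and $\exp$ of a sum of Poisson-commuting terms into an ordinary product/exponential; the sign $(-1)^{\chi(v_1,v_2)}$ together with the fact that for $1$-dimensional support classes $\chi(v_{n,\beta},v_{m,\gamma})$ reduces (as in \cite[7.4]{T16}) to the pairing producing the factor $(-1)^{n-1}n$ is what generates the prefactor $(-1)^{n-1}nN^{\mathrm{red}}_{n,\beta}$ in the theorem. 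Third, using Proposition~\ref{N_inv}, $\CI((\BL-1)\epsilon^A_{n,\beta})=-(N^{\mathrm{red}}_{n,\beta}\epsilon)c_{v_{n,\beta}}$ (the non-$\epsilon$ DT-term $N_{n,\beta}$ vanishes), and using the definition of $L_{n,\beta},L^{\mathrm{red}}_{n,\beta}$, I expand the product on $C^\epsilon(X)$, keeping only the coefficient of $\epsilon$: the degree-$\epsilon$ part of a product of two series $A_0+A_1\epsilon$ and $B_0+B_1\epsilon$ is $A_0B_1+A_1B_0$, which with $A_0=0$ on the $\epsilon^A$-side and $B_1=0$ being forbidden only where $L_{n,\beta}$ has no $\epsilon$-part gives exactly the two terms $\big(\sum(-1)^{n-1}nN^{\mathrm{red}}_{n,\beta}q^nt^\beta\big)\cdot\big(\sum L_{n,\beta}q^nt^\beta\big)+\sum L^{\mathrm{red}}_{n,\beta}q^nt^\beta$. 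Finally, I identify the $\epsilon$-coefficient of $\CI$ applied to the ideal-sheaf generating series with $\sum_{n,\beta}\DT^{X,A\text{-red}}_{n,\beta}q^nt^\beta$, using the extended definition of $\DT^{X,A\text{-red}}_{n,\beta}$ via $V_{n,\beta}/A$ given just before the theorem and the fact that $\CI$ computes exactly this Behrend-weighted orbifold Euler characteristic of the non-fixed locus.

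**Main obstacle.** The delicate point is matching the equivariant Hall-algebra manipulations with Toda's non-equivariant bookkeeping without losing track of where the $\epsilon$'s appear: in particular, one must check that the HN/torsion-pair decompositions and the "no strictly semistable" genericity reductions of \cite{T16} can be performed $A$-equivariantly (stratifying along $A$-orbits, as in Proposition~\ref{stratification}), and that the completion of the Hall algebra in which the exponential makes sense is compatible with the $\Gamma$-grading and with the $A$-action. A secondary subtlety is the exact sign and the combinatorial weight $(-1)^{n-1}n$: this comes from the logarithm relating $\delta^A_{n,\beta}$ to $\epsilon^A_{n,\beta}$ together with the Euler-form sign in \eqref{star_product}, and one must verify it survives the passage to the $\epsilon$-part unchanged, exactly as in the non-reduced case \cite[7.4]{T16}. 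Once these compatibilities are in place, the computation is the formal expansion described above.
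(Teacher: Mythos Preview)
Your proposal is correct and follows essentially the same route as the paper: lift Toda's Hall-algebra identity \cite[Thm~4.8]{T16} to the $A$-equivariant setting, apply the $\epsilon$-integration map $\CI$, use Proposition~\ref{N_inv} to eliminate the non-reduced $N_{n,\beta}$ terms, and read off the $\epsilon$-coefficient of the resulting product $\exp\big(\sum_{n>0,\beta}(-1)^{n-1}nN^{\mathrm{red}}_{n,\beta}\epsilon\,q^nt^\beta\big)\cdot\sum_{n,\beta}(L_{n,\beta}+L^{\mathrm{red}}_{n,\beta}\epsilon)q^nt^\beta$. The paper's proof is terse (two sentences), whereas you spell out the equivariance checks and the origin of the weight $(-1)^{n-1}n$; the substance is the same.
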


\begin{proof}
By a straightforward argument the identity \cite[Thm 4.8]{T16}
lifts to the $A$-equivariant motivic hall algebra.
Applying the reduced integration map $\CI$ by
Proposition~\ref{N_inv}
the left-hand side of (\ref{Toda}) is the $\epsilon$-coefficient of
\begin{equation}\label{wcformula}
\mathrm{exp}\left(\sum_{n>0,\beta} (-1)^{n-1}nN^{\mathrm{red}}_{n,\beta}\epsilon q^nt^\beta  \right)
\cdot
\left(\sum_{n,\beta}(L_{n,\beta}+L^{\mathrm{red}}_{n,\beta}\epsilon)q^nt^\beta \right). \qedhere
\end{equation}
\end{proof}

\section{Reduced DT invariants of $K3\times E$}
\label{Section_Reduced_DT_for_K3xE}
\subsection{Overview}
Let $X$ be the product
of a K3 surface $S$ and an elliptic curve $E$.
We let $E$ act on $X$ by
translation in the second factor.
Throughout the section
all reduced invariants shall be understood
as $E$-reduced invariants. In particular we write
\[
\DT^{\mathrm{red}}_{n,\beta}
=
\DT^{X, E\text{-red}}_{n,\beta} \,.
\]

\subsection{$L$-invariants}
\begin{prop}\label{vanish2}
We have $L^{\mathrm{red}}_{n,(0,d)} =0$ for all $n$ and $d \geq 0$, and
\begin{equation} \label{Linvariants}
\sum_{d \geq 0} \sum_{n \in \BZ} L_{n,(0,d)}q^nt^d = \prod_{m\geq 1}(1-t^m)^{-24}.
\end{equation}
\end{prop}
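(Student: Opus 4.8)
The plan is to describe the moduli stacks $\CM^L_{n,(0,d)}$ explicitly, exploiting that the curve class $dF$ — where $F=[\{\mathrm{pt}\}\times E]$ is the fibre class of $\pi\colon S\times E\to S$ — is \emph{fibrewise} and $\chi(\CO_E)=0$.

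First I would extract a numerical constraint. For $I^\bullet\in\CA$ with $h^0(I^\bullet)=I_W$ and $h^1(I^\bullet)=Q$ as in the defining conditions (a)--(d), a Riemann--Roch computation using $c_1(X)=0$ and $[W]\cdot c_2(X)=0$ gives $\mathrm{ch}_3(I^\bullet)=-\chi(\CO_W)-\chi(Q)$. Since every $1$-dimensional subscheme of class $dF$ is a union of elliptic fibres together with embedded points, $\chi(\CO_W)\geq 0$; and the $\mu_\CL$-positivity of all subsheaves of $Q$ forces $\chi(Q)\geq 0$. As $\mathrm{ch}_3(I^\bullet)=-n$, this already yields $\CM^L_{n,(0,d)}=\emptyset$ for $n<0$, hence $L_{n,(0,d)}=L^{\mathrm{red}}_{n,(0,d)}=0$ for $n<0$. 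Next I would pin down the case $n=0$. Applying condition (d) with $\CE=\CO_p$ shows $W$ has no embedded points, i.e.\ $W$ is pure; twisting any $1$-dimensional piece of $Q$ down by a large effective divisor produces a subsheaf of very negative $\mu_\CL$, so condition (c) forces $Q$ to be $0$-dimensional, and then $\chi(\CO_W)=\chi(Q)=0$ forces $Q=0$ and $W$ pure of class $dF$. Such a $W$ is exactly $E\times Z_0$ for a unique $Z_0\in\Hilb^d(S)$, giving an isomorphism $\CM^L_{0,(0,d)}\cong\Hilb^d(S)$ under which $E$-translation acts trivially. Since $X$ is Calabi--Yau and $\CM^L_{0,(0,d)}$ is smooth of dimension $2d$, the Behrend function is identically $(-1)^{2d}=1$ on it, so $L_{0,(0,d)}=e(\Hilb^d(S))=a_d$, while $L^{\mathrm{red}}_{0,(0,d)}=0$ because the non-$E$-fixed locus is empty.

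For $n>0$, every object of $\CM^L_{n,(0,d)}$ has $h^1(I^\bullet)$ a nonzero $0$-dimensional sheaf of length $n$, and $E$ acts by translating its (nonempty, finite) support; hence the induced $E$-action has finite stabilizers and no fixed points. Since $e(E)=0$, the Behrend-weighted Euler characteristic of any variety carrying such an action vanishes, so $L_{n,(0,d)}=0$. For the reduced invariant, Lemma~\ref{L_symmetry}(1) gives $L^{\mathrm{red}}_{n,(0,d)}=L^{\mathrm{red}}_{-n,(0,d)}$, which vanishes by the first step; together with the $n\le 0$ cases this proves $L^{\mathrm{red}}_{n,(0,d)}=0$ for all $n$. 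Summing over $n\in\BZ$ and $d\geq 0$ then yields
\[
\sum_{d\geq 0}\sum_{n\in\BZ}L_{n,(0,d)}q^nt^d=\sum_{d\geq 0}e(\Hilb^d(S))\,t^d=\prod_{m\geq 1}(1-t^m)^{-24}
\]
by G\"ottsche's formula \cite{Goe}.

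The main obstacle will be the structural step: one must use conditions (a)--(d) carefully to see that, in the class $(0,d)$, $h^1(I^\bullet)$ is always $0$-dimensional and $h^0(I^\bullet)=I_W$ with $W$ pure, and then identify $\CM^L_{0,(0,d)}$ with $\Hilb^d(S)$ \emph{scheme-theoretically} so that the Behrend function of $\CM_{\CA}$ restricts to $(-1)^{2d}$ on this smooth component. The vanishings for $n\neq 0$ are comparatively soft, resting only on $e(E)=0$ and the symmetry of $L^{\mathrm{red}}$ in $n$.
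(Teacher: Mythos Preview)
Your approach is viable but differs from the paper's in one key respect. For the vanishing of $L^{\mathrm{red}}_{n,(0,d)}$ when $n\le 0$, the paper does \emph{not} analyze $\CM^L_{n,(0,d)}$ directly. Instead it observes that the Hilbert scheme $\Hilb^n(X,(0,d))$ is empty for $n<0$ and isomorphic to the $E$-fixed $\Hilb^d(S)$ for $n=0$, so $\DT^{\mathrm{red}}_{n,(0,d)}=0$ for $n\le 0$; then it feeds this into the wall-crossing identity (Theorem~\ref{reduced_Toda_equation}) to read off $L^{\mathrm{red}}_{n,(0,d)}=0$ for $n\le 0$, and finishes with Lemma~\ref{L_symmetry} as you do. For the ordinary $L$-invariants it simply cites \cite[Prop.~6.8]{T12}. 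The paper's route is shorter because the Hilbert scheme in class $(0,d)$ is transparently easy, and the wall-crossing formula transports that information to the $L$-side for free; your route bypasses Theorem~\ref{reduced_Toda_equation} entirely but must instead carry out the structural analysis of $\CM^L_{n,(0,d)}$ (essentially reproving \cite[Prop.~6.8]{T12}) and extend it equivariantly.

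Two points in your direct argument deserve tightening. First, ``twisting down by a large effective divisor'' does not literally produce a subsheaf of $Q$; you should instead take the preimage in $Q$ of a twist of the pure $1$-dimensional quotient $Q/T_0(Q)$, which is a genuine subsheaf of arbitrarily negative slope. Second, the Behrend weight on $\CM^L_{0,(0,d)}$ is the restriction of $\nu_{\CM_\CA}$, not the intrinsic Behrend function of $\Hilb^d(S)$, so the sign $(-1)^{2d}$ requires identifying the local structure of $\CM_\CA$ near these points (this is handled in \cite{T12}), not merely the smoothness of $\Hilb^d(S)$.
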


\begin{proof}
Equality \eqref{Linvariants} is proven in \cite[Prop 6.8]{T12},
hence it remains to show all
$L^{\mathrm{red}}_{n,(0,d)}$ vanish.
If $n < 0$ the Hilbert scheme
$\Hilb^n(X,(0,d))$ is empty.
If $n = 0$ we have
\[
\Hilb^0(X,(0,d)) \simeq \mathrm{Hilb}^d(S).
\]
which is invariant under the $E$-action, see
the proof of \cite[Prop 6.8]{T12}. Hence
$\DT^{\mathrm{red}}_{n, (0,d)} = 0$ for $n \leq 0$.
By Theorem~\ref{reduced_Toda_equation} we conclude
$L^{\mathrm{red}}_{n,(0,d)} = 0$ for all $n \leq 0$,
from which the result follows by Lemma~\ref{L_symmetry}.
\end{proof}

\subsection{Reduced N-invariants}
\label{Subsection_Generalized_DT}

The proof of the following result
is a modification of an argument by Toda, see \cite[Prop 6.7]{T12}.

\begin{prop} \label{Prop_N}
Let $n > 0$, $d \geq 0$ and $k = \mathrm{gcd}(n,d)$. Then
\[
N^{\mathrm{red}}_{n,(0,d)}
= \left(\frac{k}{n} \right)^2 N^{\mathrm{red}}_{k,0}
= 24 \cdot \frac{1}{n^2} \sum_{\ell|k}\ell^2 \,.
\]
\end{prop}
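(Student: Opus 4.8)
The plan is to compute $N^{\mathrm{red}}_{n,(0,d)}$ straight from the $\epsilon$-integration map, first settling the degree-zero case and then reducing the general class $(0,d)$ to it by a relative Fourier--Mukai transform along the quotient map $\pi\colon X\to S$, adapting the argument of \cite[Prop.~6.7]{T12}. For $d=0$, where $k=\gcd(n,0)=n$, the $t^{0}$-coefficient of \eqref{Linvariants} gives $\sum_{n}L_{n,0}q^{n}=1$, and $L^{\mathrm{red}}_{n,0}=0$ by Proposition~\ref{vanish2}; hence the $\beta=0$ specialization of Theorem~\ref{reduced_Toda_equation} reads $\DT^{\mathrm{red}}_{n,0}=(-1)^{n-1}n\,N^{\mathrm{red}}_{n,0}$ for $n>0$, and Corollary~\ref{DT_degree0} then gives $N^{\mathrm{red}}_{n,0}=\tfrac{24}{n^{2}}\sum_{\ell\mid n}\ell^{2}$. (Alternatively this drops out of Theorem~\ref{Thm_degree0_Euler_Characteristic} and \cite{BF} via $\log M(q)=\sum_{n}n^{-1}\bigl(\sum_{\ell\mid n}\ell^{2}\bigr)q^{n}$.)

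Now suppose $d>0$. By Proposition~\ref{N_inv} we have $N_{n,(0,d)}=0$, so by \eqref{Integration2}--\eqref{deffff} the number $N^{\mathrm{red}}_{n,(0,d)}$ is, up to the sign $(-1)^{\dim E}=-1$, the Behrend-weighted orbifold Euler characteristic of the $E$-quotient of the finite-stabilizer locus of the regular class $(\BL-1)\epsilon^{E}_{n,(0,d)}$. As $E$ acts freely on $X$, the map $\pi$ identifies $S=X/E$, every $\mu_{\CL}$-semistable sheaf of type $(0,d)$ is supported on fibers of $\pi$, and the induced $E$-action on its moduli is fiberwise translation. I would then apply the relative Fourier--Mukai autoequivalence $\Phi$ of $D^{b}\Coh(X)$ over $S$ with kernel the pulled-back Poincar\'e bundle of $E$: on numerical classes of sheaves supported on fibers $\Phi$ acts through the natural $SL_{2}(\BZ)$-action on the pair $(\text{multiplicity},\chi)$, and after composing it with a shift and a twist by a $\pi$-vertical line bundle one realizes the element $\gamma\in SL_{2}(\BZ)$ with $\gamma(d,n)=(0,k)$, $k=\gcd(n,d)$, so that $\Phi$ sends the $\mu_{\CL}$-semistable sheaves of type $(0,d)$ with $\chi=n$ to the zero-dimensional sheaves of length $k$. (After a suitable tilt of $\Coh(X)$, $\Phi$ restricts to an $S$-linear equivalence of abelian categories, as in \cite{T16}.) This yields an isomorphism of the relevant localized $E$-equivariant motivic Hall algebras carrying $\delta^{E}_{n,(0,d)}$ to $\delta^{E}_{k,0}$, hence $\epsilon^{E}_{n,(0,d)}$ to $\epsilon^{E}_{k,0}$, compatibly with $v_{n,(0,d)}\mapsto v_{k,0}$ and with the Behrend functions.

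The subtlety --- and the origin of the factor $(k/n)^{2}$ --- is that $\Phi$ does \emph{not} intertwine the naive $E$-actions: it exchanges translation by $a\in E$ with the composition of a translation and a twist by the degree-zero line bundle associated to $a$. Tracking this through the determinant on the primitive-slope moduli of sheaves on a single fiber, the $E$-action transported to the type-$(0,d)$ side covers multiplication by $n$ on the base $E$ of the ``sum of elliptic components'' fibration, whereas on the zero-dimensional side it covers multiplication by $k$. Since a multiplication-by-$c$ translation on such an elliptic fibration contributes a factor $\#E[c]^{-1}=c^{-2}$ to the orbifold Euler characteristic of the quotient --- compare the computation of $e(\Hilb^{n}(Y\times A)/A)$ behind Lemma~\ref{compatible_power} --- and $\Phi$ transports everything else isomorphically, applying $\CI$ to the two sides and dividing gives $N^{\mathrm{red}}_{n,(0,d)}=(k/n)^{2}N^{\mathrm{red}}_{k,0}$; with the degree-zero value this equals $\tfrac{24}{n^{2}}\sum_{\ell\mid k}\ell^{2}$.

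The main obstacle I anticipate is the rigor of this reduction: that the relative Fourier--Mukai transform truly descends to an isomorphism of the \emph{$E$-equivariant} motivic Hall algebras with the twisted $E$-action --- in particular its compatibility with the canonical decomposition \eqref{candecoco} and with the quotient integral \eqref{deffff} --- that the shift and the vertical twist are accounted for so that the $E$-action rescales by the factor $n/k$ rather than $d/k$, and that sheaves with non-reduced support along $S$ contribute nothing beyond what the transform already records.
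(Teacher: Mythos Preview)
Your approach is essentially the paper's. The $d=0$ case is handled identically. For $d>0$ the paper also reduces to degree zero via a fiberwise derived equivalence, but it is more concrete: in the coprime case it uses Atiyah's classification and the explicit determinant map $j_*\CE'\mapsto j_*\det(\CE')$ to exhibit $\CM_{n,(0,d)}\cong\CM_{1,(0,0)}$ directly, and for general $k$ it uses the derived equivalence whose kernel is the universal family on $M_{n_0,(0,d_0)}\times_S X$ --- which is exactly your relative Fourier--Mukai transform, realized with a specific kernel rather than built from the Poincar\'e bundle plus $SL_2(\BZ)$ moves --- to obtain $\CM_{n,(0,d)}\cong\CM_{k,(0,0)}$. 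The paper's tracking of the $E$-action is the one-line computation $\det(t_a^*\CE')=\det(\CE')\otimes\CO_{E_s}(-na)=t_{na}^*\det(\CE')$, which immediately shows the isomorphism is $E$-equivariant for $(n/k)$-times the natural action on the target and hence gives the factor $|E[n/k]|^{-1}=(k/n)^2$; this replaces your more indirect argument via the sum-of-supports fibration. The obstacles you flag are real but the paper avoids most of them by working directly with the moduli stacks rather than transporting Hall-algebra classes through~$\Phi$: once $\CM_{n,(0,d)}\cong\CM_{k,(0,0)}$ is established as stacks with the modified $E$-action, the equality of $(\BL-1)\epsilon^E$-classes and their images under $\CI$ is automatic.
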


\begin{proof}
By \eqref{Toda} we have
\[
\DT^{\mathrm{red}}_{n,0}= (-1)^{n-1} n N^{\mathrm{red}}_{n,0}.
\]
Hence the case $d=0$ follows from Corollary~\ref{DT_degree0}.

Assume $d > 0$,
and let $\CM_{n,(0,d)}$ be the moduli space of
$\CL$-semistable sheaves
of Chern character $(0,0, (0,d), n)$, see Section~\ref{N_invariants_def}.

If $k=\mathrm{gcd}(n,d)=1$ then every
semistable sheaf $\CE$ is stable
and hence
\[ \CE = j_{\ast} \CE' \]
for a stable sheaf $\CE'$ supported
on $j : E_s\hookrightarrow X$ for some $s \in S$,
where $E_s = s \times E$.
By the classification \cite{Ati, BBDG} we conclude
\begin{equation}
\label{isofff} \CM_{n,(0,d)} \cong \CM_{1, (0,0)}
\end{equation}
where, under the identification of $E$
with its dual $\Pic^0(E)$,
the isomorphism is given by taking the
determinant on each fiber $E_s$,
\[ \CE = j_{\ast} \CE' \mapsto j_{\ast} \det(\CE') \,. \]
It remains to compare the translation action by $E$
on both sides of \eqref{isofff}.
Let $t_a: X \to X,~~(s,e)\mapsto (s,e+a)$
be the translation by an element $a \in E$.
We have 
\[
\mathrm{det}(t_a^\ast \CE)
= \mathrm{det}(\CE) \otimes \CO_{E_s}(-na)
= t_{n a}^{\ast} \mathrm{det}(\CE)
\]
Hence the isomorphism \eqref{isofff}
is $E$-equivariant with respect to
$n$-times the natural translation action
on the right hand side.
Taking into account the stabilizers group $E[n]$
of $n$-torsion points of $E$, we conclude
\[
N^{\mathrm{red}}_{n,(0,d)}
= \frac{1}{n^2} \cdot N^{\mathrm{red}}_{1,(0,0)}
= \frac{24}{n^2}
\]
which verifies the proposition in case $k=1$.

Assume $m=km_0$ and $d=kd_0$ with $\mathrm{gcd}(m_0,d_0)=1$.
Then according to \cite{Ati, BBDG}
there is no stable object in $\CM_{n,(0,d)}$.
Every semistable sheaf $\CE$ in $\CM_{n,(0,d)}$
has exactly $k$ Jordan--H\"{o}lder(JH) factors,
and each factor determines a $\BC$-valued point
in $\mathcal{M}_{n_0,(0,d_0)}$.
The universal family on $M_{n_0,(0,d_0)}\times_S X$
induces a derived equivalence
\begin{equation} \label{derived_equivalence}
D^b\Coh(X) \xrightarrow{\simeq}D^b\Coh(X)
\end{equation}
sending $[\CE] \in \CM_{n_0,(0,d_0)}$
to a skyscraper sheaf $\BC_p$ for some $p\in X$.
Hence comparing Jordan--H\"older factors we obtain
the isomorphism
\begin{equation}\label{iso2}
\CM_{n,(0,d)} \xrightarrow{\simeq} \CM_{k,(0,0)}.
\end{equation}
Applying the same argument as in the case $k=1$
to each JH-factor,
the isomorphism $\eqref{iso2}$ is $E$-equivariant
with respect to the $n_0$ times the natural
translation on $\CM_{k,(0,0)}$.
Hence
\[
N^{\mathrm{red}}_{n,(0,d)}= \frac{1}{n_0^2} N^\mathrm{red}_{k,(0,0)}
\]
and the claim follows from case $d=0$.
\end{proof}

\subsection{Proof of Theorem \ref{K3xE_thm}}
By Theorem~\ref{reduced_Toda_equation}
the reduced DT invariants are completely determined
by the $L$, the reduced $L$, and the reduced $N$ invariants.
Hence Theorem~\ref{K3xE_thm} follows from
Propositions~\ref{Prop_N} and~\ref{vanish2}.
\qed

\subsection{Proof of Theorem~\ref{Thm_DTPT}}
The Hall algebra identity of \cite[Lem 3.16]{T16}
lifts to the $A$-equivariant Hall algebra.
Applying the $\epsilon$-integration map
shows that the difference between
the generating series of reduced DT and PT invariants is 
\[
\left(\sum_{n>0} (-1)^{n-1}nN^{\mathrm{red}}_{n,0} q^n \right)\cdot \left( \sum_{n,\beta}L_{n,\beta}q^nt^\beta \right).
\]
By definition, the ordinary L-invariant $L_{n,\beta}$
vanishes if $\beta$ is not of the form $(0,d)$
since there is no $E$-fixed point in the moduli space $\CM^L_{n,\beta}$. 
Hence if $\gamma \in H_2(S,\BZ)$ is non-zero then
\[
\DT^{\mathrm{red}}_{n,(\gamma,d)}
=
\PT^{\mathrm{red}}_{n,(\gamma,d)} \,.
\]
Finally, by Propositions \ref{Prop_N} and \ref{vanish2} we obtain
\[
\pushQED{\qed}
\DT^{\mathrm{red}}_{n,(0,d)} = \PT^{\mathrm{red}}_{n,(0,d)}+
24\left[\mathrm{log}\left(\prod_{n\geq 1}\left({1-(-q)^n} \right)^n \right)\prod_{m\geq 1}(1-t^m)^{-24} \right]_{q^nt^d}.
\qedhere
\popQED
\]

\section{Reduced DT invariants for abelian 3-folds}
\subsection{Overview}
Let $B$ be an non-singular simple principally polarized
abelian surface,
let $E$ be an elliptic curve and let
\[ A = B \times E \,. \]
Here we compute the $A$-reduced DT invariant of $X$ in class
\[ (0,d) \in H_2(A, \BZ) \,. \]
By deformation invariance \cite{Gul}
this yields Theorem~\ref{Thm1}.

Since $A$ is not simple
the equivariant Hall algebra methods of
Section~\ref{Section_Equivariant_motivic_Hall_algebras}
can not be applied directly
and need to be modified.
In particular we need to account
for more complicated stabilizer groups.
For $A = B \times E$ this leads to an integration map
which takes values in the ring
\[ \BQ[ \epsilon_1, \epsilon_2 ] / ( \epsilon_1^2 = \epsilon_2^2 = 0 ) \,. \]

\subsection{Equivariant Hall algebra}
\label{Subsection_BxE_hallalgebra}
The following Lemma asserts that
all stabilizer groups of $A$ can be controlled.
\begin{lemma} \label{lemma_subgroup} Every subgroup $G$ of $A$
is of the form
(a) $G = A$, (b) $G = B \times K$, (c) $G = K' \times E$,
or (d) $G = K''$ for finite groups $K,K', K''$.
\end{lemma}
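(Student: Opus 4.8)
The plan is to classify first the abelian subvarieties of $A = B\times E$ and then to bootstrap to arbitrary closed subgroups. Since we work over $\BC$, every closed subgroup $G\subseteq A$ is smooth, and being closed in the proper variety $A$ its identity component $G^0$ is a connected smooth proper group variety, hence an abelian subvariety of $A$. So it suffices to determine the possible $G^0$ and then to describe $G$ using that $G/G^0$ is finite.

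The one geometric input is that $\Hom(B,E)=\Hom(E,B)=0$: a nonzero homomorphism in either direction would exhibit a nonzero proper abelian subvariety of $B$ (as its image, resp. as the identity component of its kernel), contradicting that $B$ is simple of dimension $2$. First I would use this to show that the only abelian subvarieties $H\subseteq A$ are $0$, $B\times 0$, $0\times E$, and $A$. Writing $p_B,p_E$ for the two projections, consider $p_E|_H\colon H\to E$, with kernel $H\cap(B\times 0)$. If $p_E(H)=0$ then $H\subseteq B\times 0\cong B$, and simplicity of $B$ gives $H\in\{0,\,B\times 0\}$. If $p_E(H)=E$, inspect the identity component of $H\cap(B\times 0)$, an abelian subvariety of $B$: if it equals $B\times 0$ then $H\supseteq B\times 0$ and $H$ surjects onto $E$, so $\dim H=3$ and $H=A$; if it is trivial then $p_E|_H$ is an isogeny, so $\dim H=1$, whence $p_B(H)$ is an abelian subvariety of $B$ of dimension $<2$ and therefore $0$, forcing $H=0\times E$.

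It then remains to lift this classification from $G^0$ to $G$. If $G^0=0$ then $G$ is finite (case (d)); if $G^0=A$ then $G=A$ (case (a)). If $G^0=0\times E$, set $K':=p_B(G)$, a closed subgroup of $B$ with $(K')^0=p_B(G^0)=0$, hence finite; for any $(k,e)\in G$ the product $(k,e)\cdot(0,-e)=(k,0)$ lies in $G$ since $0\times E\subseteq G$, so $K'\times 0\subseteq G$ and thus $K'\times E\subseteq G\subseteq p_B(G)\times E=K'\times E$, giving $G=K'\times E$ (case (c)). The case $G^0=B\times 0$ is entirely symmetric and yields $G=B\times K$ with $K=p_E(G)$ finite (case (b)).

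The only genuinely delicate point is excluding a ``skew'' two-dimensional abelian subvariety surjecting onto both factors; this is exactly where $\dim B=2\neq 1$ and the simplicity of $B$ (equivalently $\Hom(B,E)=0$) enter, after which a dimension count closes the case. Every remaining step is routine bookkeeping with projections and with the translation trick used to reconstruct $G$ from $G^0$.
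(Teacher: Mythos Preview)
Your argument is correct and follows essentially the same route as the paper: reduce to the identity component $G^0$, classify the possible abelian subvarieties of $B\times E$ via the projections and the simplicity of $B$, and then recover $G$ from $G^0$ and the finite quotient $G/G^0$. The paper organizes the case split by $\dim G^\circ$ rather than by first enumerating all abelian subvarieties, and it leaves the passage from $G^\circ$ to $G$ implicit; your explicit ``translation trick'' showing $G=K'\times E$ (resp.\ $B\times K$) is a welcome clarification of a step the paper takes for granted.
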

\begin{proof}
Every subgroup $G \subset A$ has finitely many connected components
all of which have the same dimension.
Let $G^\circ$ be the connected component of $G$ containing the zero.

If $G^{\circ}$ has dimension $0$, the group $G$ is of type (d).

If $G^{\circ}$ is of dimension $1$,
then $B$ simple implies that the projection
$G^{\circ} \to B$ is constant. Hence $G$ is of type (c).

If $G^{\circ}$ is of dimension $2$,
consider the projection $\pi : G^{\circ} \to E$.
If $\pi$ is non-constant it is surjective and the kernel
is a $1$-dimensional subgroup of $B$; a contradiction.
Hence $\pi$ is constant and $G$ is of the form (b).

Finally, if $G^{\circ}$ is of dimension $3$, the group $G$ is of type (a).
\end{proof}

We define the relative and absolute $A$-equivariant
Grothendieck group of stacks
parallel to Section~\ref{Section_Equivariant_motivic_Hall_algebras}.
The Definition~\ref{defn_grothringofstacks}
is identical except for relation (c). The
possible cases (i) and (ii) of stabilizers groups
have to replaced with the cases (a), (b), (c), (d) of
Lemma~\ref{lemma_subgroup}.
This yields an $A$-equivariant
motivic Hall algebra $H^A(X)$ resp. $H^A(\CA)$ with the usual properties
and structures.

We define the reduced integration map $\CI$.
For an effective regular class $[Y \rightarrow \CM, a_Y]$
with $Y$ a variety, let
\begin{equation}\label{equation0}
[Y\rightarrow \CM, a_Y]=[Y^A]+[U_1]+[U_2]+[V]
\end{equation}
be the canonical decomposition,
such that every $\BC$-point of
$Y^A, U_1, U_2, V$ has a stabilizers of type
(a), (c), (b), (d) respectively,
and we have omitted the natural
$A$-actions and the morphism to $\CM$ in the notation.
Parallel to (\ref{Integration2})
we define the $\epsilon$-integration map to be the unique group homomorphism
\begin{equation}\label{equation1}
\CI: H^A_{\mathrm{sc}}(X) \rightarrow \bigoplus_{v\in \Gamma} \mathbb{Q}[\epsilon_1, \epsilon_2]/(\epsilon_1^2, \epsilon_2^2) \cdot c_v
\end{equation}
such that for every regular effective class $[Y \rightarrow \CM_{v}, a_Y]$
\[
\begin{aligned}
\CI([Y\xrightarrow{g} \CM, a_Y])&= \Big{(} \int_{Y^A} g^\ast \nu_\CM \dd{e} + \Big{(} \int_{U_1/B} g^\ast \nu_\CM \dd{e}\Big{)}\epsilon_1 \\ &- \Big{(} \int_{U_2/E} g^\ast \nu_\CM \dd{e}\Big{)}\epsilon_2- \Big{(} \int_{V/A} g^\ast \nu_\CM \dd{e}\Big{)}\epsilon_1\epsilon_2 \Big{)}\cdot c_v \,.
\end{aligned}
\]
By the same argument as for Theorem \ref{epsilon_integration},
the $\epsilon$-integration map (\ref{equation1})
is a homomorphism of Poisson algebras. 

\subsection{Proof of Theorem~\ref{Thm1}}
\label{Subsection_ProofofThemAbelian}
Let $\epsilon^A_{n,\beta}$ be the class
in the $A$-equivariant motivic Hall algebra
defined in Section~\ref{N_invariants_def}.
Define generalized Donaldson--Thomas invariants
by the reduced integration map of Section~\ref{Subsection_BxE_hallalgebra},
\[
\CI\big((\BL-1)\epsilon^A_{n,\beta}\big)
= -\mathsf{N}^{\bullet}_{n, \beta} \cdot c_{v_{n,\beta}}.
\]
Parallel to Proposition~\ref{N_inv}
only $A$-reduced invariants are non-zero. We write
\[ \mathsf{N}^{\bullet}_{n,(0,d)} =
 N_{n,(0,d)}^{A\text{-red}}\epsilon_1 \epsilon_2 \,. \]
Define equivariant
$L$-invariants by
$\CI\big( (\BL-1) L^{A}_{n,\beta} )
=
-\mathsf{L}^{\bullet}_{n,\beta} c_{n,\beta}$
and let
\[
\DT^{\bullet}_{n,\beta}
=
\DT_{n, \beta}
+ \DT^{B\text{-red}}_{n, \beta} \epsilon_1
+ \DT^{E\text{-red}}_{n, \beta} \epsilon_2
+ \DT^{A\text{-red}}_{n, \beta} \epsilon_1 \epsilon_2 \,.
\]
As in \eqref{wcformula} an application of the
reduced integration map yields the wall-crossing
formula
\begin{equation} \label{wcformula22}
\sum_{n, \beta} \DT^{\bullet}_{n,\beta} q^n t^{\beta}
=
\exp\left(\sum_{n>0,\beta}
(-1)^{n-1}n \mathsf{N}^{\bullet}_{n,\beta} q^n t^\beta \right)
\cdot
\left(\sum_{n,\beta} \mathsf{L}^{\bullet}_{n,\beta} q^n t^\beta \right).
\end{equation}
We have
\[
\begin{cases}
\DT_{n,\beta} = 0 & \text{ if } (n,\beta) \neq 0 \\
\DT_{n,\beta}^{E\text{-red}} = 0 & \text{ for all } n,\beta \\
\DT_{n,\beta}^{B\text{-red}} = 0
& \text{ unless } \beta = (0,d), d > 0, n = 0 \\
\DT_{0,(0,d)}^{B\text{-red}}
= e(\Hilb^d(B)/B) & \text{ if } d>0,
\end{cases}
\]
which yields
\[ \sum_{n \in \BZ} L^{\bullet}_{n,(0,d)}q^n
=
\begin{cases}
e(\Hilb^d(B)/B) \epsilon_1  & \text{ if } d > 0 \\
1 & \text{ if } d = 0.
\end{cases}
\]
Picking out the $q^n t^{(0,d)}$ coefficient
in \eqref{wcformula22} hence yields
\begin{equation} \label{413413}
\DT_{n,(0,d)}^{A\text{-red}}
=
(-1)^{n-1} n {N}^{A\text{-red}}_{n, (0,d)} \,.
\end{equation}

\begin{prop}\label{Prop_N_A}
Let $n > 0$, $d \geq 0$ and $k = \mathrm{gcd}(n,d)$. Then
\[
N^{A\textup{-red}}_{n,(0,d)}
= \left(\frac{k}{n} \right)^2 N^{A\textup{-red}}_{k,0}
= \frac{1}{n^2} \sum_{\ell|k}\ell^2 \,.
\]
\end{prop}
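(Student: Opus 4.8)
\emph{Strategy.} The plan is to follow the proof of Proposition~\ref{Prop_N} almost verbatim, replacing the K3 surface $S$ by the abelian surface $B$, the elliptic fibration $S\times E\to S$ by $X=A\to B$, and the $E$-reduced integration map by the $\BQ[\epsilon_{1},\epsilon_{2}]$-valued map of Section~\ref{Subsection_BxE_hallalgebra}. I would first settle the case $d=0$: by the wall-crossing identity \eqref{413413} one has $\DT_{k,(0,0)}^{A\text{-red}}=(-1)^{k-1}k\,N^{A\text{-red}}_{k,0}$, and Corollary~\ref{DT_degree0} identifies the left-hand side with $\tfrac{(-1)^{k-1}}{k}\sum_{\ell|k}\ell^{2}$; hence $N^{A\text{-red}}_{k,0}=\tfrac{1}{k^{2}}\sum_{\ell|k}\ell^{2}$, which in particular yields $\big(\tfrac{k}{n}\big)^{2}N^{A\text{-red}}_{k,0}=\tfrac{1}{n^{2}}\sum_{\ell|k}\ell^{2}$. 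It then remains to prove the first equality of the proposition for $d>0$.

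\emph{The isomorphism of moduli stacks.} For $d>0$ write $n=kn_{0}$, $d=kd_{0}$ with $\gcd(n_{0},d_{0})=1$ (so $n_{0}=n$, $d_{0}=d$ when $k=1$). Every $\mu_{\CL}$-semistable sheaf of Chern character $(0,0,(0,d),n)$ is supported on a fibre $E_{b}=\{b\}\times E$ of $X\to B$, and the argument of Proposition~\ref{Prop_N} — which uses only that the base $B$ is a smooth surface carrying the elliptic fibration, not that it is a K3 surface — together with the classification of stable sheaves on elliptic curves \cite{Ati,BBDG} yields an isomorphism of moduli stacks
\[
\CM_{n,(0,d)}\;\simeq\;\CM_{k,(0,0)},
\]
given by the fibrewise determinant $j_{*}\CE'\mapsto j_{*}\det(\CE')$ when $k=1$, and for $k>1$ by the relative Fourier--Mukai equivalence attached to $M_{n_{0},(0,d_{0})}\to B$ followed by the passage to Jordan--H\"older factors. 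The genuinely new point is to track the full $A=B\times E$ action under this isomorphism. The $E$-part behaves as in Proposition~\ref{Prop_N}: on a Jordan--H\"older factor of fibrewise degree $n_{0}$ the $E$-translation acts on the determinant class as $n_{0}$ times the natural translation, so it becomes $n_{0}$ times the natural $E$-translation on $\CM_{k,(0,0)}$. The $B$-part becomes the \emph{natural} $B$-translation, because $t_{(b_{0},0)}$ identifies $E_{b}$ with $E_{b\pm b_{0}}$ via the identity on the $E$-coordinate, so the relative moduli spaces, universal families and Fourier--Mukai kernels over $B$ are $B$-translation-equivariant. Transporting the regular Hall-algebra element $(\BL-1)\epsilon^{A}_{n,(0,d)}$ of Theorem~\ref{NoPoleTheorem} along this isomorphism identifies $N^{A\text{-red}}_{n,(0,d)}$ with the reduced invariant $N^{\mathrm{tw}}$ computed from $\CM_{k,(0,0)}$ equipped with the ``twisted'' action $a^{\mathrm{tw}}$ that is natural on the $B$-factor and $n_{0}$ times natural on the $E$-factor.

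\emph{Euler-characteristic bookkeeping, and the hard point.} It remains to compare $N^{\mathrm{tw}}$ with $N^{A\text{-red}}_{k,0}$. Since a $0$-dimensional sheaf on $A$ is never fixed by a positive-dimensional subgroup, every regular effective class over $\CM_{k,(0,0)}$ is supported on the type-(d) stratum of Lemma~\ref{lemma_subgroup} for both $a^{\mathrm{tw}}$ and the natural action, so in each case the reduced invariant is the $\epsilon_{1}\epsilon_{2}$-coefficient of the integration map built from the quotient by the corresponding $A$-action. The center-of-mass morphism $\mathrm{cm}\colon\CM_{k,(0,0)}\to A$, $\CE\mapsto\sum_{p}\operatorname{length}_{p}(\CE)\cdot p$, is $A$-equivariant for the isogeny of $A$ equal to multiplication by $k$ under the natural action and to $(b_{0},e_{0})\mapsto(kb_{0},ne_{0})$ under $a^{\mathrm{tw}}$; as $A$ acts transitively on the target, any $A$-equivariant variety $Y\to\CM_{k,(0,0)}$ has $Y/A\cong\mathrm{cm}^{-1}(0)/A[k]$ for the natural action and $Y/A\cong\mathrm{cm}^{-1}(0)/(B[k]\times E[n])$ for $a^{\mathrm{tw}}$, over the same fibre and with the same Behrend function. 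With $|A[k]|=k^{6}$, $|B[k]|=k^{4}$, $|E[n]|=n^{2}$, the two Behrend-weighted orbifold Euler characteristics therefore differ by the uniform factor $k^{6}/(k^{4}n^{2})=k^{2}/n^{2}$, whence $N^{A\text{-red}}_{n,(0,d)}=N^{\mathrm{tw}}=\tfrac{k^{2}}{n^{2}}N^{A\text{-red}}_{k,0}$; combined with the case $d=0$ this is $\tfrac{1}{n^{2}}\sum_{\ell|k}\ell^{2}=\big(\tfrac{k}{n}\big)^{2}N^{A\text{-red}}_{k,0}$, as asserted. I expect the main obstacle to be the middle step: making the isomorphism $\CM_{n,(0,d)}\simeq\CM_{k,(0,0)}$ genuinely $B\times E$-equivariant and upgrading it to an identity of motivic Hall-algebra elements, so that the \emph{reduced} $N$-invariants — not merely the underlying Euler characteristics — are matched; this forces one to run the $A$-equivariant version of Joyce's no-pole argument used in Theorem~\ref{NoPoleTheorem} together with the strategy of \cite{T12}, whereas the center-of-mass count and the reduction to $d=0$ are routine.
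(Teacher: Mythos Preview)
Your proposal is correct and follows essentially the same route as the paper's proof: reduce the first equality to Proposition~\ref{Prop_N} by noting that the isomorphisms \eqref{isofff} and \eqref{iso2} are compatible with $B$-translations (so only the $E$-factor picks up the $n_{0}$-multiple and the associated $1/n_{0}^{2}=(k/n)^{2}$ factor), and then settle the second equality via \eqref{413413} and Corollary~\ref{DT_degree0}. Your center-of-mass bookkeeping is a more explicit repackaging of the stabilizer count that the paper leaves implicit, and the caveat you flag about upgrading the moduli isomorphism to a Hall-algebra identity is exactly what the paper absorbs into the phrase ``the proof of Proposition~\ref{Prop_N} also shows''.
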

\begin{proof}
Since the isomorphisms
\eqref{isofff} and \eqref{iso2}
are compatible with $B$-translations
the proof of Proposition \ref{Prop_N}
also shows
\[
N^{A\textup{-red}}_{n,(0,d)}
= \left(\frac{k}{n} \right)^2 N^{A\textup{-red}}_{k,0} \,.
\]
Hence the claim follows from
the $d=0$ case of \eqref{413413}
and Corollary~\ref{DT_degree0}.
\end{proof}

\begin{proof}[Proof of Theorem \ref{Thm1}]
By \eqref{413413} and Proposition~\ref{Prop_N_A}.
\end{proof}

\subsection{DT/PT correspondence}
\label{Subsection_Abelianthreefold_DTPT}
Finally we prove the DT/PT correspondence for abelian 3-folds.
If at least two of the $d_i$ are positive
in the curve class $\beta=(d_1,d_2,d_3)$
the $A$-translation on the Chow variety $\mathrm{Chow}(A,\beta)$
has no fixed point.
By a comparision of local contribution as in \cite{O1}
it follows
\[
\DT^{A\textup{-red}}_{n,\beta} = \PT^{A\textup{-red}}_{n,\beta} \,.
\]
The following theorem extends this statement to all classes.

\begin{thm}
When $d>0$ and $n>0$, we have
\[
\DT^{A\textup{-red}}_{n,(0,0,d)}=  \PT^{A\textup{-red}}_{n,(0,0,d)}.
\]
\end{thm}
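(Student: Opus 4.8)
The plan is to follow the proof of Theorem~\ref{Thm_DTPT}, now working inside the $(B\times E)$-equivariant motivic Hall algebra and with the refined integration map $\CI$ of \eqref{equation1}. The Hall algebra identity of \cite[Lem.~3.16]{T16} comparing the DT and PT generating series lifts to the $A$-equivariant Hall algebra $H^A(X)$ by the same argument used to lift \cite[Thm.~4.8]{T16} in the proof of Theorem~\ref{reduced_Toda_equation}: all the intervening stratifications can be carried out along $A$-orbits, using the stabilizer analysis of Section~\ref{Subsection_BxE_hallalgebra} based on Lemma~\ref{lemma_subgroup}. Applying $\CI$ to the lifted identity produces, in the ring of $\BQ[\epsilon_1,\epsilon_2]/(\epsilon_1^2,\epsilon_2^2)$-valued generating series, a wall-crossing formula
\[
\sum_{n,\beta}\DT^{\bullet}_{n,\beta}\,q^n t^\beta
\;=\;
\Big(\sum_{n\geq 0}\DT^{\bullet}_{n,0}\,q^n\Big)\cdot\sum_{n,\beta}\PT^{\bullet}_{n,\beta}\,q^n t^\beta\,,
\]
expressing the reduced DT series as the reduced PT series multiplied by the reduced degree-$0$ series, exactly as in the non-equivariant picture.

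The next step is to identify the two factors on the right. A nonzero $0$-dimensional sheaf on $A$ cannot be fixed by $A$, by $B$, or by $E$, since its support would then be a nonempty union of translates of one of these positive-dimensional subgroups; hence, exactly as in Propositions~\ref{N_inv} and~\ref{Prop_N_A}, all components of $\DT^{\bullet}_{n,0}$ vanish for $n>0$ except the $\epsilon_1\epsilon_2$-part, so that
\[
\sum_{n\geq 0}\DT^{\bullet}_{n,0}\,q^n \;=\; 1 + \epsilon_1\epsilon_2\sum_{n>0}\DT^{A\text{-red}}_{n,0}\,q^n\,.
\]
Likewise, the translation action of $A$ on $P_n(A,\beta)$ has no fixed points whenever $(n,\beta)\neq 0$, because no $1$-dimensional subscheme of $A$ is $A$-invariant; therefore all non-reduced PT invariants of $A$ vanish except $\PT_{0,0}=1$, and the $\epsilon_1^0\epsilon_2^0$-component of $\sum_{n,\beta}\PT^{\bullet}_{n,\beta}q^n t^\beta$ is $1$.

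Comparing the coefficients of $\epsilon_1\epsilon_2$ on both sides of the wall-crossing formula now yields
\[
\sum_{n,\beta}\DT^{A\text{-red}}_{n,\beta}\,q^n t^\beta
\;=\;
\sum_{n,\beta}\PT^{A\text{-red}}_{n,\beta}\,q^n t^\beta
\;+\;\sum_{n>0}\DT^{A\text{-red}}_{n,0}\,q^n\,,
\]
so the discrepancy between the reduced DT and PT series is supported entirely in curve class $0$. Extracting the coefficient of $q^n t^{(0,0,d)}$ with $d>0$ kills the correction term, giving $\DT^{A\text{-red}}_{n,(0,0,d)}=\PT^{A\text{-red}}_{n,(0,0,d)}$, as desired. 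The only step demanding genuine care is the first, namely checking that the DT/PT Hall algebra identity and the stratifications entering its proof remain valid after imposing $(B\times E)$-equivariance with the four stabilizer types of Lemma~\ref{lemma_subgroup}; but this proceeds exactly as for the constructions already carried out for $H^A(X)$ in Sections~\ref{Section_Equivariant_motivic_Hall_algebras} and~\ref{Subsection_BxE_hallalgebra}, so no new difficulty arises.
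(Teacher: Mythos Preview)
Your proof is correct and follows essentially the same route as the paper: lift the Hall algebra DT/PT identity of \cite[Lem.~3.16]{T16} to the $(B\times E)$-equivariant setting, apply the $\epsilon$-integration map \eqref{equation1}, and extract the $\epsilon_1\epsilon_2$-coefficient using that the degree-$0$ factor lies in $1+\epsilon_1\epsilon_2\cdot(\cdots)$ while the constant part of the PT series is~$1$. The only cosmetic difference is that you write the degree-$0$ factor as $\sum_{n\geq 0}\DT^{\bullet}_{n,0}q^n$, whereas the paper writes it as $\exp\big(\sum_{n>0}(-1)^{n-1}n\,\mathsf{N}^{\bullet}_{n,0}q^n\big)$; these agree by the $\beta=0$ case of the wall-crossing and the vanishing $\PT^{\bullet}_{n,0}=0$ for $n\neq 0$.
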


\begin{proof}
By deformation invariance we may work
with the product $A=B\times E$.
Applying the $\epsilon$-integration map
of Section~\ref{Subsection_BxE_hallalgebra}
to the $A$-equivariant version of the Hall algebra identity of \cite[Lem 3.16]{T16} yields
\begin{equation*}\label{abelian_DTPT}
\sum_{n,d}\DT^\bullet_{n,(0,d)}q^nt^d= \exp\left(\sum_{n>0}
(-1)^{n-1}n \mathsf{N}^{\bullet}_{n,0} q^n t^d \right)
\cdot
\left(\sum_{n,d} \mathsf{PT}^{\bullet}_{n,(0,d)} q^n t^d \right).
\end{equation*}
where the invariants
$\DT^\bullet_{n,(0,d)}$ and $\mathsf{N}^{\bullet}_{n,0}$
are defined in Section~\ref{Subsection_ProofofThemAbelian}
and
\[
\PT^{\bullet}_{n,(0,d)}
=
\PT_{n, (0,d)}
+ \PT^{B\text{-red}}_{n, (0,d)} \epsilon_1
+ \PT^{E\text{-red}}_{n, (0,d)} \epsilon_2
+ \PT^{A\text{-red}}_{n, (0,d)} \epsilon_1 \epsilon_2 \,.
\]
By expansion using $\epsilon_1^2= \epsilon_2^2=0$ and
\[ \PT_{n,(0,d)}=0 \]
for any $(n,d) \neq 0$, the theorem is deduced.
\end{proof}

\end{document}